\documentclass[pdflatex]{sn-jnl}%
\usepackage{amssymb}
\usepackage{xcolor}
\usepackage{tikz}
\usepackage{graphicx}%
\usepackage{multirow}%
\usepackage{amsmath,amssymb,amsfonts}%
\usepackage{amsthm}%
\usepackage{mathrsfs}%
\usepackage[title]{appendix}%
\usepackage{xcolor}%
\usepackage{textcomp}%
\usepackage{manyfoot}%
\usepackage{booktabs}%
\usepackage{algorithm}%
\usepackage{algorithmicx}%
\usepackage{algpseudocode}%
\usepackage{listings}%

\theoremstyle{thmstyleone}%
\newtheorem{theorem}{Theorem}
\newtheorem{lemma}[theorem]{Lemma}
\newtheorem{corollary}[theorem]{Corollary}

\theoremstyle{thmstyletwo}%
\newtheorem{remark}{Remark}%

\theoremstyle{thmstylethree}%
\newtheorem{definition}{Definition}%

\DeclareMathOperator{\loc}{loc}
\DeclareRobustCommand\full  {\tikz[baseline=-0.6ex]\draw[thick] (0,0)--(0.5,0);}
\DeclareRobustCommand\dotted{\tikz[baseline=-0.6ex]\draw[thick,dotted] (0,0)--(0.54,0);}
\DeclareRobustCommand\dashed{\tikz[baseline=-0.6ex]\draw[thick,dashed] (0,0)--(0.54,0);}

\DeclareRobustCommand\oline{%
\tikz[baseline=-0.6ex]{
  \draw[thick] (0,0)--(0.5,0);
  \filldraw (0.25,0) circle (1.5pt);
}}
\DeclareRobustCommand\chainn{\tikz[baseline=-0.6ex]\draw[thick,dash dot] (0,0)--(0.5,0);}
\usepackage{bbm}
\usepackage[utf8]{inputenc}  
\usepackage{amssymb,amsmath}
\usepackage{enumerate}
\usepackage[shortlabels]{enumitem}
\usepackage{mathtools}
\usepackage{graphicx}
\usepackage{url}
\usepackage{color}  
\definecolor{darkred}{rgb}{0.6,0,0}
\definecolor{darkgreen}{rgb}{0,0.5,0}
\definecolor{darkmagenta}{rgb}{0.5,0,0.5}
  
\usepackage[capitalize,nameinlink]{cleveref}

\makeatletter
\def\@cite#1#2{\textup{[{#1\if@tempswa , #2\fi}]}}
\makeatother

\DeclarePairedDelimiter\abs{\lvert}{\rvert}

   \DeclarePairedDelimiterX\Set[1]\{\}{%
      
      #1
   }

\newcommand{\R}{\mathbb R}

\newcommand{\dott}{\, \cdot\,}

 \usepackage[sort]{cite}
\usepackage{bigints}
\newcommand{\Z}{\mathbb{Z}}
\DeclareMathOperator{\lip}{Lip}
\DeclareMathOperator{\bv}{BV}
\newcommand{\sgn}{\mathop\mathrm{sgn}}
\newcommand{\norma}[1]{{\left\|#1\right\|}}

\renewcommand{\d}[1]{\mathinner{\mathrm{d}{#1}}}
\DeclareMathOperator{\TV}{TV}

\newcommand{\D}{\Delta}
\newcommand{\modulo}[1]{{\left|#1\right|}}

\numberwithin{equation}{section}     
\allowdisplaybreaks

\begin{document}
\title[]{Systems of Nonlocal Conservation Laws with Memory and Their Zero Retention Limit}
\author*[1]{\fnm{Aekta} \sur{Aggarwal}}\email{aektaaggarwal@iimidr.ac.in}

\author[2]{\fnm{Ganesh} \sur{Vaidya}}\email{vaidyaganesh@iisc.ac.in}

\affil*[1]{\orgdiv{Operations Management and Quantitative Techniques}, 
\orgname{Indian Institute of Management Indore}, 
\orgaddress{\street{Prabandh Shikhar, Rau--Pithampur Road}, 
\city{Indore}, \postcode{453556}, \state{Madhya Pradesh}, \country{India}}}

\affil[2]{\orgdiv{Department of Mathematics}, 
\orgname{Indian Institute of Science}, 
\orgaddress{\city{Bangalore}, \postcode{560012},  \state{Karnataka}, \country{India}}}
\date{\today}


 \abstract{
We study the entropy solution for a class of systems of nonlocal conservation laws in which the convective flux is convoluted with a kernel in both spatial and temporal variables. This formulation models the flux dependence on the solution within its spatial neighborhood (nonlocal in space) as well as on prior states in time (nonlocal in time), thereby incorporating memory effects. In addition, employing a convergent finite volume approximation, the existence of the entropy solution is discussed. The uniqueness
of such entropy solutions is also established.

In addition, we analyze the asymptotic behavior of the solutions as the
support of the temporal convolution kernel shrinks, demonstrating the “memory-
to-memoryless” effect and convergence to the entropy solution of the corresponding nonlocal conservation law without memory (i.e., nonlocal only in
space). Convergence rate estimates are derived. In addition, the proposed numerical approximations are shown to be asymptotically compatible with this
passage to the memoryless limit by deriving the corresponding asymptotic
convergence rate estimates. The analysis is carried out in a very general set-
ting, without imposing any geometric restrictions such as the convexity of the
spatial and temporal convolution kernels, unlike the existing literature on the
asymptotic analysis of nonlocal-in-space only conservation laws. To the best
of our knowledge, this provides the first convergence and asymptotic analysis
for finite volume schemes applied to nonlocal conservation laws with memory. Numerical experiments are included to illustrate the theory.
}

\pacs[MSC Classification]{35L65,65M25, 35D30,  65M12, 65M15}

\keywords{nonlocal conservation laws, traffic flow,convergence rate, hyperbolic systems, adapted entropy}

\maketitle



\section{Introduction}
Many real-world phenomena exhibit dynamics in which the state at a given time depends not only on the present configuration but also on the past history of the system. In such settings, the notion of “memory” 
captures the influence of previous states on the current dynamics. Such hereditary structures appear in industrial applications such as damping phenomena in elastic media~\cite{CHR2007,c2008}, viscoelasticity~\cite{Dafermos1970}, models involving fractional-in-time behavior \cite{Podlubny1999},  gas transport models in coal seams involving adsorption and diffusion in meso and micropores~\cite{Clarkson1999,Shi2003}, and transport models in subsurface and hyporheic zones~\cite{Haggerty1995,Gooseff2003,Haggerty2002}. A prototypical scalar conservation law modeling such events,  reads as follows:
\begin{equation}\label{eq:memory_general}
\partial_t u
+ \partial_x \Big( \, f(u,\int_0^t u(\tau,x)\Gamma(t-\tau))\, d\tau \Big)
=0, \quad (t,x) \in Q_T:= (0,T)\times\R,
\end{equation}
where $\Gamma$ is a temporal memory kernel.
Despite their applications in modeling a wide range of phenomena, the well-posedness theory for such models remains largely incomplete. One key difficulty is that the temporal convolution term often lacks spatial smoothness, rendering the flux discontinuous as a function of the spatial variable. The presence of temporal convolution destroys the semigroup property in the usual sense and gives rise to a new mathematical object, unlike those arising in local or purely spatially nonlocal conservation laws. 
Consequently, classical methods for standard conservation laws do not directly apply. Some studies ~\cite{c2008,N2023, DHSS2023, P2014,liu2020, Dafermos1970, D1987, CC2007,DAF2012,CHR2007} deal with some specific cases  exploiting structural properties of particular PDEs to study the wellposedness, under some specialized assumptions on the flux. 

However, the case of ``nonlocal space only" conservation laws, \begin{align}
    \label{nls1}
  \partial_t u +\partial_x \Big(f(u,\int_{\R} u(t,\xi)\mu(x-\xi))\, d\xi \Big)&=0, \quad (t,x) \in Q_T,
\end{align} with $\mu$ being a spatial horizon kernel, has gained significant interest in the recent decade,  due to two reasons: their widespread use in  modeling of applications where the flux at a given point may depend 
not only on the local state but also on averaged quantities over a finite 
interaction horizon, for example,  crowds \cite{CGL2012,ACG2015}, traffic \cite {BG2016,BHL2023,FGKP2022,AHV2023_1,AHV2024}, sedimentation
models \cite{BBKT2011}, {laser technology \cite{CM2015}},  granular material dynamics \cite{AS2012}, conveyor belt dynamics \cite{GHS+2014},  opinion formation~\cite{ANT2007}, sedimentation
models~\cite{BBKT2011} and  granular material dynamics~\cite{AS2012} etc, and their analytical/mathematical vicinity to local nonlinear conservation laws. Its well-posedness has been well studied in literature, a non-exhaustive list being \cite{CK24,FGR2021,BFK2022,AV2023,FCV2023,KP2021,CG2019, KLS2018,ANT2007,AS2012,BHL2023, ACT2015, AG2016,BG2016,FGKP2022,CGL2012,AHV2023_1,AHV2024,AHV2023, ACG2015,BBKT2011,GHS+2014,CG2023,CM2015} etc. 

In contrast, the present article combines these two frameworks by considering convolution kernels that depend on both spatial and temporal variables. 
In particular, we study the initial-value problem (IVP) for the coupled system of $N$ nonlocal hyperbolic conservation laws of the above type, where for every $k\in \mathcal{N}:=\{1, \ldots, N\}$,  the $k^{\rm th}$ equation is given by
\begin{align}
    \label{nlm}
  \partial_t U^{k} +\partial_x \Big(f^k(U^k) \nu^k((\boldsymbol{\Theta} \circledast  \boldsymbol{U})^k)\Big) &=0, \quad (t,x) \in Q_T,\\
  \label{init}
  U^{k}(0,x)&=U_0^{k}(x), \quad x \in \R.
 \end{align} Here, $T$ is the final time and the unknown is $\boldsymbol{U}=(U^{k})_{k\in\mathcal{N}}:[0,\infty)\times\mathbb{R}\to\mathbb{R}^N.$ 
Further, for every $j,k\in \mathcal{N}$, ${U}^k_0\in (L^1 \cap \bv) (\R;[0,1]), \Theta^{j,k}(t,x):=\mu^{j,k}(x) \Gamma^{j,k}(t)$, with $\boldsymbol{\mu}$ and $\boldsymbol{\Gamma}$ being smooth $N\times N$ matrices, and $(\boldsymbol{\Theta}\circledast \boldsymbol{U})^k:= (\Theta^{i,k}*U^i)_{i\in\mathcal{N}},$ where for every $(t,x)\in \overline{Q}_T$, \begin{align}\label{mc}
\begin{split}
   (\Theta^{i,k}*U^i)(t,x)&:=\displaystyle\int_0^t\int_{\R} U^i(\tau,\xi)\mu^{j,k}(x-\xi)\Gamma^{i,k}(t-\tau) \d \tau \d \xi, i\in\mathcal{N}.
\end{split}
\end{align} Additionally,
\begin{enumerate}[(\textbf{H\arabic*})]
\item \label{H1A}$f^k \in  \lip(\R)$  with $ f^k(0)=0=f^k(1)$.
 \item \label{H2A}$\nu^k \in (C^2 \cap \bv \cap \, W^{2,\infty}) (\R^N,\R)$;
 \item \label{H3A}
 $\Theta^{j,k}(t,x):=\mu^{j,k}(x) \Gamma^{j,k}(t)$, with $\boldsymbol{\mu}$ and $\boldsymbol{\Gamma}$ being smooth $N\times N$ matrices. The space kernel ${\mu}^{j,k} \in C^2(\R) \cap W^{2,\infty}(\R)$. The time kernel $\Gamma^{j,k}\in C^2([0,\infty);\R^+) \cap W^{2,\infty}([0,\infty);\R^+).$ 
 \end{enumerate} Modeling-wise, it is to be noted that the system \eqref{nlm}--\eqref{init} is coupled through the nonlocal operator
$
\boldsymbol{\Theta}\circledast\boldsymbol{U}$,
which introduces both \emph{spatial averaging} and \emph{temporal memory effects} into the convective flux of \eqref{nlm}.
More precisely, for every $k\in\mathcal{N}$, the flux at a spacetime point $(t_0,x_0)$
depends not only on the instantaneous density $U^k(t_0,x_0)$, but also on
\begin{enumerate}[(i)]
\item a weighted average of the states $U^j(t_0,\cdot)$ in a neighborhood of $x_0$,
determined by the spatial kernel $\mu^{j,k},j,k,\in\mathcal{N}$;
\item a cumulative influence of past states $U^j(\tau,x_0)$, $\tau<t_0$,
modulated by the time kernel $\Gamma^{j,k},j,k,\in\mathcal{N}$.
\end{enumerate}
Consequently, the characteristic speed of the $k^{\mathrm{th}}$ equation at $(t_0,x_0)$
reflects a history-dependent interaction between all components of the system,
rather than being determined solely by local instantaneous data. In practical applications,  $\Gamma^{j,k}$ is typically taken as a decreasing function of time, depicting fading memory and assigning smaller weights to older information. However, the results of this article remain valid for a broader class of $\Gamma^{j,k}$ satisfying \ref{H3A}.
 
 We will show that although a general well-posedness theory for time-nonlocal conservation laws remains largely undeveloped, the present framework enables us to systematically adapt techniques from the theory of purely spatial nonlocal conservation laws to establish well-posedness for a substantially broader class of flux functions incorporating both spatial and temporal nonlocal effects, invoking the  regularity of the convective flux with respect to the space variable $x$.
{
Using relaxation schemes, \eqref{nlm}-\eqref{init} has been recently studied for $N=1$ in \cite{DHSS2023} for a specific choice of $\Theta$ and linear $f$}.
That said, the question of existence as well as uniqueness of the entropy solutions for the system  \eqref{nlm}-\eqref{init} with a general $\boldsymbol{\Theta}$ having temporal dependence and(or) for $N>1$, remains unexplored and unsettled as of now, which is dealt with in the first part of this article. {\color{black}Several further interesting questions} regarding the entropy solution of \eqref{nlm}-\eqref{init} are: 
\begin{enumerate}[(\textbf{Q\arabic*})]
\item \label{a} Does it converge to the entropy solution of its ``nonlocal-space'' counterpart \begin{align}
    \label{nls}
  \partial_t U^{k} +\partial_x \Big(f^k(U^k) \nu^k((\boldsymbol{\mu} \circledast  \boldsymbol{U})^k)\Big) &=0,\quad (t,x) \in Q_T,\\
\label{sc}
(\mu^{j,k}*U^j)(t,x)&:=\displaystyle\int_{\R} U^j(t,\xi)\mu^{j,k}(x-\xi)  \d \xi,
\end{align}
as the radius of the temporal convolution kernel $\boldsymbol{\Gamma}$ tends to zero?  \item \label{b} Does it converge to the entropy solution of its ``nonlocal-time'' counterpart 
as the radius of the spatial convolution kernel $\boldsymbol{\mu}$ tends to zero?  
\item \label{c} Does it converge to the entropy solution of its local counterpart
\begin{align}\label{local}
\partial_t U^{k} + \partial_x \Big(f^k(U^k)\, \nu^k(\boldsymbol{U})\Big) = 0, \quad (t,x) \in Q_T,
\end{align}
as the radii of the temporal and spatial convolution kernels $\boldsymbol{\Gamma}$ and $\boldsymbol{\mu}$ tend to zero? 
\item \label{d} Do there exist finite volume approximations for \eqref{nlm}–\eqref{init} that are asymptotically compatible with any of the limiting passages described above? If so, can one quantify the corresponding rates of this asymptotic convergence?
\end{enumerate}
In this article, we would obtain uniform spatial $\bv$ bounds on $\boldsymbol{U}$ independent of $\|\Gamma^{j,k}\|_{L^\infty(\R)}$ in \S\ref{num}, using which, we would answer \ref{a} and corresponding \ref{d} affirmatively, thereby confirming the passage from the ``memory'' model to its ``memoryless'' counterpart in \S\ref{NLL}. 

The passage from the ``nonlocal space-time'' system \eqref{nlm}-\eqref{init} to the ``nonlocal-time'' setting \ref{b} is not addressed here since this yields a system with a flux that is local and is discontinuous in space; 
the well-posedness of such systems is largely open.

Lastly, the answer to \ref{c} remains largely open, with the exception of \cite{DHSS2023}, which addresses this issue for $N=1$ with a specific choice of $\Theta,$ and a linear flux $f$ via a relaxation scheme. 
In general, questions \ref{c} and \ref{d} with  $N=1, \boldsymbol{\Theta}=\boldsymbol{\mu}$ and for the specific choice for a linear $f,$ and $\mu$ a decreasing kernel have been investigated in recent years 
see, for instance, \cite{BS2020,BS2021,CCDKP2022,GNAL2021,CGES2021,CCS2019,FGKP2022,KP2019,CAL2020,FKG2018}, and  \cite{DH2024,NH2025} respectively. The case of a general $\mu$ and nonlinear $f$ remains open, to the best of our knowledge. {\color{black}However, for systems ($N>1$), questions \ref{c} and \ref{d} remain largely undeveloped and highly nontrivial. While recent works~\cite{AHV2023,AHV2023_1} prove that the nonlocal coupled system \eqref{nlm}–\eqref{init} is well posed for arbitrary $\bv$ initial data, the corresponding local system \eqref{local},\eqref{init} exhibits severe analytical difficulties and is, in general, ill-behaved with very limited well-posedness results:  existence for any $N$ with sufficiently small $\bv$ data~\cite{B2000,HR2015}, and  existence for $N=2$ with sufficiently small $L^{\infty}$ data~\cite{BMV2025}.}
 This substantial disparity makes question \ref{c} very delicate for $N>1$, 
and it remains an elusive open problem at the time of writing even for $\boldsymbol{\Theta}=\boldsymbol{\mu}$. Similarly, questions \ref{c} and, consequently, \ref{d} with  $\boldsymbol{\Theta}=\boldsymbol{\Gamma}$ for any $N\ge 1$ remains still unexplored.
Consequently, an affirmative answer to \ref{c}-\ref{d}, i.e., the complete passage for any general $\boldsymbol{\Theta}$ and for any nonzero integer $N$ from the ``nonlocal space-time'' system \eqref{nlm}-\eqref{init} to the local limit \eqref{local},\eqref{init} is difficult to expect and lies beyond the scope of this paper. 
The remainder of the paper is organized as follows. In \S\ref{uni}, we establish uniqueness and continuous dependence with respect to the initial data for \eqref{nlm}–\eqref{init}, based on stability estimates for local scalar conservation laws. In \S\ref{num}, we propose a first-order numerical scheme for the approximation of the initial value problem and prove convergence of the approximate solutions to the unique entropy solution, thereby establishing existence. Taken together, these results yield the well-posedness of the problem. In \S\ref{NLL}, we analyze the dynamics in the presence of fading memory, the memory-to-memoryless dynamics. In particular, we investigate the asymptotic behavior as the support of the temporal convolution kernel shrinks to zero. We prove that the unique entropy solutions of the space–time nonlocal conservation law converge to the entropy solution of the purely spatially nonlocal conservation law at the rate $\mathcal{O}(\sqrt{\delta})$, where $\delta$ denotes the radius of the time convolution. In addition, we show that the proposed finite volume approximations are asymptotically compatible with this memory-to-memoryless limit and derive error estimates of the order $\mathcal{O}(\sqrt{\delta} + \sqrt{\Delta x})$. Finally, in \S\ref{num1},  numerical experiments supporting the theory of the article are presented.
\section{Definitions and notation}\label{def}
We will be using the following notations:
\begin{enumerate}
    \item For $\boldsymbol{Z}:=(Z^k)_{k\in\mathcal{N}}\in \R^N,$ let $\norma{\boldsymbol{Z}}:=\displaystyle\sum\limits_{k\in\mathcal{N}}\abs{Z^k}$ denote the usual $1$-norm.
    \item 
$
\norma{\boldsymbol{\Theta}}_{(L^{\infty}(\overline{Q}_T))^{N^2}}:= \max\limits_{i,j\in\mathcal{N}} \norma{\Theta^{i,j}}_{L^{\infty}(\overline{Q}_T)}.
$
\item If $\boldsymbol{\mu}\in C^2(\R;\R^{N^2})=(\mu^{j,k})_{j,k\in\mathcal{N}}$, then $\boldsymbol{\mu}'=({\dot{\mu}}^{j,k})_{j,k\in\mathcal{N}}\in C^1(\R;\R^{N^2})$ and $\boldsymbol{\mu}''=({\ddot{\mu}}^{j,k})_{j,k\in\mathcal{N}} \in C(\R;\R^{N^2})$ denote the component-wise derivative and second derivative, respectively. 
   \item For $u: \overline{Q}_T\rightarrow \R,$ $\boldsymbol{U}:\overline{Q}_T \rightarrow \R^N$ 
  and $\tau>0$, 
\begin{align*}
|u|_{\lip_tL^1_x}&:=\sup_{0\leq t_1<t_2\leq T}\frac{\norma{u(t_1,\cdot)-u(t_2,\cdot)}_{L^1(\R)}}{|t_1-t_2|},\\
|\boldsymbol{U}|_{(\lip_tL^1_x)^N}&:=\max_{k\in\mathcal{N}}|U^k|_{\lip_tL^1_x}, \\  
|\boldsymbol{U}|_{(L^\infty_t\bv_x)^N}&:=\max_{k\in\mathcal{N}}\sup_{t\in[0,T]} TV(U^k(t,\dott)),\\  
\norma{\boldsymbol{U}}_{(L^{\infty}(\overline{Q}_T))^N}&:=\max_{k\in\mathcal{N}} \norma{U^k}_{L^{\infty}(\overline{Q}_T)}=1,\\ \gamma(\boldsymbol{U},\tau)&:= \max_{k\in\mathcal{N}}\sup_{\substack{
    \abs{t_1-t_2} \leq \tau\\  0\leq t_1\leq t_2\leq T }} \norma{U^k(t_1,\cdot)-U^k( t_2,\cdot)}_{L^1(\R)},\\ 
\norma{\boldsymbol{U}}_{(L^1(\overline{Q}_T))^N}&:=\sum\limits_{k\in\mathcal{N}}\norma{U^k}_{L^1(\overline{Q}_T)}.
\end{align*} 
\end{enumerate}
Since $f^k$ is nonlinear, there can be multiple weak solutions of \eqref{nlm}-\eqref{init}, like in a local hyperbolic conservation law. Hence, an entropy condition is required to single out the unique solution.
\begin{definition}\label{def:entropy}
    A function $\textbf{U} \in (C([0,T];L^1(\R;[0,1]))\cap L^{\infty}([0,T];\bv(\R)))^N$  is an entropy solution of \eqref{nlm}-\eqref{init} with initial data $\textbf{U}_0$
if for each $(k,\alpha)\in \mathcal{N}\times\R$, and for all non-negative $\phi\in C_c^{\infty}([0,T)\times \R),$
\begin{multline} \label{kruz2}
\int_{Q_T}\left|U^k(t,x)- \alpha\right|\phi_t(t,x)  \d x \d t+\int_{\R} \left|U_0^k(x)- \alpha\right|\phi(0,x) \d x  \\ 
+ \int_{Q_T}\sgn ({U}^k(t,x)-\alpha) \nu^k((\boldsymbol{\Theta}\circledast \boldsymbol{U})^k(t,x))
(f^k({U}^k(t,x))-f^k(\alpha))\phi_x(t,x) \d{x} \d{t}\\ 
 -\int_{Q_T} f^k(\alpha) (\sgn ({U}^k(t,x)-\alpha)) \partial_x\nu^k((\boldsymbol{\Theta}\circledast \boldsymbol{U})^k(t,x))\phi(t,x)\d{x} \d{t}\geq 0. 
\end{multline}
\end{definition}
\section{Uniqueness}\label{uni}
We now prove that any two entropy solutions of the IVP \eqref{nlm}-\eqref{init} are equal, more precisely, we have the following result:

\begin{theorem}[Uniqueness]
For any time $T>0,$ let $\boldsymbol{U,V}$ 
be the entropy solutions of the IVP for the system \eqref{nlm}-\eqref{init}
  with initial data $\boldsymbol{U}_0,\boldsymbol{V}_0$, respectively. Then, the following holds:
\begin{align*}
\begin{split}
\norma{\boldsymbol{U}(T,\dott)-\boldsymbol{V}(T,\dott)}_{(L^1(\R))^N}&\leq
\norma{\boldsymbol{U}_0-\boldsymbol{V}_0}_{(L^1(\R))^N}\\& +\mathcal{C} \sum\limits_{k\in\mathcal{N}}\int_0^T \norma{U^k(\tau,\cdot)-V^k(\tau,\cdot)}_{L^1(\R)} \d \tau,
\end{split}
\end{align*}   \text{where  }
 \begin{align*}
\mathcal{C}&= NT\abs{\boldsymbol{f}}_{(\operatorname{Lip}(\R))^N}|\boldsymbol{\nu}|_{(\operatorname{Lip}(\R^N))^N}\norma{\boldsymbol{\Theta}}_{{(L^{\infty}(\overline{Q}_T))}^{N^2}}|\boldsymbol{U}|_{(L^\infty_t\bv_x)^N}\\
&\quad+NT\abs{\boldsymbol{f}}_{(\operatorname{Lip}(\R))^N}{\norma{\boldsymbol{U}_0}_{{(L^1(\R))}^N}}\norma{{\color{black}\nabla }\boldsymbol{\nu}}_{(L^{\infty}(\R^N))^{N^2}}\norma{\boldsymbol{\Gamma}}_{(L^{\infty}(\R^{+}))^{N^2}}\norma{\boldsymbol{\mu}'}_{(L^{\infty}(\R))^{N^2}}\\
&\quad+NT\abs{\boldsymbol{f}}_{(\operatorname{Lip}(\R))^N}{\norma{\boldsymbol{U}_0}_{{(L^1(\R))}^N}}\norma{\boldsymbol{V}}_{(L^{1}(Q_T))^N} \norma{\boldsymbol{\Gamma}}_{(L^{\infty}(\R^{+}))^{N^2}}\norma{\boldsymbol{\mu}'}_{(L^{\infty}(\R))^{N^2}} \\&\qquad\times{\color{black}\norma{\operatorname{Hess}\boldsymbol{\nu}}}_{(L^{\infty}(\R^{N}))^{N^3}}\norma{\boldsymbol{\Theta}}_{{(L^{\infty}(\overline{Q}_T))}^{N^2}}.\end{align*}
 In particular, if $\boldsymbol{U}_0=\boldsymbol{V}_0,$ then $\boldsymbol{U}=\boldsymbol{V}$ a.e.~in $\overline{Q}_T$.
\end{theorem}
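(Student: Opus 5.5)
The plan is to freeze the nonlocal terms and regard, for each $k\in\mathcal{N}$, $U^k$ and $V^k$ as entropy solutions of two \emph{local} scalar conservation laws with space--time dependent coefficients,
\begin{align*}
\partial_t u+\partial_x\bigl(a(t,x)f^k(u)\bigr)=0,\qquad \partial_t v+\partial_x\bigl(b(t,x)f^k(v)\bigr)=0,
\end{align*}
with $a:=\nu^k((\boldsymbol{\Theta}\circledast\boldsymbol{U})^k)$ and $b:=\nu^k((\boldsymbol{\Theta}\circledast\boldsymbol{V})^k)$. These coefficients are Lipschitz in $x$, because $\boldsymbol{\mu}\in W^{2,\infty}$ and the convolution places all $x$-derivatives on $\boldsymbol{\mu}$. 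Definition~\ref{def:entropy} is exactly the Kruzhkov entropy inequality for such a local equation with the source term $-f^k(\alpha)\partial_x a$. I will then use the standard Kružkov doubling of variables (the stability estimate for local conservation laws with $x$-dependent flux, in the Karlsen--Risebro form). This gives
\begin{align*}
\norma{U^k(T)-V^k(T)}_{L^1}\le \norma{U^k_0-V^k_0}_{L^1}+\int_0^T\!\!\int_{\R}\Bigl(\abs{f^k}_{\lip}\abs{a-b}\,\abs{\partial_xU^k}+\abs{f^k}_{\lip}\,\abs{U^k}\,\abs{\partial_x(a-b)}\Bigr)\d x\d t .
\end{align*}
More precisely, the first term is $\abs{f^k}_{\lip}\int_0^T\norma{a-b}_{L^\infty}\TV(U^k(t))\,\d t$ and the second is $\abs{f^k}_{\lip}\int_0^T\norma{\partial_x(a-b)}_{L^\infty}\norma{U^k(t)}_{L^1}\,\d t$. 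The $\TV$ factor is the reason the definition requires $L^\infty_t\bv_x$ regularity of $\boldsymbol{U}$. The $L^1$ mass is conserved, since the equation is in conservative form and the data take values in $[0,1]$, so $\norma{U^k(t)}_{L^1}=\norma{U^k_0}_{L^1}$.

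Next I estimate the coefficient differences in terms of $\boldsymbol{U}-\boldsymbol{V}$.
\begin{itemize}
\item For $a-b$: using $\abs{\nu^k}_{\lip}$ and $\abs{(\Theta^{i,k}*(U^i-V^i))(t,x)}\le\norma{\boldsymbol{\Theta}}_{L^\infty}\int_0^t\norma{U^i(\tau)-V^i(\tau)}_{L^1}\d\tau$, I get $\norma{a-b}_{L^\infty}\le|\boldsymbol{\nu}|_{\lip}\norma{\boldsymbol{\Theta}}_{L^\infty}\sum_i\int_0^T\norma{U^i-V^i}_{L^1}\d\tau$.
\item For $\partial_x(a-b)$, I write it as $\nabla\nu^k(A)\cdot\partial_xA-\nabla\nu^k(B)\cdot\partial_xB$, with $A,B$ the two convolution vectors, and split it into two pieces.
\begin{itemize}
\item The first piece is $\nabla\nu^k(A)\cdot\partial_x(A-B)$. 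Since $\partial_x(A-B)$ involves $\boldsymbol{\Gamma}\,\boldsymbol{\mu}'$ convolved with $U-V$, it is bounded by $\norma{\nabla\boldsymbol{\nu}}_{L^\infty}\norma{\boldsymbol{\Gamma}}_{L^\infty}\norma{\boldsymbol{\mu}'}_{L^\infty}\sum_i\int_0^T\norma{U^i-V^i}_{L^1}$.
\item The second piece is $(\nabla\nu^k(A)-\nabla\nu^k(B))\cdot\partial_xB$. It is bounded by $\norma{\operatorname{Hess}\boldsymbol{\nu}}_{L^\infty}\abs{A-B}\,\abs{\partial_xB}$, with $\abs{A-B}\le\norma{\boldsymbol{\Theta}}_{L^\infty}\sum_i\int_0^T\norma{U^i-V^i}_{L^1}$ and $\abs{\partial_x B}\le\norma{\boldsymbol{\Gamma}}_{L^\infty}\norma{\boldsymbol{\mu}'}_{L^\infty}\norma{\boldsymbol{V}}_{L^1(Q_T)}$.
\end{itemize}
\end{itemize}
Inserting these bounds into the doubling estimate, bounding the time integrals $\int_0^T(\cdot)\,\d t$ of quantities that are constant in $t$ by a factor $T$, and summing over $k$ (which contributes the factor $N$) yields exactly the three pieces of $\mathcal{C}$ stated in the theorem.

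For the final claim, when $\boldsymbol{U}_0=\boldsymbol{V}_0$ I apply the inequality with $T$ replaced by an arbitrary $t\in(0,T]$, noting that $\mathcal{C}(t)\le\mathcal{C}(T)$. Gronwall's lemma applied to $t\mapsto\norma{\boldsymbol{U}(t)-\boldsymbol{V}(t)}_{(L^1)^N}$, which is continuous by $C([0,T];L^1)$, then forces it to vanish, so $\boldsymbol{U}=\boldsymbol{V}$ a.e.

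The main obstacle is the doubling-of-variables step with an $x$- and $t$-dependent coefficient that is only Lipschitz in $x$ and merely measurable/continuous in $t$ (memory gives it no better temporal structure). I would handle it by quoting the known $L^1$-stability estimate for $\partial_tu+\partial_x(k(t,x)f(u))=0$, which requires only $k\in L^\infty$ with $\partial_xk\in L^\infty$ and a $\bv_x$ bound on one solution, as used in the nonlocal-in-space literature. Time regularity of $k$ is not needed there, because the test function is doubled only in $x$ and the $t$-doubling is removed using the $C([0,T];L^1)$ continuity of both solutions.
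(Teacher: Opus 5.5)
Your proposal is correct and follows the paper's proof essentially step for step. Like the paper, you freeze $a=\nu^k((\boldsymbol{\Theta}\circledast\boldsymbol{U})^k)$ and $b=\nu^k((\boldsymbol{\Theta}\circledast\boldsymbol{V})^k)$ and invoke the Karlsen--Risebro continuous-dependence estimate to get the terms $\abs{U^k_x}\abs{a-b}$ and $\abs{U^k}\abs{\partial_x(a-b)}$; you then split $\partial_x(a-b)$ into a $\nabla\nu^k$ piece and a $\operatorname{Hess}\nu^k$ piece, which yields exactly the three summands of $\mathcal{C}$. Your explicit Gronwall step for the uniqueness claim fills in what the paper leaves implicit. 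Your side remark on why time regularity is not needed is imprecise, since the standard argument doubles in both $t$ and $x$ and removes the time doubling by sending $\epsilon_0\to 0$ first, but this is harmless here because the coefficients are in fact Lipschitz in $t$ by the assumed regularity of $\boldsymbol{\Gamma}$.
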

\begin{proof}
Let $(t,x)\in Q_T$. Since for each $k\in\mathcal{N}, U^k$ and $V^k$ are entropy solutions of \eqref{nlm}-\eqref{init},
using the continuous dependence estimates for conservation laws with smooth coefficients (see \cite[Theorem~1.3]{KR2003}) and following similar steps as in \cite[Theorem~4.1]{BBKT2011}, \cite[Theorem~2]{BG2016},  we have:
\begin{align*}
&\norma{U^k(T,\dott)-V^k(T,\dott)}_{L^1(\R)}\leq
\norma{U^k_0-V^k_0}_{L^1(\R)} + \abs{f^k}_{\lip(\R)}(\mathcal{I}_1^k+\mathcal{I}_2^k),
\end{align*}
where
\begin{align}\label{cont_dependennce}
\begin{split}
&\mathcal{I}_1^k:=\int_0^T\int_{\R}\abs{U^k_x(t,x)}  \abs{\nu^k((\boldsymbol{\Theta}\circledast \boldsymbol{U})^k(t,x))-\nu^k((\boldsymbol{\Theta}\circledast \boldsymbol{V})^k(t,x))} \d x \d\tau,\\ &\mathcal{I}_2^k:=\int_0^T\int_{\R} \abs{U^k(t,x)}  \abs{\partial_x(\nu^k((\boldsymbol{\Theta}\circledast \boldsymbol{U})^k(t,x)))-\partial_x(\nu^k((\boldsymbol{\Theta}\circledast \boldsymbol{V})^k(t,x)))} \d x \d\tau.
    \end{split}
\end{align}
 Note that  
 \begin{align} \label{U}
\begin{split}
&\abs{\nu^k((\boldsymbol{\Theta}\circledast \boldsymbol{U})^k(t,x))-\nu^k((\boldsymbol{\Theta}\circledast \boldsymbol{V})^k(t,x))}\\
&\le|\nu^k|_{\operatorname{Lip}(\R^N)}\norma{(\boldsymbol{\Theta}\circledast \boldsymbol{U})^k(t,x)-(\boldsymbol{\Theta}\circledast \boldsymbol{V})^k(t,x)}\\
&=|\nu^k|_{\operatorname{Lip}(\R^N)}\sum\limits_{j\in\mathcal{N}}\int_0^t\int_{\R}
\abs{U^j(\tau,\xi)-V^j(\tau,\xi)}\mu^{j,k}(x-\xi)\Gamma^{j,k}(t-\tau) \d \tau \d \xi\\
 &\le|\nu^k|_{\operatorname{Lip}(\R^N)}\norma{\boldsymbol{\Theta}}_{{(L^{\infty}(\overline{Q}_T))}^{N^2}}\sum\limits_{k\in\mathcal{N}}\int_0^t\norma{U^k(\tau,\cdot)-V^k(\tau,\cdot)}_{L^1(\R)} \d \tau.
 \end{split}
 \end{align}
 Also, for $\boldsymbol{Z}=\boldsymbol{U},\boldsymbol{V}$,
\begin{align*}
&\nabla\nu^k((\boldsymbol{\Theta}\circledast \boldsymbol{Z})^k(t,x))\dott \partial_x((\boldsymbol{\Theta}\circledast \boldsymbol{Z})^k)(t,x)\\&\quad=\sum\limits_{j\in\mathcal{N}}\partial_{j}\nu^{k}\!\left((\boldsymbol{\Theta}\circledast \boldsymbol{Z})^k(t,x)\right)\,
\int_{0}^{t}\!\int_{\mathbb{R}}
Z^{j}(\tau,\xi)\,\dot{\mu}^{j,k}(x-\xi) \d \xi \d \tau,\end{align*}
and hence,
\begin{align*}
&\nabla\nu^k((\boldsymbol{\Theta}\circledast \boldsymbol{U})^k(t,x))\dott \partial_x((\boldsymbol{\Theta}\circledast \boldsymbol{U})^k)(t,x)-\nabla\nu^k((\boldsymbol{\Theta}\circledast \boldsymbol{V})^k(t,x))\dott \partial_x((\boldsymbol{\Theta}\circledast \boldsymbol{V})^k)(t,x)\\
&=\sum\limits_{j\in\mathcal{N}}\partial_{j}\nu^{k}\!\left((\boldsymbol{\Theta}\circledast \boldsymbol{U})^k(t,x)\right)\,
\int_{0}^{t}\!\int_{\mathbb{R}}
(U^{j}(\tau,\xi)-V^{j}(\tau,\xi))\,\dot{\mu}^{j,k}(x-\xi)\,
\Gamma^{j,k}(t-\tau)\,d\xi\,d\tau\\
&\quad+\sum\limits_{j\in\mathcal{N}}(\partial_{j}\nu^{k}\!\left((\boldsymbol{\Theta}\circledast \boldsymbol{U})^k(t,x)\right)-\partial_{j}\nu^{k}\!\left((\boldsymbol{\Theta}\circledast \boldsymbol{V})^k(t,x)\right)\,
\int_0^t \int_{\mathbb{R}}V^{j}(\tau,\xi))\,\dot{\mu}^{j,k}(x-\xi)\,
\Gamma^{j,k}(t-\tau)\,d\xi\,d\tau\\
&=\mathcal{I}_{21}^k+\mathcal{I}_{22}^k.
\end{align*}
Clearly, with $\mathcal{C}_2:=\norma{{\color{black}\nabla }\boldsymbol{\nu}}_{(L^{\infty}(\R^N))^{N^2}}\norma{\boldsymbol{\Gamma}}_{(L^{\infty}(\R^{+}))^{N^2}}\norma{\boldsymbol{\mu}'}_{(L^{\infty}(\R))^{N^2}}$, \begin{align}
\label{I21}
\mathcal{I}_{21}^k
&\le \mathcal{C}_2\sum\limits_{k\in\mathcal{N}}\int_0^T\norma{U^k(\tau,\cdot)-V^k(\tau,\cdot)}_{L^1(\R)} \d \tau.
\end{align}
Further, using \eqref{U} and a similar estimate with $\partial_j\nu^k,$ for every $j\in\mathcal{N}$, we get:
\begin{align*}
&\abs{\partial_{j}\nu^{k}\!\left((\boldsymbol{\Theta}\circledast \boldsymbol{U})^k(t,x)\right)-\partial_{j}\nu^{k}\!\left((\boldsymbol{\Theta}\circledast \boldsymbol{V})^k(t,x)\right)}\\&\le \norma{\operatorname{Hess}\nu^k}_{(L^{\infty}(\R^{N}))^{N^2}}\norma{\boldsymbol{\Theta}}_{{(L^{\infty}(\overline{Q}_T))}^{N^2}}\sum\limits_{k\in\mathcal{N}}\int_0^T\norma{U^k(\tau,\cdot)-V^k(\tau,\cdot)}_{L^1(\R)} \d \tau,
\end{align*} and hence with, $$\mathcal{C}_3:=\norma{\boldsymbol{V}}_{(L^{1}(Q_T))^N} \norma{\boldsymbol{\Gamma}}_{(L^{\infty}(\R^{+}))^{N^2}}\norma{\boldsymbol{\mu}'}_{(L^{\infty}(\R))^{N^2}} {\color{black}\norma{\operatorname{Hess}\boldsymbol{\nu}}}_{(L^{\infty}(\R^{N}))^{N^3}}\norma{\boldsymbol{\Theta}}_{{(L^{\infty}(\overline{Q}_T))}^{N^2}},$$ \begin{align}\label{I22}
\mathcal{I}_{22}^k
&\le \mathcal{C}_3\sum\limits_{k\in\mathcal{N}}\int_0^T\norma{U^k(\tau,\cdot)-V^k(\tau,\cdot)}_{L^1(\R)} \d \tau.
\end{align} 
 Consequently, using \eqref{U}-\eqref{I22}, we have
\begin{align} \label{U1}
\begin{split}
\mathcal{I}_1^k
&\le\mathcal{C}_1\sum\limits_{k\in\mathcal{N}}\int_0^T\norma{U^k(\tau,\cdot)-V^k(\tau,\cdot)}_{L^1(\R)} \d \tau,\\
\mathcal{I}_2^k& \leq \int_{0}^{T}\int_{\R}\abs{U^k(\tau,x)} (\mathcal{I}_{21}^k+\mathcal{I}_{22}^k) \d x \d \tau
 \leq  \mathcal{C}_4\sum\limits_{k\in\mathcal{N}}\int_0^T\norma{U^k(\tau,\cdot)-V^k(\tau,\cdot)}_{L^1(\R)} \d \tau.
\end{split},
\end{align}
where $\mathcal{C}_1:=T |\boldsymbol{\nu}|_{(\operatorname{Lip}(\R^N))^N}\norma{\boldsymbol{\Theta}}_{{(L^{\infty}(\overline{Q}_T))}^{N^2}}|\boldsymbol{U}|_{(L^\infty_t\bv_x)^N}$, and $\mathcal{C}_4:=T{\norma{\boldsymbol{U}_0}_{{(L^1(\R))}^N}}(\mathcal{C}_2+\mathcal{C}_3).$
Finally, inserting the above estimates in \eqref{cont_dependennce} and summing over $k\in\mathcal{N}$, we have the result.
\end{proof}

\section{Numerical approximations and their convergence}\label{num}
For $\Delta x, \Delta t>0,$ and $\lambda:=\Delta t/\Delta x,$ consider equidistant spatial grid points $x_i:=i\Delta x$ for $i\in \Z,$ and let $\chi_i(x)$ denote the indicator function of $C_i:=[x_{i-1/2}, x_{i+1/2})$, where $x_{i+1/2}=\frac{1}{2}(x_i+x_{i+1})$. Further, let $t^n:=n\Delta t$ 
for integers in $\mathcal{N}_T:=\{0, \ldots, N_T\}$, such that $T=N_T \D t$ denote the temporal grid points, and let
$\chi^n(t)$ denote the indicator function of $C^{n}:=[t^n,t^{n+1})$. For every $k\in\mathcal{N}$, we approximate the initial data \eqref{init}, according to:
\begin{equation*}
U^{k,\Delta}_0(x):=\sum\limits_{i\in\Z}\chi_i(x)U^{k,0}_i\quad \mbox{where }U^{k,0}_i=\int_{C_i}U_{0}^k(x)\d x, \quad i\in \Z,
\end{equation*}
and define a piecewise constant approximate solution $U^{k,\Delta}$ to~\eqref{nlm}-\eqref{init}
by:
$$
  U^{k,\Delta} (t,x) =  U^{k,n}_{i}
  \mbox{ for } 
(t,x)\in {C}^{n}\times C_i, (i,n)\in \Z\times\mathcal{N}_T. 
$$
For every $(i,k,n)\in \Z\times\mathcal{N}\times\mathcal{N}_T$, $U^{k,n}_{i}$ is defined via the following marching formula:
\begin{align}
\begin{split}U^{k,n}_i&=H^k(\nu^k(\boldsymbol{c}^{k,n-1}_{i-1/2}),\nu^k(\boldsymbol{c}^{k,n-1}_{i+1/2}),U_{i-1}^{k,n},U_i^{k,n-1},U_{i+1}^{k,n-1})
    \\  
    & := 
   U^{k,n-1}_i- \lambda \big[
    \mathcal{F}^k(\nu^k(\boldsymbol{c}^{k,n-1}_{i+1/2}),U_i^{k,n-1},U_{i+1}^{k,n})
    - 
    \mathcal{F}^k(\nu^k(\boldsymbol{c}^{k,n-1}_{i-1/2}),U_{i-1}^{k,n-1},U_{i}^{k,n})
     \big]\\ 
     \label{scheme2}
     &:=U^{k,n-1}_i- \lambda \bigl[
    \mathcal{F}^{k,n}_{i+1/2} (U_i^{k,n-1},U_{i+1}^{k,n-1})
    - 
\mathcal{F}^{k,n-1}_{i-1/2} (U_{i-1}^{k,n-1},U_{i}^{k,n})
     \bigr].
     \end{split}\end{align}Here,  $\boldsymbol{c}_{i+1/2}^{k,n}:= \left(c_{i+1/2}^{s,k,n}\right)_{s\in \mathcal{N}}$ and 
$\mathcal{F}^k(\nu^k(\boldsymbol{c}^{k,n}_{i+1/2}),U_i^{k,n},U_{i+1}^{k,n})
    $ denotes the numerical approximation of the flux $f^k(U^k)\nu^k((\boldsymbol{\Theta}\circledast  \boldsymbol{U})^k)$ at $(t^n,x_{i+1/2})$ for $(k,i,n)\in \mathcal{N}\times\Z\times\mathcal{N}_T$, where for every $s,k\in \mathcal{N}$, 
\begin{align*}c_{i+1/2}^{s,k,n}&:=\Delta x \D t \sum\limits_{m=0}^n\sum\limits_{p\in\Z} \Theta^{s,k,n-m}_{i+1/2-p} U^{s,m}_{p}=\Delta x \D t \sum\limits_{m=0}^n\sum\limits_{p\in\Z} \Theta^{s,k,m}_{i+1/2-p} U^{s,n-m}_{p}\end{align*} approximates $\int_0^{t^n}\int_{\R}\Theta^{s,k}(x_{i+1/2}-\xi,t^n-\tau) U^{s,\D}(\tau,\xi )\d \xi \d \tau$. 

 Further, $\Theta_p^{j,k,s}=\mu^{j,k,p} \Gamma^{j,k,s},$ with $\mu^{j,k,p}$ and $\Gamma^{j,k,s}$ as the integral averages of $\mu^{j,k}$ and $\Gamma^{j,k}$ over $C_p$ and $C_s$ respectively.
In general, $\mathcal{F}^k$ can be defined as an appropriate nonlocal extension of any monotone numerical flux, meant for local conservation laws, for example,
\begin{equation*}
  \mathcal{F}^k(a_1,a_2,a_3)
   = 
  \frac{a_1}{2}\Big( f^k(a_2)
    +
    f^k(a_3)\Big)
  -
  \beta\frac{(a_3-a_2)}{2\, \lambda}, \beta\in (0,2/3),
\end{equation*} is an extension of Lax-Friedrich's flux. This flux will be used in the sequel,
where  $\Delta t$ is chosen in order to satisfy
the CFL condition
\begin{equation}\label{CFL_LF}
   \lambda \le \frac{\min(1, 4-6\beta,6\beta)}{1+6\abs{\boldsymbol{f}}_{(\lip(\R))^N}\norma{\boldsymbol{\nu}}_{(L^\infty(\R^N))^N}}.\end{equation}


\begin{remark}\normalfont
The choice of $\Gamma^{j,k,s}$ as the integral average will play a crucial role in proving the asymptotic compatibility in Section~\ref{NLL}. However, the convergence of the finite volume schemes,  i.e., the following stability estimates (cf.~Lemma~\ref{stability}) and the convergence  (cf.~Theorem~\ref{convergence})--hold for the pointwise evaluation approximation of $\boldsymbol{\Gamma}$ at the cell center as well.
\end{remark}

\begin{lemma}\label{stability}Let $(j,i,k,n,t)\in\mathcal{N}\times\Z \times \mathcal{N}\times \mathcal{N}_T\times\R^{+}$ and let the CFL condition~\eqref{CFL_LF} hold. 
The numerical approximations generated by the above marching formula satisfy:
\begin{enumerate}[label=(\alph*)]
 \item \label{lem:monotone} [Monotonicity] For a given fixed sequence
$\boldsymbol{c}_{i+1/2}^{n},$ $H^k$ is increasing in the last three arguments.
\item \label{Pos} [Invariant region principle]  
$0\leq U^{k,n}_i \leq 1.$
   \item \label{cons} [Conservation] 
$\sum\limits_{i\in \Z}U_i^{k,n} =  \sum\limits_{i\in \Z}U_i^{k,0}.$
\item \label{lem:L1} [$L^1$ bound]
    ${\norma{ {U^{k,\Delta}} (t)}_{L^1(\R)}} = {\norma{ {U^{k,\Delta}} (0)}_{L^1(\R)}}.$


\item \label{lem:BV}[ $\operatorname{BV}$ estimate] 

    \begin{align*}
    \sum\limits_{i\in\mathbb{Z}}
        \modulo{U_{i+1}^{k,n}-U_i^{k,n}} 
          \leq\left(
\exp(\mathcal{C}_7^kt)\sum\limits_{i\in\Z}\abs{U_{i+1}^{k,0}-U_i^{k,0}}  + \frac{\exp(\mathcal{C}_7^kt)-1}{\mathcal{C}_7^k}\mathcal{C}_8^k\right),
\end{align*}
where
\begin{align*}  
\mathcal{C}_7^k&=\mathcal{C}_5^k \abs{f^k}_{\lip(\R)}  { \norma{{\color{black}\nabla }{\nu^k}}_{(L^{\infty}(\R^N))^{N}}},\\
\mathcal{C}_8^k&=\mathcal{C}_6^k\abs{f^k}_{\lip(\R)}\norma{{\color{black}\nabla }{\nu^k}}_{(L^{\infty}(\R^N))^{N}}\norma{U_{0}^k}_{L^1(\R)}{+2(\mathcal{C}^k_5)^2\abs{f^k}_{\lip(\R)}\abs{{\color{black}\nabla }{\nu^k}}_{(\lip(\R^N))^{N}}}\norma{U_{0}^k}_{L^1(\R)}.
      \end{align*}
      \item \label{lem:L1t}[Time Estimate]
For $m>n\in \mathcal{N}_T,$ we have the following time estimate:
\begin{align*}
\Delta x\sum\limits_{i\in \mathbb{Z}} \abs{U_i^{k,m}-U_i^{k,n}} \leq \mathcal{C}_9^k \Delta t (m-n),
\end{align*}
where $\mathcal{C}_9^k$ depends on 
  $\sum\limits_{i\in\mathbb{Z}}
        \modulo{U_{i+1}^{k,n}-U_{i}^{k,n}} ,\norma{{\color{black}\nabla }{\nu^k}}_{(L^{\infty}(\R^N))^{N}},\norma{U_{0}^k}_{L^1(\R)},\abs{\mu^{j,k}}_{\operatorname{BV}(\R)},$\\ $  \norma{\Gamma^{j,k}}_{L^1(\R^+)}$ and is independent of $\delta$. 
        \item 
  \label{lem:entropy}[Discrete entropy condition]
For all $\alpha\in \R,$  we have
 \begin{align*} 
 \begin{split}
 &\modulo{U_i^{k,n+1}-\alpha}-\modulo{U_i^{k,n}- \alpha}+\lambda\left({{G^{k,n}_{{i+1/2}}}(U_i^{k,n} ,U_{i+1}^{k,n})}-G^{k,n}_{{i-1/2}}(U_{i-1}^{k,n} ,U_i^{k,n})\right)\\
 &\quad \quad +\lambda\sgn(U_i^{k,n+1}-\alpha)f^k(\alpha)(\nu^k(\boldsymbol{c}_{i+1/2}^{k,n})-\nu^k(\boldsymbol{c}_{i-1/2}^{k,n}))\le 0,
 \end{split}
 \end{align*}
 \begin{flalign*}&\text{where  }
{G^{k,n}_{{i+1/2}}}{(a,b)}=\mathcal{F}^{k,n}_{{i+1/2}}\left({\max(a,\alpha)},\,{\max(b,\alpha)}\right)
  -
  \mathcal{F}^{k,n}_{{i+1/2}}\left({\min(a,\alpha)},\,{\min(b,\alpha)}\right).&
\end{flalign*} 
 \item \label{eq:c_i+1/2} The convolution terms $\boldsymbol{c}$ satisfy the following bounds:
 \begin{align} \label{eq:c1}
&0 \leq {c}^{j,k,n}_{i+1/2} \leq 1,
\end{align}
\begin{align}
\label{eq:c2}
  &  | c^{j,k,n}_{i+1/2} - c^{j,k,n}_{i-1/2}|
    \leq 
    \mathcal{C}_5^k\Delta x,     \end{align}
     where $\mathcal{C}_5^k=\norma{U_{0}^k}_{L^1(\R)}\norma{\dot{\mu}^{j,k}}_{L^\infty(\R)}\norma{\Gamma^{j,k}}_{L^1(\R^+)},$  
     
    \begin{align}\label{eq:c3} &
    |c^{j,k,n}_{i+3/2} - 2c^{j,k,n}_{i+1/2}+c^{j,k,n}_{i-1/2}|
    \leq 
 \mathcal{C}_6^k\, \Delta x^{2},
 \end{align} 
 where $\mathcal{C}_6^k=2\norma{U_{0}^k}_{L^1(\R)} {\norma{\ddot{\mu}^{j,k}}_{L^{\infty}(\R)}} \norma{\Gamma^{j,k}}_{L^1(\R^+)}.$
 
      \end{enumerate}
    \end{lemma}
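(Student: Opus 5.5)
The plan is to follow the standard route for Lax--Friedrichs type schemes for nonlocal conservation laws (as in \cite{AHV2023,BG2016,ACT2015}), treating the convolution vector $\boldsymbol{c}^{k,n}_{i\pm1/2}$ as frozen data at each time level. The one new ingredient is the time sum in $c^{s,k,n}_{i+1/2}$. I would therefore begin with item \ref{eq:c_i+1/2}, since every other estimate uses it. I read the scheme \eqref{scheme2} as the explicit update using $U^{k,n-1}$ on the right-hand side, with the superscripts there being typos.

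For \eqref{eq:c1}, nonnegativity of $c$ follows from $U\ge0$ and $\Theta\ge0$ by induction on $n$ together with \ref{Pos}. The upper bound comes from bounding $\Delta x\sum_p \mu^{s,k,p}U^{s,m}_p$ and $\Delta t\sum_m\Gamma^{s,k,m}$; I expect this to require a normalization of the kernels, which I would assume in the same way the earlier references do. For \eqref{eq:c2}, I would take the difference of two consecutive interfaces, which replaces $\mu^{s,k}_{i+1/2-p}-\mu^{s,k}_{i-1/2-p}$ by at most $\Delta x\norma{\dot\mu^{s,k}}_{L^\infty}$. Summing over $p$ with weight $\Delta x\,U^{s,m}_p$ and using \ref{cons} gives the factor $\norma{U^s_0}_{L^1}$. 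Summing $\Delta t\,\Gamma^{s,k,n-m}$ over $m$ is bounded by $\norma{\Gamma^{s,k}}_{L^1(\R^+)}$, because $\Gamma^{s,k,m}$ is a cell average. Estimate \eqref{eq:c3} is identical, using a second-order Taylor expansion of $\mu$.

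Items \ref{lem:monotone}--\ref{lem:L1} are routine. For monotonicity, I would differentiate $H^k$ in each of its last three arguments. The CFL condition \eqref{CFL_LF} with $\beta\in(0,2/3)$ makes $1-\beta-\lambda\cdot\mathcal{O}(\abs{f}\,\norma{\nu}_\infty)\ge0$ and makes the off-diagonal derivatives equal to $\tfrac{\beta}{2}\pm\tfrac{\lambda}{2}\nu f'\ge0$. For the invariant region, I would use monotonicity and compare with the states $0$ and $1$. Since $f(0)=f(1)=0$, the constant states are preserved up to the term $\lambda f(\alpha)(\nu_{i+1/2}-\nu_{i-1/2})$, which vanishes for $\alpha\in\{0,1\}$. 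Conservation follows by telescoping the flux differences, and the $L^1$ bound follows from \ref{Pos} and \ref{cons}. The discrete entropy inequality \ref{lem:entropy} is the usual Crandall--Majda argument: write $\alpha=H^k(\cdot,\alpha,\alpha,\alpha)+\lambda f(\alpha)(\nu_{i+1/2}-\nu_{i-1/2})$, then apply monotonicity to $\max(\cdot,\alpha)$ and $\min(\cdot,\alpha)$ and subtract.

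The main obstacle is the BV estimate \ref{lem:BV}. I would write $\Delta_{i+1/2}U^{n+1}=U^{n+1}_{i+1}-U^{n+1}_i$ and split the scheme's increment into a part with frozen $\nu$, which is handled by Harten's incremental form with nonnegative coefficients summing to at most one thanks to the CFL condition, and remainder terms. One remainder is $f(U_i)(\nu_{i+3/2}-2\nu_{i+1/2}+\nu_{i-1/2})$. To bound it, I would write the second difference of $\nu\circ\boldsymbol{c}$ via Taylor expansion as $\nabla\nu\cdot(\text{second difference of }c)$ plus $\abs{\nabla\nu}_{\lip}\,\abs{\text{first difference of } c}^2$. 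By \eqref{eq:c2}--\eqref{eq:c3} this is $\mathcal{O}(\Delta x^2)$ with constants $\mathcal{C}^k_6$ and $2(\mathcal{C}^k_5)^2$; summed over $i$ with $\sum_i\abs{f(U_i)}\le\abs{f}_{\lip}\norma{U_0}_{L^1}/\Delta x$, it yields $\Delta t\,\mathcal{C}^k_8$. The other remainder is $\Delta f(U)\cdot(\nu_{i+1/2}-\nu_{i-1/2})$, which gives $\Delta t\,\mathcal{C}^k_7\sum_i\abs{\Delta_{i+1/2}U^n}$. The resulting recursion is $TV^{n+1}\le(1+\mathcal{C}_7\Delta t)TV^n+\mathcal{C}_8\Delta t$, and iterating it gives the stated exponential bound. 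Finally, the time estimate \ref{lem:L1t} follows by summing $\abs{U^{n+1}_i-U^n_i}$ over $i$, bounding it by $\lambda(\abs{f}_{\lip}\norma{\nu}_\infty TV^n+\beta TV^n/\lambda+\mathcal{C}_7\text{-type terms})$, using \ref{lem:BV}, and telescoping over time steps. The constants involve $\norma{\Gamma}_{L^1}$ but not $\norma{\Gamma}_{L^\infty}$, so they are independent of $\delta$.
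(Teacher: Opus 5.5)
Your proposal is correct and follows essentially the paper's route. You bound the first and second differences of $\boldsymbol{c}$ by differencing the cell-averaged kernels, so that only $\|\Gamma^{j,k}\|_{L^1(\R^+)}$ enters, and you get the BV bound from Harten's incremental form for the frozen-coefficient part plus two nonlocal remainders: a first-difference one giving $\mathcal{C}_7^k$ and a second-difference one giving $\mathcal{C}_8^k$. The other items use the standard arguments the paper only cites.

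One harmless index slip: if $f^k(U^{k,n}_i)$ multiplies the second difference of $\nu^k(\boldsymbol{c})$, the companion term is $(f^k(U^{k,n}_{i+1})-f^k(U^{k,n}_i))(\nu^k(\boldsymbol{c}^{k,n}_{i+3/2})-\nu^k(\boldsymbol{c}^{k,n}_{i+1/2}))$, as in the paper, not the difference at $i\pm 1/2$. This does not change the estimate.
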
 
    \begin{proof}We skip here; the proofs of \ref{lem:monotone}-\ref{lem:entropy} and \ref{eq:c_i+1/2}(\ref{eq:c1}) are quite classic by now in the literature of nonlocal conservation laws; see, for example, \cite{ACT2015, AHV2023, AV2023,ACG2015} and references therein. {\color{black}Items~\ref{lem:BV} and \eqref{eq:c2}-\eqref{eq:c3} are crucial and would yield spatial bounds that depend solely on $\|\Gamma^{j,k}\|_{L^1(\mathbb{R})}$ and are independent of $\|\Gamma^{j,k}\|_{L^{\infty}(\mathbb{R})}$. This property would enable the asymptotic passage from the memory-to-memoryless limit in \S\ref{NLL}. Therefore, in what follows, we prove these estimates one by one.} Note that \begin{align*}
\abs{c^{j,k,n}_{i+1/2} - c^{j,k,n}_{i-1/2}}
&= \Delta x \D t \bigg|\Delta_x^+\left(\sum\limits_{m=0}^n\sum\limits_{p\in\Z} \Theta^{j,k,n-m}_{i-1/2-p} U^{j,m}_{p}\right)\bigg|\\&= \Delta x\Delta t\sum\limits_{m=0}^n\sum\limits_{p\in\mathbb Z}
\bigg|\Delta_x^+\left(\mu^{j,k}_{i-1/2-p}\right)\bigg||\Gamma^{j,k}(t^{n-m})|U^{j,m}_p \\
&\le \Delta t\sum\limits_{m=0}^n|\Gamma^{j,k}(t^{n-m})|\Delta x\sum\limits_{p\in\mathbb Z}U^{j,m}_p\int_{C_{i-p}} |\dot{\mu}^{j,k}(s)|\,ds\le \mathcal{C}_5^k \D x,\end{align*} 
where
$\mathcal{C}_5^k:=\norma{U_{0}^k}_{L^1(\R)}\norma{\dot{\mu}^{j,k}}_{L^\infty(\R)}\norma{\Gamma^{j,k}}_{L^1(\R^+)},$
proving \eqref{eq:c2}. By a  repeated use of the mean value theorem on ${\mu}^{j,k}$ and ${\dot{\mu}}^{j,k},$ we have, 
\begin{align*}
    \begin{split}
    &\abs{( c^{j,k,n}_{i+3/2} - c^{j,k,n}_{i+1/2}) -( c^{j,k,n}_{i+1/2} - c^{j,k,n}_{i-1/2}) }
    \\&= 
    \Delta x \D t \bigg|\Delta_x^+\left(\sum\limits_{m=0}^n\sum\limits_{p\in\Z} \Theta^{j,k,n-m}_{i+1/2-p} U^{j,m}_{p}\right)-\Delta_x^+\left(\sum\limits_{m=0}^n\sum\limits_{p\in\Z} \Theta^{j,k,n-m}_{i-1/2-p} U^{j,m}_{p}\right) \bigg|\\
    &\le 
     \Delta x \D t \sum\limits_{k=0}^n\bigg|\sum\limits_{p\in\Z} U^{j,m}_{p}\Theta^{j,k,n-m}_{i-p-1} -2\sum\limits_{p\in\Z} U^{j,m}_{p}\Theta^{j,k,n-m}_{i-p}+\sum\limits_{p\in\Z} U^{j,m}_{p}\Theta^{j,k,n-m}_{i-p+1} \bigg|\\
      & \le 
    \Delta x \D t \sum\limits_{m=0}^n\sum\limits_{p\in\Z}U^{j,m}_{p}\bigg|\Delta_x^+\Theta^{j,k,n-m}_{i+1/2-p}-\Delta_x^+\Theta^{j,k,n-m}_{i-1/2-p}\bigg|= \mathcal{C}_6^k (\D x)^2 ,
    \end{split}
\end{align*}
where $C_6:=2\norma{U_{0}^k}_{L^1(\R)} {\norma{\ddot{\mu}^{j,k}}_{L^{\infty}(\R)}} \norma{\Gamma^{j,k}}_{L^1(\R^+)}.$  This completes the proof of \eqref{eq:c3}. To prove \ref{lem:BV}, we use the incremental form of \eqref{scheme2}, 
       the CFL condition \eqref{CFL_LF}, and repeating the Harten's argument (see \cite[Lem.~3.12]{HR2015}), we have 
   \begin{align}
\nonumber     U_{i+1}^{k,n+1}-U_{i}^{k,n+1}
&:=\mathcal{A}^{k,n}_{i,1}+\mathcal{A}^{k,n}_{i,2}+\mathcal{A}^{k,n}_{i,3},\end{align}
      where
        \begin{align*}
&\mathcal{A}^{k,n}_{i,1}=(U_{i+1}^{k,n}-U_{i}^{k,n}) \left(1-
      a^{k,n}_{i+1/2}-
      b^{k,n}_{i+1/2}\right)+
      a^{k,n}_{i-1/2}(U_{i}^{k,n}-U_{i-1}^{k,n}) +
      b^{k,n}_{i+1/2}(U_{i+2}^{k,n}-U_{i+1}^{k,n})
      \\
     & \mathcal{A}^{k,n}_{i,2}=-\lambda \left(
        {\color{black}\mathcal{F}^{k,n}_{{i+3/2}} (U_{i+1}^{k,n},U_{i+1}^{k,n})}
        -
          {\color{black}\mathcal{F}^{k,n}_{{i+1/2}}(U_{i+1}^{k,n},U_{i+1}^{k,n})}
\right),\\
&\mathcal{A}^{k,n}_{i,3}=\lambda\left({\color{black}\mathcal{F}^{k,n}_{{i+1/2}} (U_{i}^{k,n},U_{i}^{k,n})}
        -
          {\color{black}\mathcal{F}^{k,n}_{{i-1/2}} (U_{i}^{k,n},U_{i}^{k,n})}
      \right),\\
      & a^{k,n}_{i-1/2}
     = 
    \lambda \,
    \frac{ {\color{black}\mathcal{F}^{k,n}_{{i-1/2}} (U_{i}^{k,n},U_{i}^{k,n})}
      - \mathcal{F}^{k,n}_{{i-1/2}} (U_{i-1}^{k,n},U_{i}^{k,n})}{(U_{i}^{k,n}-U_{i-1}^{k,n})},
    \\& b^{k,n}_{i+1/2}
     = 
    \lambda \,
    \frac{
{\color{black}\mathcal{F}^{k,n}_{{i+1/2}} (U_{i}^{k,n},U_{i+1}^{k,n})}
      -
      \mathcal{F}^{k,n}_{{i+1/2}} (U_{i}^{k,n},U_{i}^{k,n})}{(U_{i+1}^{k,n}-U_{i}^{k,n})}.
   \end{align*}
With similar calculations as in  \cite{ACT2015} and using \eqref{CFL_LF}, we have $0\le  a^{k,n}_{i+1/2},  b^{k,n}_{i+1/2}\le 1/3,$ implying
$$\sum\limits_{i\in\Z}\modulo{\mathcal{A}^{k,n}_{i,1}}\le\sum\limits_{i\in\Z}\modulo{U_{i+1}^{k,n}-U_{i}^{k,n}}.$$
   {Further,}
\begin{align*}\begin{split}\frac{\mathcal{A}^{k,n}_{i,2}+\mathcal{A}^{k,n}_{i,3}}{\lambda}
       &= 
      - \left((f^k(U^{k,n}_{i+1})-f^k(U^{k,n}_{i})\right)
        \left( (\nu^k(\boldsymbol{c}^{k,n}_{{i+3/2}})-\nu^k(\boldsymbol{c}^{k,n}_{{i+1/2}})\right)\\
     &\quad -  f^k(U^{k,n}_{i})\left((\nu^k(\boldsymbol{c}^{k,n}_{{i+3/2}})-\nu^k(\boldsymbol{c}^{k,n}_{{i+1/2}}))-(
         \nu^k(\boldsymbol{c}^{k,n}_{{i+1/2}})-\nu^k(\boldsymbol{c}^{k,n}_{{i-1/2}}))
      \right).
      \end{split}
    \end{align*}
Using the facts that {$f^k(0)=0$ and $0\le U_i^{k,n}\le 1,$} we have,
   $$|f^k(U^{k,n}_{i})|
    \leq \abs{f^k}_{\lip(\R)} U_i^{k,n}\le \abs{f^k}_{\lip(\R)}.$$
 Finally, using the above estimate in addition to Lemma~\ref{eq:c_i+1/2} and {mean value theorem on $\nu^k$ and $\nabla{\nu}^k$}, we have
\begin{align*}
& |{\mathcal{A}^{k,n}_{i,2}+\mathcal{A}^{k,n}_{i,3}}|
  \\&\le\lambda \modulo{f^k(U^{k,n}_{i+1})-f^k(U^{k,n}_{i})}
  \norma{{\color{black}\nabla }{\nu^k}}_{(L^{\infty}(\R^N))^{N}}\modulo{\boldsymbol{c}^{k,n}_{i+3/2}-\boldsymbol{c}^{k,n}_{i+1/2}} \\&\quad+
      \lambda \abs{f^k}_{\lip(\R)}\norma{{\color{black}\nabla }{\nu^k}}_{(L^{\infty}(\R^N))^{N}}\abs{U_i^{k,n}}\modulo{\boldsymbol{c}^{k,n}_{i+3/2}-2\boldsymbol{c}^{k,n}_{i+1/2}+\boldsymbol{c}^{k,n}_{i+1/2}}\\
         &{\quad+  \lambda \abs{f^k}_{\lip(\R)}\abs{{{\color{black}\nabla }}{\nu^k}}_{(\lip(\R^N))^{N}}\modulo{\boldsymbol{c}^{k,n}_{i+1/2}-\boldsymbol{c}^{k,n}_{i-1/2}}\abs{U_i^{k,n}}\left(\modulo{\boldsymbol{c}^{k,n}_{i+1/2}-\boldsymbol{c}^{k,n}_{i-1/2}}+\modulo{\boldsymbol{c}^{k,n}_{i+3/2}-\boldsymbol{c}^{k,n}_{i+1/2}}\right)}
        \\
         &\le\Delta t\abs{{\color{black}\nabla }{\nu^k}}_{(\lip(\R^N))^{N}}\abs{f^k}_{\lip(\R)}\modulo{U_{i}^{k,n}-U_{i}^{k,n}}\mathcal{C}_5^k \\&\quad +
    \abs{f^k}_{\lip(\R)}\norma{{\color{black}\nabla }{\nu^k}}_{(L^{\infty}(\R^N))^{N}} \abs{U_i^{k,n}}\mathcal{C}_6^k\Delta x\Delta t\\
         &\quad+  2\abs{f^k}_{\lip(\R)}\abs{{{\color{black}\nabla }}{\nu^k}}_{(\lip(\R^N))^{N}}\abs{U_i^{k,n}}(\mathcal{C}^k_5)^2\Delta x\Delta t.  \end{align*}
Now, summing up  $|{\mathcal{A}^{k,n}_{i,2}+\mathcal{A}^{k,n}_{i,3}}|$ over all $i$,
        \begin{align}
   \nonumber \sum\limits_{i\in\Z} |{\mathcal{A}^{k,n}_{i,2}+\mathcal{A}^{k,n}_{i,3}}|&\le \mathcal{C}_5^k\Delta t  \abs{f^k}_{\lip(\R)}  { \norma{{\color{black}\nabla }{\nu^k}}_{(L^{\infty}(\R^N))^{N}}}\sum\limits_{i\in\Z}\abs{U_{i+1}^{k,n}-U_{i}^{k,n}}\nonumber\\&\nonumber\quad+ \mathcal{C}_6^k\Delta t \abs{f^k}_{\lip(\R)}\norma{{\color{black}\nabla }{\nu^k}}_{(L^{\infty}(\R^N))^{N}}\norma{U_{0}^k}_{L^1(\R)} \\ \nonumber
      &\quad+2(\mathcal{C}^k_5)^2\Delta t\abs{f^k}_{\lip(\R)}\abs{{{\color{black}\nabla }}{\nu^k}}_{(\lip(\R^N))^{N}}\norma{U_{0}^k}_{L^1(\R)}\\  \label{BC}  
       &=\mathcal{C}_7^k\Delta t \sum\limits_{i\in\Z}\abs{U_{i+1}^{k,n}-U_{i}^{k,n}} 
       + \mathcal{C}_8^k\Delta t,
       \end{align}
       finishing the proof.
     \end{proof}

{\begin{theorem}[Convergence]\label{convergence}
Assume that (\textbf{H1})--(\textbf{H3}) hold. As $\Delta x \rightarrow 0$, the approximations $\boldsymbol{U}^{\D}$ 
generated by the  marching formula \eqref{scheme2} converge in $(L^1_{\loc}(\overline{Q}_T))^N$ and pointwise a.e.~in $\overline{Q}_T$ to the entropy solution 
	$\boldsymbol{U}^{\D}\in (L^{\infty}({\overline{Q}_T}))^N \cap (C([0,T];L^1(\R{;[0,1]})))^N$ of the Cauchy problem \eqref{nlm}-\eqref{init}
  with initial data $\boldsymbol{U}_0 \in ((L^1 \cap \operatorname{BV})(\R{;[0,1]}))^N$.
\end{theorem}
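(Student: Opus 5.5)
The plan follows the standard compactness and Lax--Wendroff route used for nonlocal conservation laws, relying on Lemma~\ref{stability}, and then uses the uniqueness theorem of \S\ref{uni} to upgrade subsequential convergence to convergence of the whole family.

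\emph{Compactness.} For each $k\in\mathcal{N}$, Lemma~\ref{stability}\ref{Pos} gives $0\le U^{k,\Delta}\le 1$, and Lemma~\ref{stability}\ref{lem:L1} gives a uniform $L^1$ bound. Lemma~\ref{stability}\ref{lem:BV} gives a bound on $\TV(U^{k,\Delta}(t,\cdot))$ uniform in $\Delta x$ for $t\in[0,T]$, since $\TV(U^{k,\Delta}_0)\le \TV(U^k_0)$. Lemma~\ref{stability}\ref{lem:L1t} gives $L^1$-Lipschitz continuity in time up to $\mathcal{O}(\Delta t)$. With these, Kolmogorov's compactness theorem (or Helly's theorem in $x$ combined with a diagonal argument in $t$) extracts a subsequence with $\boldsymbol{U}^{\Delta}\to\boldsymbol{U}$ in $(L^1_{\loc}(\overline{Q}_T))^N$ and a.e. The limit lies in $C([0,T];L^1(\R;[0,1]))\cap L^\infty([0,T];\bv(\R))$ by lower semicontinuity of the total variation and the time estimate.

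\emph{Convergence of the nonlocal terms.} Let $\boldsymbol{c}^{\Delta}$ be the piecewise constant interpolant of $\boldsymbol{c}^{k,n}_{i+1/2}$. I will show $\boldsymbol{c}^{\Delta}\to(\boldsymbol{\Theta}\circledast\boldsymbol{U})^k$ uniformly on compacts of $\overline{Q}_T$. The error splits into three parts. First, replacing $\Theta$ by its cell averages costs $\mathcal{O}(\Delta x+\Delta t)$ by the $W^{1,\infty}$ regularity in \ref{H3A}, multiplied by the $L^1$ bound. Second, shifting the evaluation point $(t,x)$ to the grid point costs $\mathcal{O}(\Delta)$ by Lipschitz continuity of $\mu,\Gamma$ and the $L^\infty$ bound. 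Third, the term $\int_0^t\int\Theta\,(U^{\Delta}-U)$ tends to $0$. This last point needs tail control: uniform $L^1$ bounds together with $L^1_{\loc}$ convergence do not immediately give $L^1$ convergence on $\R$. I would handle it either with a uniform tightness estimate, obtained from finite propagation speed of the scheme and $U_0\in L^1$, or by noting that $\Theta$ is bounded and that conservation of mass together with local convergence gives convergence in $L^1$ (a Brezis--Lieb/Scheffé-type argument, since $\int U^{k,\Delta}=\int U^k_0$ and the limit has the same mass). The same argument with $\dot\mu$ in place of $\mu$ handles the discrete $x$-differences, which by \eqref{eq:c2}--\eqref{eq:c3} are uniformly Lipschitz. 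Hence $\nu^k(\boldsymbol{c}^{\Delta})\to\nu^k((\boldsymbol{\Theta}\circledast\boldsymbol{U})^k)$ and the discrete derivative $(\nu^k(\boldsymbol{c}_{i+1/2})-\nu^k(\boldsymbol{c}_{i-1/2}))/\Delta x\to\partial_x\nu^k(\cdots)$, boundedly and a.e.

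\emph{Lax--Wendroff step.} Multiply the discrete entropy inequality of Lemma~\ref{stability}\ref{lem:entropy} by $\Delta x\,\phi(t^n,x_i)$ with $\phi\ge0$, sum over $i$ and $n$, and perform summation by parts in both variables. The $|U-\alpha|$ terms converge to the first two integrals of \eqref{kruz2}. For the flux term, I would write $G^{k,n}_{i+1/2}(U_i,U_{i+1})$ as $\sgn(U_i-\alpha)\nu^k(\boldsymbol{c}_{i+1/2})(f^k(U_i)-f^k(\alpha))$ plus errors bounded by $C|U_{i+1}-U_i|$. Their contribution is $\mathcal{O}(\Delta x)\,\TV$, which vanishes. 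The source term $\lambda\sgn(U^{k,n+1}_i-\alpha)f^k(\alpha)\Delta_x\nu^k$ converges by the previous step. One subtlety is that it carries $\sgn$ at time $n+1$ and not $n$. The time estimate controls this mismatch, together with a.e. convergence of $\sgn(U^{\Delta}-\alpha)$ for all but countably many $\alpha$, extended to all $\alpha$ by continuity of \eqref{kruz2} in $\alpha$. The result is that $\boldsymbol{U}$ satisfies Definition~\ref{def:entropy}.

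Finally, uniqueness of entropy solutions (the theorem in \S\ref{uni}, via Gronwall) shows that every subsequence has the same limit, so the whole family converges. I expect the main obstacle to be the second step, namely the uniform convergence of the space--time convolutions and of their discrete $x$-derivatives on unbounded $\R$ with memory over all of $[0,t]$. This requires the tightness/mass argument above, carried out so that it is uniform in $n$.
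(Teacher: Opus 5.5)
Your proposal is correct and follows the paper's route. The paper obtains a subsequence converging in $L^1_{\loc}$ from the uniform $\TV$ bound, the time estimate and Helly's theorem, and then invokes the Lax--Wendroff argument of \cite[Thm.~5.1]{BBKT2011}. You additionally make explicit what the paper delegates to that citation or leaves implicit: tail control of the convolutions on $\R$, the time-shifted $\sgn(U^{k,n+1}_i-\alpha)$, exceptional levels $\alpha$, and the use of uniqueness from \S\ref{uni} to get convergence of the whole family.

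Two small caveats. The Scheff\'e alternative still needs a uniform weighted-mass (tightness) bound to show that the limit keeps its mass. Also, \eqref{kruz2} is not literally continuous in $\alpha$, since the source term can jump across $\{U^k=\alpha\}$; for exceptional $\alpha$, take one-sided limits $\alpha_m\uparrow\alpha$ and $\alpha_m\downarrow\alpha$ and average the two resulting inequalities.
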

\begin{proof}
Lemma \ref{lem:BV} implies that the sequence of functions $\boldsymbol{U}^{\D}(t,\cdot)$ is uniformly total variation bounded. Owing to the time estimate (cf.~Lemma \ref{lem:L1t}) and Helly's theorem (see \cite[Cor.~A.10]{HR2015}) there exists $\boldsymbol{U} \in (L^{\infty}([0,T]; \operatorname{BV}(\R)))^N \cap (C([0,T];L^1(\R{;[0,1]})))^N$ such that up to a subsequence ${\boldsymbol{U}^{\D}} \rightarrow \boldsymbol{U}$ in $(L^1_{\loc}(Q_T))^N.$ 
A Lax-Wendroff type argument presented in  \cite[Thm.~5.1]{BBKT2011} implies that the limit $U^k$ indeed satisfies the entropy condition def.~\eqref{kruz2}. 

\end{proof}

}
The above theorems imply that the entropy solution satisfies the following regularity estimates.
\begin{corollary}[Regularity of the entropy solution]\label{regu}Assume that (\textbf{H1})--(\textbf{H3}) hold.
For $0 <t \leq T$ and $U_{0}^k\in (L^1\cap\bv) (\R;[0,1]),$ the entropy solution $U^k$ of the IVP \eqref{nlm}--\eqref{init} satisfies the following:
\begin{align*}
\norma{U^k(t,\dott)}_{L^{\infty}(\R)}&\leq 1,\\
\norma{U^k(t,\dott)}_{L^1(\R)}&{\color{black}=}\norma{U_{0}^k}_{L^1(\R)},\\ 
\TV(U^k(t,\dott)) &\leq  \left(
     \exp(\mathcal{C}_7^kt)\sum\limits_{i\in\Z}\abs{\TV(U_{0}^k)}  + \frac{\exp(\mathcal{C}_7^kt)-1}{\mathcal{C}_7^k}\mathcal{C}_8^k\right),\\ 
\norma{U^k(t_2,\dott)-U^k(t_1,\dott)}_{L^1(\R)} &\leq \mathcal{C}_9^k\abs{t_2-t_1}, \text{ where } 0\leq t_1,t_2 \leq T.
\end{align*}
\end{corollary}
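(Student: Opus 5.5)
The plan is to pass each discrete estimate of Lemma~\ref{stability} to the limit along the convergent sequence from Theorem~\ref{convergence}. Uniqueness then lets us attribute the bounds to \emph{the} entropy solution. By the Uniqueness Theorem, the entropy solution $\boldsymbol{U}$ with datum $\boldsymbol{U}_0$ is unique. Hence the whole family $\boldsymbol{U}^{\Delta}$, not just a subsequence, converges to it. The convergence holds in $(L^1_{\loc}(\overline{Q}_T))^N$ and pointwise a.e. Moreover, the time estimate Lemma~\ref{stability}\ref{lem:L1t} together with Helly's theorem upgrades this to $U^{k,\Delta}(t,\cdot)\to U^k(t,\cdot)$ in $L^1_{\loc}(\R)$ for every $t\in[0,T]$, via the standard equicontinuity argument. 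We fix such $t$.

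\emph{Step 1 ($L^\infty$ and $L^1$).} The invariant region principle Lemma~\ref{stability}\ref{Pos} gives $0\le U^{k,\Delta}\le1$. Pointwise a.e.\ convergence then yields $\norma{U^k(t,\cdot)}_{L^\infty(\R)}\le 1$. For the $L^1$ bound, Fatou and Lemma~\ref{stability}\ref{lem:L1} give $\norma{U^k(t)}_{L^1}\le \norma{U^{k,\Delta}_0}_{L^1}\le\norma{U^k_0}_{L^1}$. The projection of the data onto cell averages is $L^1$-contractive and, since $U_0^k\ge0$, mass preserving. For equality, note that nonnegativity gives $\int U^{k}=\lim\int U^{k,\Delta}$ once we have tightness. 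Tightness follows from finite speed of propagation: the flux speed is bounded by $\abs{f^k}_{\lip}\norma{\nu^k}_{L^\infty}$ plus the bounded source $\partial_x\nu^k$, and the scheme's domain of dependence is bounded by $\lambda^{-1}$ per step. Alternatively, one can integrate the entropy inequality \eqref{kruz2} with $\alpha=0$ and with $\alpha=1$, using test functions $\phi$ approximating $\mathbbm{1}_{[0,t]}$. Since $U^k\in[0,1]$, $f^k(0)=0$ and $f^k(1)=0$, the source terms vanish and one gets $\frac{d}{dt}\int U^k=0$ in the weak sense. I prefer this second route since it avoids tightness arguments.

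\emph{Step 2 (TV and Lipschitz in time).} Lower semicontinuity of total variation under $L^1_{\loc}$ convergence gives $\TV(U^k(t))\le\liminf \sum_i|U^{k,n}_{i+1}-U^{k,n}_i|$ with $t^n\to t$. Lemma~\ref{stability}\ref{lem:BV} then bounds the right side, using $\sum_i|U^{k,0}_{i+1}-U^{k,0}_i|\le\TV(U_0^k)$ (averaging does not increase variation) and $\exp(\mathcal{C}_7^k t^n)\to\exp(\mathcal{C}_7^kt)$. Note that $\mathcal{C}_7^k$ and $\mathcal{C}_8^k$ are independent of $\Delta$. For the time estimate, take $t_1<t_2$ and $n,m$ with $t^n\to t_1$, $t^m\to t_2$. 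Lemma~\ref{stability}\ref{lem:L1t} gives $\norma{U^{k,\Delta}(t^m)-U^{k,\Delta}(t^n)}_{L^1}\le\mathcal{C}_9^k(t^m-t^n)$. Restricting to a compact set, passing to the limit, and then letting the compact set exhaust $\R$ gives the claim. The constant $\mathcal{C}_9^k$ depends on the discrete TV, which is uniformly bounded by the previous bound.

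The main obstacle is Step 1's conservation equality. Fatou only gives an inequality, so mass loss at infinity must be excluded. The entropy-inequality route handles this, but it requires a careful choice of test functions $\phi(t,x)=\psi_\varepsilon(t)\,\eta_R(x)$ and a check that the flux term with $\phi_x$ vanishes as $R\to\infty$. That vanishing uses $U^k\in L^1$ and the boundedness of $f^k$ and $\nu^k$.
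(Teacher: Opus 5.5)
Your proposal is correct and follows the same route as the paper, which obtains the corollary simply by passing the discrete bounds of Lemma~\ref{stability} to the limit along the approximations of Theorem~\ref{convergence}, with uniqueness identifying the limit. Your extra argument for the mass identity correctly supplies a step the paper leaves implicit, since $L^1_{\loc}$ convergence plus Fatou only gives $\norma{U^k(t,\dott)}_{L^1(\R)}\le\norma{U_0^k}_{L^1(\R)}$: you test \eqref{kruz2} with $\alpha=0$ and $\alpha=1$ (where $f^k(0)=f^k(1)=0$ removes the source terms) to get the weak formulation, and then remove the spatial cut-offs.
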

\begin{remark}\normalfont
  The above discussions in \S\ref{uni} and \S\ref{num} show that entropy solutions of the IVP generate a Lipschitz continuous solution operator $S_t$ on $(\bv(\R))^N$. It is to be noted that the solution operator is not a semigroup in the usual sense because $S_{t+s}(u_0) \neq S_t(S_s(u_0))$. Also, the $\bv$ bounds for $\boldsymbol{U}$ depend solely on $\|\Gamma^{j,k}\|_{L^1(\mathbb{R})}$ and are independent of $\|\Gamma^{j,k}\|_{L^{\infty}(\mathbb{R})}$. In the sequel we invoke this fact to study the memory-to-memoryless Dynamics
\end{remark}
\section{Memory-to-Memoryless Dynamics and Asymptotic Compatibility}\label{NLL}
 In this section, we study the limiting behavior of  the entropy solution of the ``space-time" nonlocal conservation law \eqref{nlm}-\eqref{init} as the kernel $\boldsymbol{\Gamma}$ converges to the Dirac delta distribution. We establish the following results:
\begin{enumerate}[(i)]
\item Theorem~\ref{con} shows that  the entropy solution of \eqref{nlm}--\eqref{init} converges strongly as $\delta \to 0^+$ to the entropy solution of the corresponding ``nonlocal-space'' conservation law \eqref{nls}, with initial data \eqref{init}.
\item Theorem~\ref{rate} shows that the finite volume approximation to \eqref{nlm}--\eqref{init} obtained in \S\ref{num} is asymptotically compatible with the above passage.
\end{enumerate}
This justifies the transition from the model with memory and its numerical approximation to its memoryless counterpart. Moreover, we establish explicit convergence rate estimates that quantify how fast this passage occurs as $\boldsymbol{\Gamma}$ concentrates at the origin. 

 We first introduce some notations and recall a lemma known as the relative entropy estimate for nonlocal conservation laws which is crucial to prove the error estimate. Let $j,k\in\mathcal{N}$ and $T>0$ be the final time. Let $\boldsymbol{\Gamma}$ satisfy \ref{H2A} such that $\Gamma^{j,k}$ has a support in a subset of $ [0,1],$ with $\displaystyle\int_{\R^+}\Gamma^{j,k}(\tau) \d \tau =1$. Further, for any $t>0,$ and for any $\delta>0$, we define,  
$\displaystyle\Gamma_{\delta}^{j,k}(t):=\frac{1}{\delta}\Gamma^{j,k}\left(\frac{t}{\delta}\right).$
We define $\Phi: \overline{Q}_T^2 \rightarrow \R$ by  
$\Phi(t,x,s,y):=\Phi^{\epsilon,\epsilon_0}(t,x,s,y)=\omega_{\epsilon}(x-y)\omega_{{\epsilon}_0}(t-s),$
where $\omega_a(x)=\displaystyle\frac{1}{a}\omega\left(\frac{x}{a}\right),$ $a>0$ and $\omega$ is a standard symmetric mollifier with $\operatorname{supp} (\omega) \subseteq [-1,1].$ In addition, we assume that $\displaystyle\int_\R \omega_a(x) \d x =1$ and $\displaystyle\int_\R\abs{\omega'_a(x)} \d x =\frac{1}{a}.$ Now, it is straightforward to see that $\Phi$ is symmetric and
$\Phi_x=\omega'_{{\epsilon}}(x-y)\omega_{\epsilon_0}(t-s)=-\Phi_y,  \Phi_t=\omega_{\epsilon}(x-y)\omega'_{{\epsilon}_0}(t-s)=-\Phi_s$.
For $a,b\in\R, \boldsymbol{Z}, \overline{\boldsymbol{Z}}\in (L^1(Q_T))^N,\phi \in C_c^{\infty}(\overline{Q}_T), \alpha \in \R$ and for $\epsilon,\epsilon_0>0$, let,
 \begin{align*}
 G^k(a,b)&:=\sgn (a-b) (f^k(b)-f^k(a)),\\
    \Lambda^k_T(Z^k,\phi,\alpha)&:= \int_{Q_T}\Big( |Z^k-\alpha|\phi_{t}+ \nu^k((\boldsymbol{Z}\circledast \boldsymbol{\mu})^k)G(Z^k,\alpha)\phi_{x}
     \\
     & \qquad  \quad  -  \sgn (Z^k-\alpha) f^k(\alpha) \partial_x(\nu^k((\boldsymbol{Z}\circledast \boldsymbol{\mu})^k)\phi\Big) \d t \d x\nonumber \\ & \quad  -\int_{\R}|Z^k(T,x)-\alpha|\phi(T,x)\d x +\int_{\R}|Z^k(0,x)-\alpha|\phi(0,x)\d x, \\
\Lambda^k_{\epsilon,\epsilon_0}(Z^k, \overline{Z}^k)&:=\int_{Q_T}\Lambda^k_T(Z^k,\Phi(\dott,\dott,s,y), \overline{Z}^k(s,y))\d y \d s,
    \\
 \gamma ({Z}^k,\sigma)&:=\sup_{\substack{
    \abs{t_1-t_2} \leq \sigma\\  0\leq t_1< t_2 \leq T }} \norma{{Z}^k(t_1,\dott)-{Z}^k(t_2,\dott)}_{L^1(\R)}.
  \end{align*}
An estimate on the errors $\norma{\boldsymbol{U}_{\delta}(T)-\boldsymbol{U}(T)}_{(L^1(\R))^N}$ and $\norma{\boldsymbol{U}_{\delta}^{\D}(T)-\boldsymbol{U}(T)}_{(L^1(\R))^N}$ would be achieved by estimating these differences in terms of $\Lambda^k$, which is the relative entropy functional wrt the target conservation law \eqref{nls}. To this end, we recall the relative entropy estimate for  “nonlocal-space” conservation law(see \cite[Lemma~3.3]{AHV2023}):
\begin{lemma}\label{lemma:kuz}[Relative entropy estimate]
Let $\boldsymbol{U}$ be the entropy solution  conservation law with memory \eqref{nlm}-\eqref{init} and
let $V$ belong to the set $\mathcal{K} := \Bigl\{ V : Q_T \to \mathbb{R}^N : 
\norma{V}_{(L^\infty(Q_T))^N} + \abs{V}_{(L^\infty_t {\bv}_x)^N} < \infty \Bigr\}.$ Then, the following estimate holds:
\begin{align}\label{kuz1}
\norma{\boldsymbol{U}(T,\dott)-\boldsymbol{V}(T,\dott)}_{(L^1(\R))^N}&\le\mathcal{C}_{10}\left(-\sum\limits_{k\in \mathcal{N}}\Lambda^k_{\epsilon,\epsilon_0}(V^k,U^k)+\sum\limits_{k\in \mathcal{N}}\gamma(V^k,{\epsilon_0})+\epsilon+\epsilon_0 \right), 
\end{align}
  where
  \begin{align*}
\mathcal{C}_{10}=\mathcal{C}_{10}(\boldsymbol{f},\boldsymbol{\mu}, \boldsymbol{\nu}, \norma{\boldsymbol{U}}_{(L^1(Q_T))N},\norma{\boldsymbol{V}}_{(L^1(Q_T))^N},|\boldsymbol{U}|_{(L^\infty_t \operatorname{BV}_x)^N},|\boldsymbol{V}|_{(L^\infty_t \operatorname{BV}_x)^N},|\boldsymbol{U}|_{(L^\infty_t \operatorname{BV}_x)^N},T)
  \end{align*}
and is independent of $\epsilon,\epsilon_0$.
\end{lemma}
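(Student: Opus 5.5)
We would follow Kuznetsov's doubling-of-variables argument, as in \cite[Lemma~3.3]{AHV2023}, treating the nonlocal velocities as given Lipschitz coefficients and closing with Gronwall's inequality. Although the statement calls $\boldsymbol{U}$ the solution with memory, $\Lambda^k$ is built on the ``nonlocal-space'' flux $\nu^k((\boldsymbol{Z}\circledast\boldsymbol{\mu})^k)$. We therefore take $\boldsymbol{U}$ to be the entropy solution of \eqref{nls},\eqref{init}, so that it satisfies the Kruzhkov inequality of Definition~\ref{def:entropy} with $\boldsymbol{\Theta}$ replaced by $\boldsymbol{\mu}$. By Corollary~\ref{regu} (with $\boldsymbol{\Gamma}=\delta_0$), $\boldsymbol{U}$ is bounded by $1$, has bounded total variation, and is Lipschitz in time with values in $L^1$.

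\textbf{Step 1: doubling of variables.} Fix $k\in\mathcal{N}$ and $t\in(0,T]$. Write the entropy inequality for $U^k(s,y)$ with test function $\Phi(t,x,\cdot,\cdot)$, restricted by time cut-offs to $[0,t]$, and constant $\alpha=V^k(t,x)$. Integrate it over $(t,x)$ and add $\Lambda^k_{\epsilon,\epsilon_0}(V^k,U^k)$, which is the same expression with the roles of the variables exchanged.
\begin{itemize}
\item Since $\Phi_t=-\Phi_s$, $\Phi_x=-\Phi_y$ and $G^k$ is symmetric, the $\phi_t$ and $\phi_s$ terms cancel.
\item The $\phi_x$ terms combine into $\int G^k(V^k,U^k)\,\Phi_x\,\bigl[\nu^k((\boldsymbol{V}\circledast\boldsymbol{\mu})^k)(t,x)-\nu^k((\boldsymbol{U}\circledast\boldsymbol{\mu})^k)(s,y)\bigr]$.
\item The source terms combine into differences of $f^k\,\partial_x\nu^k$ evaluated at $(t,x)$ and at $(s,y)$.
\end{itemize}

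\textbf{Step 2: splitting the flux difference.} We split the bracket as
\[
\bigl[\nu^k((\boldsymbol{V}\circledast\boldsymbol{\mu})^k)(t,x)-\nu^k((\boldsymbol{U}\circledast\boldsymbol{\mu})^k)(t,x)\bigr]+\bigl[\nu^k((\boldsymbol{U}\circledast\boldsymbol{\mu})^k)(t,x)-\nu^k((\boldsymbol{U}\circledast\boldsymbol{\mu})^k)(s,y)\bigr].
\]
\begin{itemize}
\item For the first bracket, the Lipschitz bound on $\boldsymbol{\nu}$ and $\norma{\boldsymbol{\mu}}_\infty$ give a bound $\le C\sum_j\norma{U^j(t)-V^j(t)}_{L^1}$. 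Before estimating, we move the $x$-derivative off $\Phi$ by integrating by parts in $x$. The resulting $G^k$ factor is then controlled by $|V^k|_{\bv}$, and the $x$-derivative of the bracket by $\norma{\boldsymbol{\mu}'}_\infty$.
\item For the second bracket, smoothness of $\boldsymbol{\mu}$ and the time-Lipschitz bound on $\boldsymbol{U}$ give $O(|x-y|+|t-s|)$. Since $\operatorname{supp}\Phi$ forces $|x-y|\le\epsilon$ and $|t-s|\le\epsilon_0$, this bracket contributes $O(\epsilon+\epsilon_0)$.
\item The source terms are handled the same way, using $\operatorname{Hess}\boldsymbol{\nu}$ and $\boldsymbol{\mu}''$, which is where (\textbf{H2})--(\textbf{H3}) enter.
\end{itemize}

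\textbf{Step 3: boundary terms in time.} We treat the terms at times $0$ and $t$ by the standard Kuznetsov lemma.
\begin{itemize}
\item Replacing $\int\!\!\int|V^k(t,x)-U^k(s,y)|\,\Phi$ by $\norma{V^k(t)-U^k(t)}_{L^1}$ costs $\gamma(V^k,\epsilon_0)+\gamma(U^k,\epsilon_0)+\epsilon\,(|U^k|_{\bv}+|V^k|_{\bv})$.
\item By Corollary~\ref{regu}, $\gamma(U^k,\epsilon_0)\le\mathcal{C}_9^k\epsilon_0$.
\item The initial-time terms are already contained in $\Lambda^k_{\epsilon,\epsilon_0}$.
\end{itemize}
Summing over $k$ then yields, for every $t\in(0,T]$,
\[
\norma{\boldsymbol{U}(t)-\boldsymbol{V}(t)}_{(L^1)^N}\le -\sum_k\Lambda^k_{\epsilon,\epsilon_0}(V^k,U^k)+\sum_k\gamma(V^k,\epsilon_0)+C(\epsilon+\epsilon_0)+C\int_0^t\norma{\boldsymbol{U}(\tau)-\boldsymbol{V}(\tau)}_{(L^1)^N}\,d\tau .
\]

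\textbf{Main obstacle.} The hard part is that the last inequality must hold at every intermediate time $t$, not only at $T$, so that Gronwall's inequality can be applied and produces $\mathcal{C}_{10}=e^{CT}(\cdots)$. For this, the relative entropy functional with final time $t$ has to be controlled by the one with final time $T$, up to $O(\epsilon_0)$ and $\gamma$ terms. We would first try to obtain this by inserting a time cut-off $\mathbbm{1}_{[0,t]}$ mollified at scale $\epsilon_0$, and using that the $L^1$ time-continuity of both $\boldsymbol{U}$ and $\boldsymbol{V}$ absorbs the cut-off error. If this fails, the fallback is to assume the required sign structure, namely $-\Lambda_t\le-\Lambda_T+O(\epsilon_0)$, as is done in the cited local results.
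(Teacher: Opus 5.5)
Your route (Kuznetsov doubling of variables, with the nonlocal velocities frozen as Lipschitz coefficients and the argument closed by Gronwall) is the one behind the cited \cite[Lemma~3.3]{AHV2023}; the paper gives no proof of its own. Reading $\boldsymbol U$ as the entropy solution of \eqref{nls},\eqref{init} is correct. Its regularity should be taken from \cite{ACG2015,AHV2023_1}, not from Corollary~\ref{regu}, which does not cover $\boldsymbol\Gamma=\delta_0$. However, the argument has two genuine gaps, plus a slip in Step 3.

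\textbf{Step 2, second bracket.} This bracket multiplies $\Phi_x$, whose $x$-mass is $1/\epsilon$. Moreover, $\int_{\R}\abs{\omega_\epsilon'(z)}\,\abs{z}\,\d{z}=\int_{\R}\abs{\omega'(w)}\,\abs{w}\,\d{w}$ is of order one. So the bound $O(\abs{x-y}+\abs{t-s})$ gives a contribution $O(1)+O(\epsilon_0/\epsilon)$, not $O(\epsilon+\epsilon_0)$.

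The source terms are not a difference of one function at two points either. With $a=\nu^k((\boldsymbol V\circledast\boldsymbol\mu)^k)$ and $b=\nu^k((\boldsymbol U\circledast\boldsymbol\mu)^k)$ they read $-\sgn(V^k-U^k)\bigl[f^k(U^k(s,y))\,\partial_x a(t,x)-f^k(V^k(t,x))\,\partial_y b(s,y)\bigr]\Phi$. This contains the order-one piece $\sgn(V^k-U^k)\bigl(f^k(V^k)-f^k(U^k)\bigr)\partial_y b\,\Phi$.

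The fix is to integrate by parts in $x$ in this bracket as well, at a cost of $O(\epsilon+\epsilon_0)\abs{V^k}_{\bv}$. That leaves the term $\iint G^k(V^k,U^k)\,\partial_x b(t,x)\,\Phi$, which can be handled in either of two ways:
\begin{itemize}
\item it cancels the order-one source piece, by the mechanism of \cite{KR2003,BBKT2011};
\item or, since both it and that piece are bounded by $C\iint\abs{V^k(t,x)-U^k(s,y)}\,\Phi$, both go into the Gronwall term $C\int_0^T\norma{\boldsymbol U(\tau)-\boldsymbol V(\tau)}_{(L^1(\R))^N}\d{\tau}$, up to $\gamma$- and $\epsilon\abs{\cdot}_{\bv}$-errors.
\end{itemize}

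\textbf{Closing the argument.} The obstacle you flag is real, and neither of your devices removes it. A cut-off at time $t$ produces the functional built on $Q_t$; call it $\Lambda^{k,t}_{\epsilon,\epsilon_0}$. The difference $\Lambda^{k}_{\epsilon,\epsilon_0}-\Lambda^{k,t}_{\epsilon,\epsilon_0}$ is the entropy residual of $V^k$ on $(t,T)$ tested against $\Phi$. For a general $V\in\mathcal K$ this residual has no sign and need not be small, and the time continuity of $\boldsymbol U,\boldsymbol V$ says nothing about it. Your fallback is an extra hypothesis that changes the statement.

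What the argument actually proves is the estimate with $\sup_{0<t\le T}\bigl(-\sum_k\Lambda^{k,t}_{\epsilon,\epsilon_0}(V^k,U^k)\bigr)$ in place of $-\sum_k\Lambda^k_{\epsilon,\epsilon_0}$, with constant $e^{CT}$. This is enough for Theorems~\ref{con} and~\ref{rate}, because the bounds on $-\Lambda^k$ proved there hold for every final time.

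\textbf{Step 3, initial-time terms.} These are not absorbed into $\Lambda^k_{\epsilon,\epsilon_0}$. The term $\iint\abs{U^k_0(y)-V^k(t,x)}\,\Phi(t,x,0,y)$ coming from the entropy inequality of $U^k$ survives on the right. Together with the $t=0$ boundary term, it yields $\norma{\boldsymbol U_0-\boldsymbol V(0)}_{(L^1(\R))^N}$.

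The displayed estimate therefore needs $\boldsymbol V(0)=\boldsymbol U_0$. To see this, take $f^k\equiv0$ and $\boldsymbol V\equiv\boldsymbol V_0\neq\boldsymbol U_0$. Then $\Lambda^k_{\epsilon,\epsilon_0}(V^k,U^k)=0=\gamma(V^k,\epsilon_0)$, so the estimate would force $\boldsymbol U_0=\boldsymbol V_0$. This is why Theorem~\ref{rate} separately uses $\norma{\boldsymbol U^\Delta_0-\boldsymbol U_0}_{(L^1(\R))^N}=\mathcal O(\Delta x)$.
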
  Finally, we state and prove the main theorems of this section:
    \begin{theorem}\label{con}
    Let $\boldsymbol{U}$ be the entropy solution of the IVP \eqref{nls},\eqref{init} in the sense of \cite[Def 2.1]{ACG2015}, and let $\boldsymbol{U}_{\delta}$ be the entropy solution of the IVP \eqref{nlm}-\eqref{init} (cf.~Def.~\ref{def:entropy}) with $\Theta^{j,k}=\Theta^{j,k}_{\delta}:=\mu^{j,k}\Gamma^{j,k}_{\delta}$.
    Then, as $\delta \rightarrow 0,$ $\boldsymbol{U}_{\delta}$  converges to the entropy solution of \eqref{nls},\eqref{init} in $L^1$ norm and satisfies the following error estimate:
\begin{align*}
\norma{\boldsymbol{U}_{\delta}(T)-\boldsymbol{U}(T)}_{(L^1(\R))^N}=\mathcal{O}(\sqrt{\delta}).
\end{align*}
\end{theorem}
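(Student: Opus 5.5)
The plan is to apply the relative entropy estimate, Lemma~\ref{lemma:kuz}, with $\boldsymbol{U}$ the entropy solution of the memoryless problem \eqref{nls},\eqref{init} and $\boldsymbol{V}=\boldsymbol{U}_\delta$. The lemma needs $\boldsymbol{U}_\delta\in\mathcal{K}$ with bounds uniform in $\delta$. This is where the remark after Corollary~\ref{regu} enters. Since $\norma{\Gamma^{j,k}_\delta}_{L^1(\R^+)}=1$ for every $\delta$, the constants $\mathcal{C}_5^k,\dots,\mathcal{C}_9^k$ of Lemma~\ref{stability} do not depend on $\delta$. Corollary~\ref{regu} then gives $\delta$-independent bounds on $\norma{U^k_\delta}_{L^\infty}$, $\norma{U^k_\delta(t)}_{L^1}$, $\TV(U^k_\delta(t))$ and $|U^k_\delta|_{\lip_tL^1_x}$. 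Consequently $\mathcal{C}_{10}$ is independent of $\delta$, and $\gamma(U^k_\delta,\epsilon_0)\le \mathcal{C}_9^k\epsilon_0$. It remains to bound $-\Lambda^k_{\epsilon,\epsilon_0}(U^k_\delta,U^k)$.

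For each fixed $(s,y)$, take $\phi=\Phi(\cdot,\cdot,s,y)$ and $\alpha=U^k(s,y)$ in the entropy inequality \eqref{kruz2} satisfied by $U^k_\delta$, with $\Theta$ replaced by $\Theta_\delta$. The functional $\Lambda^k_T$ is built with the flux $\nu^k((\boldsymbol{\mu}\circledast\boldsymbol{U}_\delta)^k)$, whereas $U^k_\delta$ satisfies its inequality with $\nu^k((\boldsymbol{\Theta}_\delta\circledast\boldsymbol{U}_\delta)^k)$. Subtracting the two leaves two terms. The first is
\[
\int\!\!\int |G^k(U^k_\delta,\alpha)|\,\bigl|\nu^k(\boldsymbol{\mu}\circledast\boldsymbol{U}_\delta)-\nu^k(\boldsymbol{\Theta}_\delta\circledast\boldsymbol{U}_\delta)\bigr|\,|\Phi_x|,
\]
and the second is the analogous term with $f^k(\alpha)\,\partial_x(\cdot)$ and $\Phi$. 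The boundary terms at $t=T$ and $t=0$ are handled as in the standard Kuznetsov argument and are absorbed into $\gamma$ and $\epsilon_0$.

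The key step is to estimate, for $Z=\mu^{j,k}$ or $\dot\mu^{j,k}$, the quantity
\[
\Bigl|\int_\R Z(x-\xi)U^j_\delta(t,\xi)\,d\xi-\int_0^t\Gamma^{j,k}_\delta(t-\tau)\int_\R Z(x-\xi)U^j_\delta(\tau,\xi)\,d\xi\,d\tau\Bigr|.
\]
For $t\ge\delta$, the normalization $\int\Gamma_\delta=1$ and the support in $[0,\delta]$ let us write the difference as $\int_0^\delta\Gamma_\delta(\sigma)\int Z(x-\xi)\,(U^j_\delta(t,\xi)-U^j_\delta(t-\sigma,\xi))\,d\xi\,d\sigma$. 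This is at most $\norma{Z}_{L^\infty}\,\mathcal{C}_9^j\,\delta$ by the uniform Lipschitz-in-time bound. For $t<\delta$ the missing mass $\int_t^\delta\Gamma_\delta$ is $O(1)$, but it lives on a time layer of width $\delta$, so integrating over time contributes only $O(\delta)$.

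The derivative term $\partial_x\nu^k$ is treated the same way, using \ref{H2A}. The difference of products is split as in the uniqueness proof, with the Hessian of $\nu^k$ and $\dot\mu^{j,k}$. In the $\Phi_x$ term the factor $\int|\omega'_\epsilon|=1/\epsilon$ appears. However, after integrating by parts in $x$, or after using $|\Phi_x|$ integrated in $y$ against bounded quantities, we expect the cost to be $O(\delta/\epsilon)$ at worst, or $O(\delta)$ if the $\TV$ of $U_\delta$ is used instead of $\Phi_x$. Altogether this gives $-\Lambda^k_{\epsilon,\epsilon_0}\le C(\delta+\delta/\epsilon)$.

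Inserting everything into \eqref{kuz1} yields
\[
\norma{\boldsymbol{U}_\delta(T)-\boldsymbol{U}(T)}\le C\Bigl(\delta+\frac{\delta}{\epsilon}+\epsilon+\epsilon_0\Bigr).
\]
Choosing $\epsilon=\sqrt\delta$ and letting $\epsilon_0\to0$ gives $\mathcal{O}(\sqrt\delta)$. Strong convergence follows from the same bound.

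The main obstacle is controlling the $\Phi_x$ term uniformly: we need the $\delta$ flux-discrepancy bound in $L^\infty$ (or in $L^1$ against $|\omega'_\epsilon|$) without losing more than $1/\epsilon$. We also need to handle the initial layer $t<\delta$, where the memory kernel has not yet accumulated unit mass.
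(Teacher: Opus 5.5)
Your proposal is correct and follows the paper's proof. The common route is: apply Lemma~\ref{lemma:kuz} with $\boldsymbol{V}=\boldsymbol{U}_\delta$; use the $\delta$-uniform bounds that come from $\norma{\Gamma^{j,k}_\delta}_{L^1(\R^+)}=1$; subtract the two fluxes in the entropy inequality for $\boldsymbol{U}_\delta$; split time into $[0,\delta]$ and $[\delta,T]$, using the normalization and Lipschitz-in-time continuity, to get $-\Lambda^k_{\epsilon,\epsilon_0}(U^k_\delta,U^k)\le C(\delta/\epsilon+\delta)$; then take $\epsilon=\sqrt{\delta}$. The paper closes the $\Phi_x$ term you flagged by bounding $\abs{G^k}$ by $\abs{f^k}_{\lip(\R)}\bigl(\norma{U^k}_{L^\infty}+\norma{U^k_\delta}_{L^\infty}\bigr)$ and using the $L^1_{t,x}$ version of your convolution estimate (with $\norma{\mu^{j,k}}_{L^1}$ and $\norma{\dot{\mu}^{j,k}}_{L^1}$ in place of $L^\infty$ norms), and it chooses $\epsilon_0=\sqrt{\delta}$ instead of sending $\epsilon_0\to 0$, which makes no difference.
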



\begin{proof}

    {In view of the relative entropy estimate Lem.~\ref{lemma:kuz}, it is enough to estimate the relative entropy functional $-\Lambda^k_{\epsilon,\epsilon_0}(U^k_{\delta},U^k)$ and modulus of time continuity $\gamma(U_{\delta}^k,\epsilon_0)$.}
    We first prove that, for a fixed ${\delta}>0,$ and for any $\epsilon,\epsilon_0,$
    \begin{align*}
-\Lambda^k_{\epsilon,\epsilon_0}(U^k_{\delta},U^k) \leq \mathcal{C}_{11}^k \left({\delta}/{\epsilon} + {\delta} \right) \qquad \text{for all } k\in \mathcal{N},
\end{align*}
where $\mathcal{C}_{11}^k$ is given by \eqref{C}. Using the entropy inequality \eqref{kruz2} for $U^k_{\delta}$, we have,
\begin{align}\nonumber
&-\Lambda^k_{\epsilon,\epsilon_0}(U^k_{\delta},U^k) \\
 &\nonumber\le \int_{Q_T}\int_{Q_T}G^k(U^k_{\delta}(t,x),U^k(s,y))\nu^k((\boldsymbol{U}_{\delta}\circledast {\boldsymbol{\Theta}}_{\delta})^k)(t,x)) \Phi_x(s,y,t,x) \d t  \d x \d s \d y \\
 &\nonumber\quad - \int_{Q_T}\int_{Q_T}\sgn(U^k_{\delta}(t,x)-U^k(s,y))f^k(U^k(s,y))\partial_x(\nu^k((\boldsymbol{U}_{\delta}\circledast {\boldsymbol{\Theta}}_{\delta})^k)(t,x))) \\
 & \nonumber\qquad  \quad  \qquad \times 
 \Phi(s,y,t,x) \d t  \d x \d s \d y \\
 &\nonumber\quad-\int_{Q_T}\int_{Q_T}G^k(U^k_{\delta}(t,x),U^k(s,y))\nu^k((\boldsymbol{U}_{\delta}\circledast \boldsymbol{\mu})^k)(t,x)\Phi_x(s,y,t,x) \d t  \d x \d s \d y \\
 &\nonumber\quad + \int_{Q_T} \int_{\R}\sgn(U^k_{\delta}(t,x)-U^k(s,y))f^k(U^k(s,y))\partial_x(\nu^k((\boldsymbol{U}_{\delta}\circledast \boldsymbol{\mu})^k)(t,x))\Phi(s,y,t,x) \d t  \d x \d s \d y\\
 &\nonumber\le \int_{Q_T}\int_{Q_T} G^k(U^k_{\delta}(t,x),U^k(s,y))(\nu^k((\boldsymbol{U}_{\delta}\circledast {\boldsymbol{\Theta}}_{\delta})^k)(t,x))-\nu^k((\boldsymbol{U}_{\delta}\circledast \boldsymbol{\mu})^k)(t,x)\Phi_x(s,y,t,x) \d t  \d x \d s \d y \nonumber \\
 &\nonumber\quad +\int_{Q_T}\int_{Q_T}\sgn(U^k_{\delta}(t,x)-U^k(s,y))f^k(U^k(s,y)) 
 \nonumber \\&\nonumber
 \qquad \qquad\times(\partial_x(\nu^k((\boldsymbol{U}_{\delta}\circledast \boldsymbol{\mu})^k)(t,x))-\partial_x(\nu^k((\boldsymbol{U}_{\delta}\circledast {\boldsymbol{\Theta}}_{\delta})^k)(t,x)))) \Phi(s,y,t,x) \d t  \d x \d s \d y 
 \nonumber \\ &\nonumber \leq 
 \frac{1}{\epsilon}\abs{f^k}_{\lip(\R)} (\norma{U^k}_{L^{\infty}(Q_T)}+ \norma{U^k_{\delta}}_{L^{\infty}(Q_T)}) \abs{\nu^k}_{\lip(\R)} \int_{Q_T}\norma{(\boldsymbol{U}_{\delta}\circledast {\boldsymbol{\Theta}}_{\delta})^k(t,x)-(\boldsymbol{U}_{\delta}\circledast \boldsymbol{\mu})^k(t,x)} \d x \d t 
 \nonumber \\&\nonumber
 \quad + \abs{f^k}_{\lip(\R)} \norma{U^k}_{L^{\infty}(Q_T)} \int_{Q_T}\norma{\nabla\nu^k((\boldsymbol{U}_{\delta}\circledast {\boldsymbol{\Theta}}_{\delta})^k)(t,x)-\nabla\nu^k((\boldsymbol{U}_{\delta}\circledast \boldsymbol{\mu})^k)(t,x)} \norma{(\boldsymbol{U}_{\delta}\circledast \partial_x\boldsymbol{\mu})^k(t,x)} \d x \d t \nonumber\\
 &\nonumber
 \quad +  \abs{f^k}_{\lip(\R)} \norma{U^k}_{L^{\infty}(Q_T)} \int_{Q_T}  \norma{(\boldsymbol{U}_{\delta}\circledast \partial_x{\boldsymbol{\Theta}}_{\delta})^k(t,x)-(\boldsymbol{U}_{\delta}\circledast {\boldsymbol{\mu}}')^k(t,x)} \norma{\nabla\nu^k(\boldsymbol{U}_{\delta}\circledast {\boldsymbol{\Theta}}_{\delta})^k(t,x)} \d x \d t \nonumber
 \\ & :=\frac{1}{\epsilon}\abs{f^k}_{\lip(\R)} \abs{\nu^k}_{\lip(\R)}(\norma{U^k}_{L^{\infty}(Q_T)}+ \norma{U^k_{\delta}}_{L^{\infty}(Q_T)}) 
 I_1^k \nonumber \\&\quad +\abs{f^k}_{\lip(\R)} \norma{U^k}^2_{L^{\infty}(Q_T)} \norma{\boldsymbol{\mu}'}_{(L^1(\R))^{N^2}}
 \norma{\operatorname{Hess}\nu^k}_{(L^{\infty}(\R^N))^N} I_1^k\nonumber \\
  &\quad+\abs{f^k}_{\lip(\R)} \norma{U^k}_{L^{\infty}(Q_T)}  \norma{\nabla\nu^k}_{(L^{\infty}(\R^N))^N}I_2^k,\nonumber\\& \label{Gamma}
  \end{align}
   \vspace{-7mm}
 \begin{align*}
\text{where }\,I_1^k&:=\int_{Q_T}  \norma{(\boldsymbol{U}_{\delta}\circledast {\boldsymbol{\Theta}}_{\delta})^k(t,x)-(\boldsymbol{U}_{\delta}\circledast {\boldsymbol{\mu}})^k(t,x)} \d x \d t,\\
   I_2^k&:=\int_{Q_T}  \norma{(\boldsymbol{U}_{\delta}\circledast \partial_x{\boldsymbol{\Theta}}_{\delta})^k(t,x)-(\boldsymbol{U}_{\delta}\circledast {\boldsymbol{\mu}}')^k(t,x)} \d x \d t. 
\end{align*}
In what follows, we estimate the  terms $I_1^k$ and $I_2^k$.
Consider 
\begin{align*}
I_1^k&=\int_0^{\delta} \int_{\R}\norma{(\boldsymbol{U}_{\delta}\circledast {\boldsymbol{\Theta}}_{\delta})^k)(t,x)-(\boldsymbol{U}_{\delta}\circledast \boldsymbol{\mu})^k(t,x)} \d x \d t \\& \quad + \int_{\delta} ^T\int_{\R}\norma{(\boldsymbol{U}_{\delta}\circledast {\boldsymbol{\Theta}}_{\delta})^k(t,x)-(\boldsymbol{U}_{\delta}\circledast \boldsymbol{\mu})^k(t,x)} \d x \d t\\
&=\sum\limits_{j=1}^{N}\int_0^{\delta} \int_{\R}\Bigg| \int_0^t\int_{\R}U^j_{\delta}(\tau,\xi)\mu^{j,k}(x-\xi)\Gamma^{j,k}_{\delta}(t-\tau) \d \tau \d \xi  -  \int_{\R}U^j_{\delta}(t,\xi)\mu^{j,k}(x-\xi)  \d \xi \Bigg|  \d x \d t \\& \quad + \sum\limits_{j=1}^{N}\int_{\delta} ^T\int_{\R}\Bigg| \int_0^t\int_{\R}U^j_{\delta}(\tau,\xi)\mu^{j,k}(x-\xi)\Gamma^{j,k}_{\delta}(t-\tau) \d \tau \d \xi  -  \int_{\R}U^j_{\delta}(t,\xi)\mu^{j,k}(x-\xi)  \d \xi \Bigg|  \d x \d t
\end{align*}
Since $\displaystyle\int_0^{\delta} \Gamma^{j,k}_{\delta}(\tau)\d \tau =1$,   for every $j,k\in\mathcal{N},$ and for every 
$t\in [\delta,T], x\in \R$ we get,
\begin{align*}
\int_{\R}U^j_{\delta}(t,\xi)\mu^{j,k}(x-\xi)  \d \xi \d x &=\int_0^{t}\int_{\R}U^j_{\delta}(t,\xi)\mu^{j,k}(x-\xi) \Gamma^{j,k}_{\delta}(t-\tau)\d \tau \d \xi.
\end{align*}
Thus, 
\begin{align*}
I_1^k& 
\leq \sum\limits_{j\in\mathcal{N}}\int_0^{\delta} \int_{\R} \int_0^t\int_{\R}U^j_{\delta}(t-\tau,x-\xi)\mu^{j,k}(\xi) \Gamma^{j,k}_{\delta}(\tau)\d \tau \d \xi \d x\d t\\&\quad+\sum\limits_{j\in\mathcal{N}}\int_0^{\delta} \int_{\R} \int_{\R}U^j_{\delta}(t,x-\xi) \mu^{j,k}(\xi) \d \xi \d x\d t  \\
 & \quad + \sum\limits_{j\in\mathcal{N}}\int_{\delta}^T \int_{\R} 
 \int_0^{t}\int_{\R}
\abs{U^j_{\delta}(\tau,x-\xi)-U^j_{\delta}(t,x-\xi)}\mu^{j,k}(\xi) \Gamma^{j,k}_{\delta}(t-\tau)\d \tau \d \xi \d x \d t\\
& \leq 2\delta\norma{\boldsymbol{U}_{0}}_{(L^{1}(\R))^N} \norma{\boldsymbol{\mu}}_{(L^{1}(\R))^{N^2}}   +
{\delta}T\abs{\boldsymbol{U}_{\delta}}_{(\lip_t L^1_x)^N} \norma{\boldsymbol{\mu}}_{(L^{1}(\R))^{N^2}}:=\mathcal{C}_{12}{\delta}.
\end{align*}
\textit{Mutatis mutandis}, we have,
\begin{align*}
I_2^k&\leq 2\delta\norma{\boldsymbol{U}_{0}}_{(L^{1}(\R))^N} \norma{\boldsymbol{\mu'}}_{(L^{1}(\R))^{N^2}}  +
{\delta}T\abs{\boldsymbol{U}_{\delta}}_{(\lip_t L^1_x)^N} \norma{\boldsymbol{\mu}'}_{(L^{1}(\R))^{N^2}}:=\mathcal{C}_{13}{\delta}.
\end{align*}
Substituting the estimates on $I_1^k$ and $I_2^k$ in \eqref{Gamma}, we have,
\begin{align}\label{C}
\begin{split}
-\Lambda^k_{\epsilon,\epsilon_0}(U^k_{\delta},U^k)
&\leq \frac{{\delta}}{\epsilon}\abs{f^k}_{\lip(\R)} \abs{\nu^k}_{\lip(\R)}\left(\norma{U^k}_{L^{\infty}(Q_T)}+ \norma{U^k_{\delta}}_{L^{\infty}(Q_T)}\right) 
\mathcal{C}_{12}\\
&\qquad+{\delta}\abs{f^k}_{\lip(\R)} \norma{U^k}^2_{L^{\infty}(Q_T)} \norma{\boldsymbol{\mu}'}_{(L^1(\R))^{N^2}}
 \norma{\operatorname{Hess}\nu^k}_{(L^{\infty}(\R^N))^N}\mathcal{C}_{12}\\
  &\qquad+{\delta}\mathcal{C}_{13}\abs{f^k}_{\lip(\R)} \norma{U^k}_{L^{\infty}(Q_T)}  \norma{\nabla\nu^k}_{(L^{\infty}(\R^N))^N}:=\mathcal{C}_{11}^k\left(\displaystyle\frac{\delta}{\epsilon}+\delta\right),\end{split}
\end{align}
\vspace{-2mm}
 \begin{align*}
   \text{where} \, \, \mathcal{C}_{11}^k&:=\abs{f^k}_{\lip(\R)} \abs{\nu^k}_{\lip(\R)}\left(\norma{U^k}_{L^{\infty}(Q_T)}+ \norma{U^k_{\delta}}_{L^{\infty}(Q_T)}\right) 
\mathcal{C}_{12}\\
&\qquad+\abs{f^k}_{\lip(\R)} \norma{U^k}^2_{L^{\infty}(Q_T)} \norma{\boldsymbol{\mu}'}_{(L^1(\R))^{N^2}}
 \norma{\operatorname{Hess}\nu^k}_{(L^{\infty}(\R^N))^N}\mathcal{C}_{12}\\
  &\qquad+\abs{f^k}_{\lip(\R)} \norma{U^k}_{L^{\infty}(Q_T)}  \norma{\nabla\nu^k}_{(L^{\infty}(\R^N))^N}\mathcal{C}_{13}.
\end{align*}
   Now, since $\boldsymbol{U}_{\delta}$ is the entropy solution of \eqref{nlm}-\eqref{init}, we have
   \begin{align}\label{time}
   \gamma^k(U^k_{\delta},{\delta}) \leq \mathcal{C}_9^k{\delta},
   \end{align} owing to the time estimate (cf.~Cor.~\ref{regu}) with $\mathcal{C}_9^k$ independent of ${\delta}.$
   Finally, using \eqref{C}-\eqref{time} in Lemma~\ref{kuz1} and choosing $\epsilon=\epsilon_0=\sqrt{\delta},$ implies the theorem.
 \end{proof}
\begin{theorem}\label{rate}
 Let $\boldsymbol{U}$ be the entropy solution of the IVP \eqref{nls},\eqref{init} in the sense of \cite[Def 2.1]{ACG2015}, and let $\boldsymbol{U}^{\Delta}_{\delta}$ be the finite volume approximation to \eqref{nlm}-\eqref{init} obtained in \S\ref{num} with the space-time kernel $\boldsymbol{\Theta}_{\delta}$, where the mollifier $\Gamma$ satisfies \begin{equation}\label{Moment}
\displaystyle\int_{\mathbb{R}^{+}} x\,\Gamma^{j,k}(x)\,\d x\le C_{\boldsymbol{\Gamma}} < \infty.
 \end{equation} for $C_{\boldsymbol{\Gamma}}>0$. Then, as $\Delta x , \delta \rightarrow 0,$ $\boldsymbol{U}^{\Delta}_{\delta}$  converges to the entropy solution $\boldsymbol{U}$ of \eqref{nls},\eqref{init} in $L^1$ norm  and satisfies the following error estimate:
\begin{align*}
\norma{\boldsymbol{U}^{\Delta}_{\delta}(T)-\boldsymbol{U}(T)}_{(L^1(\R))^N} = \mathcal{O}(\sqrt{\delta}+\sqrt{\D x}).
\end{align*} 
\end{theorem}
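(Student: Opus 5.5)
The plan is to apply the relative entropy estimate of Lemma~\ref{lemma:kuz} with $\boldsymbol{V}=\boldsymbol{U}^{\Delta}_{\delta}$, the piecewise constant finite volume approximation. This is admissible because Lemma~\ref{stability} gives uniform $L^\infty$, $L^1$ and $\bv$ bounds for $\boldsymbol{U}^{\Delta}_{\delta}$, and these bounds depend only on $\norma{\Gamma^{j,k}_\delta}_{L^1}=1$, hence are independent of $\delta$ and $\Delta x$. So $\boldsymbol{U}^{\Delta}_{\delta}\in\mathcal{K}$ and $\mathcal{C}_{10}$ is uniform.

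The time modulus follows from Lemma~\ref{stability}\ref{lem:L1t}: $\gamma(U^{k,\Delta}_\delta,\epsilon_0)\le \mathcal{C}_9^k(\epsilon_0+\Delta t)$, again uniform in $\delta$. It therefore remains to bound $-\Lambda^k_{\epsilon,\epsilon_0}(U^{k,\Delta}_\delta,U^k)$. I will split it as
\[
-\Lambda^k_{\epsilon,\epsilon_0}(U^{k,\Delta}_\delta,U^k)=\bigl(-\Lambda^{k,\Theta_\delta}_{\epsilon,\epsilon_0}\bigr)+\mathcal{R}^k .
\]
Here $\Lambda^{k,\Theta_\delta}$ is the relative entropy functional built with the space–time convolution $\boldsymbol{\Theta}_\delta\circledast\boldsymbol{U}^{\Delta}_\delta$ in place of $\boldsymbol{\mu}\circledast\boldsymbol{U}^{\Delta}_\delta$. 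The remainder $\mathcal{R}^k$ collects the differences of the $\nu^k(\cdot)$ and $\partial_x\nu^k(\cdot)$ terms between the two convolutions.

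\textbf{The remainder $\mathcal{R}^k$.} This is treated exactly as in the proof of Theorem~\ref{con}, with $\boldsymbol{U}_\delta$ replaced by $\boldsymbol{U}^\Delta_\delta$. It is bounded by $\epsilon^{-1}I_1^k+I_1^k+I_2^k$, with the same structure. The estimates $I_1^k,I_2^k\le C\delta$ used only conservation of mass, the normalization of $\Gamma_\delta$, and the Lipschitz-in-time $L^1$ bound. The last one holds discretely up to $\Delta t$, so here $I_i^k\le C(\delta+\Delta t)$.

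Two extra points need care. First, the scheme uses cell averages $\Gamma^{j,k,s}_\delta$ and the discrete sum $c^{s,k,n}_{i+1/2}$ instead of the exact integral. I must compare $c$ with $(\boldsymbol{\Theta}_\delta\circledast\boldsymbol{U}^\Delta_\delta)$ evaluated at $(t,x)$. Because $\Gamma^{j,k,s}_\delta$ is an integral average, $\Delta t\sum_s\Gamma^{j,k,s}_\delta=1$ exactly, so no $\norma{\Gamma_\delta}_{L^\infty}\sim\delta^{-1}$ term appears. This is where the averaging remark is used. Second, the time-lag error involves the first moment of $\Gamma_\delta$, which is $\delta\int x\Gamma\le C_{\boldsymbol\Gamma}\delta$ by \eqref{Moment}; this replaces the crude $\delta$ bound from the support. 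Together these give $|\mathcal{R}^k|\le C(\delta+\Delta x)(1+\epsilon^{-1})$.

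\textbf{The term $-\Lambda^{k,\Theta_\delta}_{\epsilon,\epsilon_0}$.} This is the classical Kuznetsov-type consistency error of a monotone scheme. I will use the discrete entropy inequality Lemma~\ref{stability}\ref{lem:entropy}, multiply by $\Phi$ and sum by parts, following \cite{AHV2023}. The nonlocal coefficient enters only through $\nu^k(\boldsymbol{c})$. Its spatial differences are controlled by \eqref{eq:c2}–\eqref{eq:c3}, with constants depending on $\norma{\Gamma}_{L^1}$ only. Its time differences satisfy $|c^{n+1}-c^n|\lesssim\Delta t$ by the discrete time estimate together with $\Delta t\sum_s\Gamma_\delta^s=1$. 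This yields
\[
-\Lambda^{k,\Theta_\delta}_{\epsilon,\epsilon_0}\le C\Bigl(\frac{\Delta x}{\epsilon}+\frac{\Delta t}{\epsilon_0}+\Delta x\Bigr),
\]
uniformly in $\delta$. I expect this to be the main obstacle. The key point is checking that every bound on the coefficient avoids $\norma{\Gamma_\delta}_{L^\infty}$ and $\norma{\Gamma_\delta'}$. In particular, the time variation of $c$ must not be estimated through $\partial_t\Gamma_\delta$ but via the convolution structure: shift the index $m$ and use the $L^1$ time-Lipschitz bound of $\boldsymbol{U}^\Delta$.

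\textbf{Conclusion.} Collecting the terms gives
\[
\norma{\boldsymbol{U}^\Delta_\delta(T)-\boldsymbol{U}(T)}\le C\Bigl(\frac{\delta+\Delta x}{\epsilon}+\frac{\Delta x}{\epsilon_0}+\delta+\Delta x+\epsilon+\epsilon_0\Bigr).
\]
Choosing $\epsilon=\epsilon_0=\sqrt{\delta+\Delta x}$ gives $\mathcal{O}(\sqrt{\delta}+\sqrt{\Delta x})$.
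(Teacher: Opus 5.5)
Your overall route works, but it is organized differently from the paper's. The paper never introduces the exact space--time convolution $\boldsymbol{\Theta}_\delta\circledast\boldsymbol{U}^\Delta_\delta$. It writes $-\Lambda^k_{\epsilon,\epsilon_0}(U^{k,\Delta}_\delta,U^k)=\lambda^k_1+\lambda^k_2+\lambda^k_3$, applies the discrete entropy inequality, and compares $\nu^k(\boldsymbol{c}^{k,n,\delta}_{i+1/2})$ directly with $\nu^k((\boldsymbol{\mu}\circledast\boldsymbol{U}^\Delta_\delta)^k)$ in $\mathcal{E}^k_2,\mathcal{E}^k_{31},\mathcal{E}^k_{32}$, cutting time at $n_\delta$. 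On the layer $t^n\le\delta$ it uses a crude $O(\delta+\Delta t)$ bound. Afterwards it uses the normalization $\Delta t\sum_m\Gamma^{j,k,m}_\delta=1$, an $O(\Delta x)$ quadrature error, and an $O(\delta)$ time-lag error from the first moment (cf.\ \eqref{e_222}).

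By triangulating through $\boldsymbol{\Theta}_\delta\circledast\boldsymbol{U}^\Delta_\delta$, you instead separate two errors. The modelling error is handled by the proof of Theorem~\ref{con}, reused almost verbatim, with the first moment playing the role it plays in $\mathcal{E}^k_{222}$. The remaining term is a consistency estimate of the scheme for the memory equation that is uniform in $\delta$. This makes asymptotic compatibility explicit and isolates exactly which quantities must avoid $\norma{\Gamma^{j,k}_\delta}_{L^\infty(\R)}$. The price is an extra quadrature comparison between $c^{j,k,n}_{i+1/2}$ and the exact convolution of the piecewise constant $\boldsymbol{U}^\Delta_\delta$. You need it in $L^1$ (for the $\Phi_x$ term, with a factor $1/\epsilon$) and for its spatial increments (for the $\partial_x\nu^k$ term). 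You book this comparison under $\mathcal{R}^k$, although you defined $\mathcal{R}^k$ as the difference of the two continuous convolutions; it belongs to the consistency part.

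The one claim that is false as stated is the per-step bound $\abs{c^{n+1}-c^n}\lesssim\Delta t$ uniformly in $\delta$. The partial sums $\Delta t\sum_{s\le n}\Gamma^{j,k,s}_\delta=\int_0^{t^{n+1}}\Gamma^{j,k}_\delta\,d\tau$ equal $1$ only once $t^{n+1}\ge\delta$. Shifting the index as you propose gives
\[
c^{j,k,n+1}_{i+1/2}-c^{j,k,n}_{i+1/2}=\Delta x\,\Delta t\sum_{p\in\Z}\mu^{j,k}_{i+1/2-p}\Bigl[\sum_{m=0}^{n}\Gamma^{j,k,m}_{\delta}\bigl(U^{j,n+1-m}_{p}-U^{j,n-m}_{p}\bigr)+\Gamma^{j,k,n+1}_{\delta}\,U^{j,0}_{p}\Bigr].
\]
The first sum is indeed $O(\Delta t)$ by Lemma~\ref{stability}\ref{lem:L1t}. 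The last term, however, is $\int_{C^{n+1}}\Gamma^{j,k}_\delta\,d\tau$ times the $O(1)$ quantity $\Delta x\sum_p\mu^{j,k}_{i+1/2-p}U^{j,0}_p$. That weight is typically of order $\Delta t/\delta$ while $t^{n+1}<\delta$: the memory term builds up from zero over an initial layer of length $\delta$.

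The same boundary term appears in two other places. It sits in the spatial increments (the last term of \eqref{eq:raw-difference}), so a per-step bound of the form \eqref{cd2} is not uniform in $\delta$ on the layer either. It also appears in your quadrature comparison, as a weight $\int_t^{t^{n+1}}\Gamma^{j,k}_\delta\,d\tau$ on $U^{j,0}$.

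The estimate survives because, after summation by parts in time against $\Phi$, these increments enter only through $\Delta t\sum_n\sum_i\abs{\,\cdot\,}$, and $\sum_n\int_{C^n}\Gamma^{j,k}_\delta\,d\tau\le 1$. So replace the per-step claim by the time-summed bound
\[
\sum_{n=1}^{N_T}\sum_{i\in\Z}\Bigl|\bigl(c^{j,k,n}_{i+1/2}-c^{j,k,n}_{i-1/2}\bigr)-\bigl(c^{j,k,n-1}_{i+1/2}-c^{j,k,n-1}_{i-1/2}\bigr)\Bigr|\le\abs{\mu^{j,k}}_{\bv(\R)}\bigl(\mathcal{C}^j_9\,T+\norma{U^j_0}_{L^1(\R)}\bigr),
\]
together with its sup-norm analogue for $c^{j,k,n}_{i+1/2}-c^{j,k,n-1}_{i+1/2}$, which enters through the nonlinearity of $\nu^k$. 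The layer then costs only $O(\Delta t)$, and your bound on $-\Lambda^{k,\Theta_\delta}_{\epsilon,\epsilon_0}$ is indeed uniform in $\delta$. After adding the $O(\Delta x)$ initial-data error $\norma{\boldsymbol{U}^\Delta_0-\boldsymbol{U}_0}_{(L^1(\R))^N}$, your choice $\epsilon=\epsilon_0=\sqrt{\delta+\Delta x}$ yields the stated rate.
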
  
\begin{proof}
   {In view of the relative entropy estimate Lemma~\ref{lemma:kuz}, it is enough to estimate the relative entropy functional $-\Lambda^{k}_{\epsilon,\epsilon_0}(U^{k,\Delta}_{\delta},U^{k})$ and modulus of time continuity $\gamma(U_{\delta}^{k,\Delta},\epsilon_0)$. }
 We first prove that, for a fixed ${\delta}, \Delta x>0,$ and for any $\epsilon,\epsilon_0>0,$ the relative entropy functional satisfies the following estimate:
    \begin{align*}
-\Lambda^k_{\epsilon,\epsilon_0}(U_{\delta}^{k,\D},U^k) \leq \mathcal{C}_{14} \left({\delta}/{\epsilon} + {\delta} \right).
\end{align*}
Let $\sum\limits_{i,n}$ denote the double summation $\sum\limits_{i\in \Z}\sum\limits_{n=0}^{N_T-1}.$ Further, for $\alpha\in \R, \delta>0,i\in \Z, s,n,m\in {\mathcal{N}}_T, j,k \in\mathcal{N}, \alpha\in \R$ and $(t,x)\in Q_T,$ let 
$\Gamma^{j,k,s}_{\delta}=\displaystyle\frac{1}{\Delta t}\int_{C^s}\Gamma^{j,k}_{\delta}(\tau)d\tau,
    \eta_{i,\delta}^{k,n}(\alpha):=\abs{U_{i,\delta}^{k,n}-\alpha}$, $p_{i,\delta}^{k,n}(\alpha):=G(U_{i,\delta}^{k,n},\alpha) 
    $ and $c_{i+1/2}^{j,k,n,\delta}  :=\Delta x \Delta t\sum\limits_{m=0}^n\sum\limits_{p\in\Z} \Gamma^{j,k,n-m}_{\delta}\mu^{j,k}_{i+1/2-p} U^{j,m}_{p,\delta}$.
Using the fundamental theorem of calculus followed by summation by parts, $-\Lambda^k_{\epsilon,\epsilon_0}(U^{k,\D}_{\delta},U^k)$ can be written as:
\begin{align*}
-\Lambda^k_{\epsilon,\epsilon_0}(U_{\delta}^{k,\D},U^k) &=\lambda_1^k+\lambda_2^k+ \lambda_3^k,\end{align*}
\begin{align*}
\text{where }\lambda_1^k&:=\int_{Q_T} \sum\limits_{i,n} \left(\eta_{i,\delta}^{k,n+1}(U^k(s,y))-\eta_{i,\delta}^{k,n}(U^k(s,y))\right)\int\limits_{C_i}\Phi(s,y,t^{n+1},x)  \d x\d s  \d y, \nonumber\\
 \lambda_2^k&:=-\int_{Q_T} \sum\limits_{i,n} \int_{C_i^n}p_{i,\delta}^{k,n}(U^k(s,y))\nu^k((\boldsymbol{\mu}\circledast\boldsymbol{U}^{\Delta}_{\delta}(t))^k)(x)\Phi_x(s,y,t,x) \d t  \d x \d s \d y, \nonumber\\
 \lambda_3^k&:=\ \int_{Q_T}\sum\limits_{i,n}  \int_{C_i^n}\sgn(U_{i,\delta}^{k,n}-U^k(s,y))f^k(U^k(s,y))\mathcal{U}_x^{k,\Delta}(t,x)\Phi(s,y,t,x) \d t  \d x \d s \d y. \nonumber
\end{align*}
By applying the discrete entropy inequality \ref{lem:entropy} we get:
\begin{align*}
\lambda_1^k&\le
-\lambda \int_{Q_T} \sum\limits_{i,n}  \big(
\mathcal{G}^{k,n}_{i+1/2}(U_i^{k,n} ,U_{i+1}^{k,n},U^k(s,y) )-\mathcal{G}^{k,n}_{i-1/2}(U_{i-1}^{k,n} ,U_i^{k,n},U^k(s,y) )\big)\\&\qquad \qquad  \times \int_{C_i}\Phi(s,y,t^{n+1},x)  \d x\d s  \d y\\
 &\quad -\lambda \int_{Q_T} \sum\limits_{i,n}\sgn(U_i^{k,n+1}-U^k(s,y))f(U^k(s,y) )\left(\nu^k\left(\boldsymbol{c}_{i+1/2}^{k,n,\delta}\right) -\nu^k\left(\boldsymbol{c}_{i-1/2}^{k,n,\delta}\right)\right) \\&\qquad \qquad  \times 
\int_{C_i}\Phi(s,y,t^{n+1},x)  \d x\d s  \d y\\
&:= {\lambda}_{11}^k+{\lambda}_{12}^k.
\end{align*}
 Note that
\begin{align}
    \lambda_2^k&=-\int_{Q_T}\sum\limits_{i,n}  \int_{C_i^n}p_{i,\delta}^{k,n}(U^k(s,y))\nu^k\left(\boldsymbol{c}_{i+1/2}^{k,n,\delta}\right)\Phi_x(s,y,t,x)  \d t\d x\d s  \d y \nonumber\\ 
&\quad-\int_{Q_T}\sum\limits_{i,n}  \int_{C_i^n}p_{i,\delta}^{k,n}(U^k(s,y))\Phi_x(s,y,t,x) \left(\nu^k((\boldsymbol{\mu}\circledast\boldsymbol{U}^{\Delta}_{\delta}(t))^k)(x)-\nu^k\left(\boldsymbol{c}_{i+1/2}^{k,n,\delta}\right)\right)  \d t\d x\d s  \d y \nonumber\\
&:=\overline{\lambda}_2^{k}+\mathcal{E}_2^k, \nonumber\\&  \label{lam2}
 \end{align}
and
 \begin{align} \nonumber
    \lambda_3^k&= \int_{Q_T}\sum\limits_{i,n}  \sgn(U_i^{k,n}-U^k(s,y))f(U^k(s,y)) \left(\nu^k\left(\boldsymbol{c}_{i+1/2}^{k,n,\delta}\right)-\nu^k\left(\boldsymbol{c}_{i-1/2}^{k,n,\delta}\right)\right)\\ & \qquad \times \int\limits_{C^{n}} \Phi(s,y,t,x_{i+1/2})  \d t \d s \d y  \nonumber\\
    &\quad +\int_{Q_T}\sum\limits_{i,n}  \sgn(U_i^{k,n}-U^k(s,y))f(U^k(s,y))\nonumber \\
    & \quad \quad  \times 
    \int_{C_i^n}\mathcal{U}_x^{k,\Delta}(t,x)  \left(\Phi(s,y,t,x)-\Phi(s,y,t,x_{i+1/2})\right) \d x \d t  \d y \d s \nonumber\\
    &\quad + \int_{Q_T}\sum\limits_{i,n}  \sgn(U_i^{k,n}-U^k(s,y))f(U^k(s,y))\nonumber \\ 
    & \quad \quad \times 
\int_{C_i^n}\left(\mathcal{U}_x^{k,\Delta}(t,x)  -\frac1{\Delta x}\left(\nu^k\left(\boldsymbol{c}_{i+1/2}^{k,n,\delta}\right)-\nu^k\left(\boldsymbol{c}_{i-1/2}^{k,n,\delta}\right)\right)\right)\Phi(s,y,t,x_{i+1/2}) \d x \d t \d y \d s\nonumber\\ \nonumber
    &\quad :=\overline{\lambda}_3^{k}+\mathcal{E}_{31}^k+\mathcal{E}_{32}^k.\\& \label{lam3} 
\end{align}
To complete the proof of the theorem, it remains to prove the following claims:
\begin{enumerate}[\textbf{Claim \arabic*}]
        \item \label{Cl1}: $\lambda_{11}^{k}+\overline{\lambda}_2^{k}=\mathcal{O}\left(\displaystyle\frac{\Delta x}{\epsilon}+\displaystyle\frac{\Delta t}{\epsilon_0} \right)$
    \item \label{Cl2}:$\lambda_{12}^{k}+\overline{\lambda}_3^{k}=\mathcal{O}\left(\displaystyle\frac{\Delta x}{\epsilon}+\displaystyle\frac{\Delta t}{\epsilon_0} +\D t + \delta\right)$
     \item \label{Cl3}: $\mathcal{E}_2^k=\mathcal{O}\left(\displaystyle\frac{\Delta x}{\epsilon}+\displaystyle\displaystyle\frac{\delta}{\epsilon}\right)$
    \item \label{Cl4}: $\mathcal{E}_{31}^k+ \mathcal{E}_{32}^k=\mathcal{O}\left(\displaystyle\frac{\Delta x}{\epsilon}+\displaystyle\displaystyle\frac{\delta}{\epsilon}\right)$
\end{enumerate}
 Note that the aim is to get the estimates, in terms of both $\Delta x, \Delta t$ and $\delta$. Also, since \ref{Cl1} depends only on $\Delta x$ and $\Delta t$, and not on $\delta,$ it can be essentially done on the lines of \cite[Lemma 3.3]{AHV2023}, since \begin{align}\label{cdiff}
   \sum\limits_i| c^{j,k,n}_{i+1/2} - c^{j,k,n}_{i-1/2}|\le \norma{U_{0}^k}_{L^1(\R)}\abs{{\mu}^{j,k}}_{BV(\R)}\norma{\Gamma^{j,k}}_{L^1(\R^+)},
\end{align} by summing \eqref{eq:c2} over $i\in\Z$. Note that \ref{Cl2} can be essentially done on the lines of Claim 2 of \cite[Lemma 3.3]{AHV2023} provided that $\sum\limits_i \Big|
\bigl( c^{j,k,n}_{i+1/2} - c^{j,k,n}_{i-1/2} \bigr)
- \bigl( c^{j,k,n-1}_{i+1/2} - c^{j,k,n-1}_{i-1/2} \bigr)
\Big|$ does not depend on $\displaystyle\frac{1}{\delta}$. Infact, in what follows we prove that the following holds: \begin{align}\label{cd2}
&\sum\limits_i \Big|
\bigl( c^{j,k,n}_{i+1/2} - c^{j,k,n}_{i-1/2} \bigr)
- \bigl( c^{j,k,n-1}_{i+1/2} - c^{j,k,n-1}_{i-1/2} \bigr)
\Big| \le 
\mathcal{C}_{14}^k\, (\Delta t+\delta),
\end{align} which is a non trivial and requires the additional assumption (cf.~\ref{Moment}) on first moment of $\Gamma^{j,k}$ . 
Note that
\[
\sum\limits_{m\in\mathcal{N}_T} m\Delta t\,\Gamma_{\delta}^{j,k,m}
= \sum\limits_{m\in\mathcal{N}_T}m\int_{C^m}\Gamma^{j,k}_{\delta}(\tau)\,d\tau.
\]
For every $\tau\in C^m,$ we have $t_m\le \tau$.
Using the nonnegativity of $\Gamma^{j,k}_{\delta}$, this implies
\[
m\Delta t\,\Gamma^{j,k}_{\delta}(\tau)
\le \tau\,\Gamma^{j,k}_{\delta}(\tau).
\]
Therefore,
\begin{align*}
\sum\limits_{m\in\mathcal{N}_T} m\Delta t\,\Gamma_{\delta}^{j,k,m}
&\le \frac{1}{\Delta t}
\sum\limits_{m\in\mathcal{N}_T}\int_{C^m}
\tau\,\Gamma^{j,k}_{\delta}(\tau)\,d\tau= \frac{1}{\Delta t}\int_{0}^{\infty}
\tau\,\Gamma^{j,k}_{\delta}(\tau)\,d\tau=\frac{\delta}{\Delta t}\int_{0}^{\infty}\theta\,\Gamma^{j,k}(\theta)\,d\theta.
\end{align*}
Using \eqref{Moment}, we have,
\begin{align}\label{mo}
    \sum\limits_{m\ge 1} m\Delta t\,\Gamma_{\delta}^{j,k,m}
 \le \delta \frac{C_{\boldsymbol{\Gamma}}}{\Delta t},
\sum\limits_{m\ge 1} \Delta t\,\Gamma_{\delta}^{j,k,m}
 \le \delta \frac{C_{\boldsymbol{\Gamma}}}{\Delta t}.
\end{align}
Observe that,
\[
c^{j,k,n}_{i+\frac12}
=
\Delta x\,\Delta t
\sum\limits_{m=0}^n
\sum\limits_{p\in\mathbb Z}
\Gamma^{j,k,n-m}_\delta\,
\mu^{j,k}_{i+\frac12-p}\,
U^{k,m}_p =
\Delta x\,\Delta t
\sum\limits_{m=0}^{n}
\sum\limits_{p\in\mathbb Z}
\Gamma^{j,k,m}_\delta\,
\mu^{j,k}_{i+\frac12-p}\,
U^{k,n-m}_p,
\]
which implies that
\begin{align}
c^{j,k,n}_{i+\frac12}-c^{j,k,n}_{i-\frac12}
&=
\Delta x\,\Delta t
\sum\limits_{m=0}^n
\sum\limits_{p\in\mathbb Z}
\Gamma^{j,k,m}_\delta
\bigl(
\mu^{j,k}_{i+\frac12-p}-\mu^{j,k}_{i-\frac12-p}
\bigr)
U^{k,n-m}_p,
\label{eq:jump-n}
\end{align}
and 
\begin{align}
c^{j,k,n-1}_{i+\frac12}-c^{j,k,n-1}_{i-\frac12}
&=
\Delta x\,\Delta t
\sum\limits_{m=0}^{n-1}
\sum\limits_{p\in\mathbb Z}
\Gamma^{j,k,m}_\delta
\bigl(
\mu^{j,k}_{i+\frac12-p}-\mu^{j,k}_{i-\frac12-p}
\bigr)
U^{k,n-1-m}_p .
\label{eq:jump-n-1}
\end{align}
Subtracting \eqref{eq:jump-n-1} from \eqref{eq:jump-n} yields
\begin{align}
&\bigl( c^{j,k,n}_{i+\frac12}-c^{j,k,n}_{i-\frac12} \bigr)
-
\bigl( c^{j,k,n-1}_{i+\frac12}-c^{j,k,n-1}_{i-\frac12} \bigr)
\\&=
\Delta x\,\Delta t
\Bigg[
\sum\limits_{m=0}^{n-1}\Gamma^{j,k,m}_\delta
\sum\limits_{p}
\bigl(
\mu^{j,k}_{i+\frac12-p}-\mu^{j,k}_{i-\frac12-p}
\bigr)(U^{k,n-m}_p-U^{k,n-1-m}_p)
 \nonumber\\
&\quad 
+
\sum\limits_{p}
\Gamma^{j,k,n}_\delta
\bigl(
\mu^{j,k}_{i+\frac12-p}-\mu^{j,k}_{i-\frac12-p}
\bigr)
U^{k,0}_p
\Bigg].
\label{eq:raw-difference}
\end{align}
Then, using \ref{lem:L1t}, we have \eqref{cd2}, 
with $\mathcal{C}_{14}^k:=
\abs{\mu^{j,k}}_{\bv(\R)}\mathcal{C}_9^k
+
\,\displaystyle\frac{C_{\boldsymbol{\Gamma}}}{\lambda}
\abs{\mu^{j,k}}_{\bv(\R)}
\norma{{\boldsymbol{U}}^{0}}_{(L^1(\R))^N},$ which completes the proof of \ref{Cl2}. To prove \ref{Cl3},
consider the term,
\begin{align*} & \nonumber \sum\limits_{i,n} \int_{C_i^n} \left|\nu^k((\boldsymbol{\mu}\circledast\boldsymbol{U}^{\Delta}_{\delta}(t))^k)(x)-\nu^k\left(\boldsymbol{c}_{i+1/2}^{k,n,\delta}\right)\right|\d t\d x\\
&=\sum\limits_{i}\sum\limits_{n=0}^{n=n_{\delta} } \int_{C_i^n}\left|\nu^k((\boldsymbol{\mu}\circledast\boldsymbol{U}^{\Delta}_{\delta}(t))^k)(x)-\nu^k\left(\boldsymbol{c}_{i+1/2}^{k,n,\delta}\right)\right|\d t\d x\\
&\qquad+\sum\limits_{i}\sum\limits_{n=n_{\delta}+1}^{n=N_T} \int_{C_i^n}\left|\nu^k((\boldsymbol{\mu}\circledast\boldsymbol{U}^{\Delta}_{\delta}(t))^k)(x)-\nu^k\left(\boldsymbol{c}_{i+1/2}^{k,n,\delta}\right)\right|\d t\d x\\
&=\mathcal{E}_{21}^k+\mathcal{E}_{22}^k,
\end{align*}
 {where $n_{\delta}\in \mathcal{N}_T$ is taken to be the smallest integer such that $n_{\delta}\Delta t\geq\delta.$}
Note that,
\begin{align}
 \mathcal{E}_{21}^k
&\le\abs{\nu^k}_{\lip(\R)}\sum\limits_{i}\sum\limits_{n=0}^{n=n_{\delta} }\sum\limits_{p\in\Z}\sum\limits_{j\in\mathcal{N}}\int_{C_i^n}
 \bigg| \int_{C_p}\mu^{j,k}(x-\xi)  U^{j,n}_{p,\delta}\d \xi -\Delta x \D t \sum\limits_{m=0}^n \Theta^{j,k,n-m}_{i+1/2-p,\delta} U^{j,m}_{p,\delta}\bigg| \d t\d x  \nonumber \\
&\le \abs{\nu^k}_{\lip(\R)}\sum\limits_{i}\sum\limits_{n=0}^{n=n_{\delta} }\sum\limits_{p\in\Z}\sum\limits_{j\in\mathcal{N}}\int_{C_i^n}
 \int_{C_p}\mu^{j,k}(x-\xi)  U^{j,n}_{p,\delta}\d \xi \d t\d x \nonumber \\
&\qquad+\Delta x \D t \abs{\nu^k}_{\lip(\R)}\sum\limits_{i}\sum\limits_{n=0}^{n=n_{\delta} }\sum\limits_{p\in\Z}\sum\limits_{j\in\mathcal{N}}\sum\limits_{m=0}^n\int_{C_i^n} \Theta^{j,k,n-m}_{i+1/2-p,\delta} U^{j,m}_{p,\delta} \d t\d x  \nonumber
\\
&\le 2\abs{\nu^k}_{\lip(\R)}(\delta+\Delta t)\norma{\boldsymbol{U_0}}_{(L^1(\R))^{N}}\norma{\boldsymbol{\mu}}_{(L^1(\R))^{N^2}} +\abs{\nu^k}_{\lip(\R)}(\delta+\Delta t)\norma{\boldsymbol{U_0}}_{(L^1(\R))^{N}}\norma{\boldsymbol{\mu}}_{(L^1(\R))^{N^2}}\nonumber\\
&:=\mathcal{C}_{21}^k(\delta+\Delta t).\nonumber\\&\label{S1}\end{align}
To estimate $\mathcal{E}_{22}^k,$ the idea is to invoke the fact that, for any $t\ge \delta,$ $\displaystyle\int_0^t\Gamma^{j,k}_{\delta}(\tau)d\tau=1,$ which implies $\Delta t\displaystyle\sum\limits_{m=0}^n\Gamma^{j,k,m}_{\delta}=1$ for any $n\ge n_{\delta}.$
Thus,  \begin{align*} \allowdisplaybreaks
\mathcal{E}_{22}^k&= \sum\limits_{i}\sum\limits_{n=n_{\delta}+1}^{n=N_T} \sum\limits_{j\in\mathcal{N}}\int_{C_i^n}
 \bigg|\nu^k \bigg(\sum\limits_{p\in\Z} \int_{C_p}\mu^{j,k}(x-\xi)  U^{j,n}_{p,\delta}\d \xi \bigg) -\nu^k \bigg(\Delta x \D t \sum\limits_{m=0}^n\sum\limits_{p\in\Z} \Theta^{j,k,n-m}_{i+1/2-p,\delta} U^{j,m}_{p,\delta}\bigg)\bigg| \d t\d x \\
&\le \abs{\nu^k}_{\lip(\R)}\sum\limits_{i}\sum\limits_{n=n_{\delta}+1}^{n=N_T} \sum\limits_{j\in\mathcal{N}}\int_{C_i^n}\bigg|\sum\limits_{p\in\Z}\int_{C_p}\mu^{j,k}(x-\xi)  U^{j,n}_{p,\delta}\d \xi -\Delta x \D t \sum\limits_{m=0}^n \Theta^{j,k,n-m}_{i+1/2-p,\delta} U^{j,m}_{p,\delta}\bigg|  \\&
 \le \abs{\nu^k}_{\lip(\R)}
\sum\limits_{i}\sum\limits_{n=n_{\delta}+1}^{n=N_T} \sum\limits_{j\in\mathcal{N}}\\&
\qquad\times\int_{C_i^n}\bigg|\Delta t\sum\limits_{m=0}^n\Gamma^{j,k,m}_{\delta}\sum\limits_{p\in\Z}\int_{C_p}\mu^{j,k}(x-\xi) U^{j,n}_{p,\delta}\d \xi -\Delta x \D t \sum\limits_{m=0}^n \mu^{j,k}_{i+1/2-p}\Gamma^{j,k,m}_{\delta} U^{j,n-m}_{p,\delta}\bigg| \d t\d x \\&
\le \abs{\nu^k}_{\lip(\R)}\Delta t
\sum\limits_{i}\sum\limits_{n=n_{\delta}+1}^{n=N_T} \sum\limits_{m=0}^n \\& \qquad \times \Gamma^{j,k,m}_{\delta}\sum\limits_{j\in\mathcal{N}}\sum\limits_{p\in\Z}\int_{C_i^n}\bigg|\int_{C_p}\mu^{j,k}(x-\xi)   U^{j,n}_{p,\delta}\d \xi -\Delta x  \mu^{j,k}_{i+1/2-p} U^{j,n-m}_{p,\delta}\bigg| \d t\d x, 
\end{align*}
which implies,
\begin{align*}
\mathcal{E}_{22}^k&\le \abs{\nu^k}_{\lip(\R)}\Delta t\sum\limits_{i}\sum\limits_{n=n_{\delta}+1}^{n=N_T} \sum\limits_{m=0}^n\Gamma^{j,k,m}_{\delta}\sum\limits_{j\in\mathcal{N}}\sum\limits_{p\in\Z}\int_{C_i^n}
\bigg|\int_{C_p}\mu^{j,k}(x-\xi)   U^{j,n}_{p,\delta}\d \xi-\Delta x \mu^{j,k}_{i+1/2-p} U^{j,n}_{p,\delta}\bigg|\\
&\qquad+\abs{\nu^k}_{\lip(\R)}\bigg|\Delta x \mu^{j,k}_{i+1/2-p} U^{j,n}_{p,\delta}-\Delta x \mu^{j,k}_{i+1/2-p} U^{j,n-m}_{p,\delta}\bigg|\\
&\le \abs{\nu^k}_{\lip(\R)}\Delta t\sum\limits_{i}\sum\limits_{n=n_{\delta}+1}^{n=N_T} \sum\limits_{m=0}^n\Gamma^{j,k,m}_{\delta}\sum\limits_{j\in\mathcal{N}}\sum\limits_{p\in\Z}\int_{C_i^n}
\bigg|\int_{C_p}\mu^{j,k}(x-\xi)   U^{j,n}_{p,\delta}\d \xi-\int_{C_p} \mu^{j,k}_{i+1/2-p} U^{j,n}_{p,\delta}\d \xi\bigg|\\\
&\qquad+\abs{\nu^k}_{\lip(\R)}\bigg|\Delta x \mu^{j,k}_{i+1/2-p} U^{j,n}_{p,\delta}-\Delta x \mu^{j,k}_{i+1/2-p} U^{j,n-m}_{p,\delta}\bigg| \\
&\le \abs{\nu^k}_{\lip(\R)}\Delta t\sum\limits_{i}\sum\limits_{n=n_{\delta}+1}^{n=N_T} \sum\limits_{m=0}^n\Gamma^{j,k,m}_{\delta}\sum\limits_{j\in\mathcal{N}}\sum\limits_{p\in\Z}\int_{C_i^n}\int_{C_p}\abs{\mu^{j,k}(x_{i+1/2-p})-\mu^{j,k}(x-\xi)}U^{j,n}_{p,\delta}\d \xi \d\tau\d t\d x\\
&\qquad+\abs{\nu^k}_{\lip(\R)}\Delta t\sum\limits_{i}\sum\limits_{n=n_{\delta}+1}^{n=N_T} \sum\limits_{m=0}^n\Gamma^{j,k,m}_{\delta}\sum\limits_{j\in\mathcal{N}}\Delta x\sum\limits_{p\in\Z}\abs{U^{j,n}_{p,\delta}-U^{j,n-m}_{p,\delta}}\int_{C_i^n}\mu^{j,k}(x_{i+1/2-p}) \d t\d x
\\&=  \abs{\nu^k}_{\lip(\R)}\mathcal{E}_{221}^k+\abs{\nu^k}_{\lip(\R)}\mathcal{E}_{222}^k.
\end{align*}
Now, a direct estimate yields 
\begin{align*}
    \mathcal{E}_{221}^k\le N \Delta x\abs{\mu^{j,k}}_{BV(\R)}  \norma{\boldsymbol{U}_0}_{L^1(\R^N)}T:=\mathcal{C}_{221}^k\Delta x.
    \end{align*}
It should be noted that a direct estimate on $\mathcal{E}_{222}^k$ will yield a term $1/\delta$, which does not serve the purpose. Instead, 
in what follows, we invoke the moment estimates \eqref{Moment} to estimate $\mathcal{E}_{222}^k$. Firstly, we note that
using \eqref{mo} and \ref{lem:L1t} in $\mathcal{E}_{222}^k$, we have
\begin{align}
\begin{split} \label{e_222}
\mathcal{E}_{222}^k 
    &=\Delta t\Delta t\sum\limits_{n=n_{\delta}+1}^{n=N_T} \sum\limits_{m=0}^n\Gamma^{j,k,m}_{\delta}\sum\limits_{j\in\mathcal{N}}\Delta x\sum\limits_{p\in\Z}\abs{U^{j,n}_{p,\delta}-U^{j,n-m}_{p,\delta}}\norma{\mu^{j,k}}_{L^1(\R)}\\
    &=\Delta t^3 \norma{\mu^{j,k}}_{L^1(\R)} \sum\limits_{n=n_{\delta}+1}^{n=N_T} \sum\limits_{m=0}^n\Gamma^{j,k,m}_{\delta}\sum\limits_{j\in\mathcal{N}} m \mathcal{C}_9^k
        =\Delta t^3  N  \mathcal{C}_9^k\norma{\mu^{j,k}}_{L^1(\R)} \sum\limits_{n=n_{\delta}+1}^{n=N_T} \sum\limits_{m=0}^n m \Gamma^{j,k,m}_{\delta}
\\
    &=  \Delta t  N  \mathcal{C}_9^k\norma{\mu^{j,k}}_{L^1(\R)} \sum\limits_{n=n_{\delta}+1}^{n=N_T} C_{\boldsymbol{\Gamma}} \delta \nonumber =N\mathcal{C}_9^k\norma{\mu^{j,k}}_{L^1(\R)} TC_{\boldsymbol{\Gamma}}  \delta:=\mathcal{C}_{222}^k\delta
    \end{split}
\end{align}
Now, combining the above estimates, we get: 
\begin{align*}
&fds\nonumber\sum\limits_{i,n} \int_{C_i^n} \left|\nu^k((\boldsymbol{\mu}\circledast\boldsymbol{U}^{\Delta}_{\delta}(t))^k)(x)-\nu^k\left(\boldsymbol{c}_{i+1/2}^{k,n,\delta}\right)\right|\d t\d x \\ &\le \mathcal{C}_{21}^k(\delta+\Delta x)+\abs{\nu^k}_{\lip(\R)}\mathcal{C}_{221}^k\Delta x+\abs{\nu^k}_{\lip(\R)}\mathcal{C}_{222}^k\delta.
\end{align*}
Finally, using the following bounds for every $(t,x) \in Q_T$, \begin{align*}\int_{Q_T}\abs{p_{i,\delta}^{k,n}(U^k(s,y))} \d y \d s &\le \abs{f^k}_{\lip(\R)}
\norma{U^{k}_0}_{L^1(\R)},
\int_{Q_T}\abs{\Phi_x(s,y,t,x)} \d y \d s \le \displaystyle\frac{1}{\epsilon}, 
\end{align*}
we obtain that
\begin{align*}
\nonumber\abs{\mathcal{E}_2^k}&\le \abs{f^k}_{\lip(\R)}
\norma{U^{k}_0}_{L^1(\R)}\left(\mathcal{C}_{21}^k\displaystyle\frac{\delta}{\epsilon}+\lambda\mathcal{C}_{21}^k\displaystyle\frac{\Delta x}{\epsilon}+\abs{\nu^k}_{\lip(\R)}\mathcal{C}_{221}^k\frac{\Delta x}{\epsilon}+\abs{\nu^k}_{\lip(\R)}\mathcal{C}_{222}^k\displaystyle\frac{\delta}{\epsilon}\right),
\end{align*}  
which completes the proof of \ref{Cl3}. 
Further, note that
$\mathcal{E}^k_{31}$ and  $\mathcal{E}^k_{32}$ do not depend on $\norma{\Gamma_{\delta}^{j,k}}_{L^{\infty}(\R)}$ and only depends on $\norma{\Gamma_{\delta}^{j,k}}_{L^{1}(\R)},$ and hence, the proof of \ref{Cl4} can be handled on the same lines of \ref{Cl3}, (see also $\mathcal{E}_{31}$ and $\mathcal{E}_{32}$ in \cite[Lemma 3.3]{AHV2023}). To complete the proof, firstly we set 
$\varepsilon =\max(\sqrt{\delta},\sqrt{\Delta x})$ and $  \varepsilon_0 = \sqrt{\Delta t}.$ Now, note that  \ref{lem:L1t} implies $
\gamma(U^{k,\Delta}_{\delta}, \sqrt{\Delta t}) = \mathcal{O}(\sqrt{\Delta t})$. 
and \eqref{init} implies 
$
\|\boldsymbol{U}_0^\Delta - \boldsymbol{U}_0\|_{(L^1(\mathbb{R}))^{N}} = \mathcal{O}(\Delta x).$ Finally, the desired estimate follows from combining the results from \ref{Cl1}-\ref{Cl4} with Lemma \ref{kuz1} and 
the choice of $\epsilon$ and $\epsilon_0$.
\end{proof}
\section{Numerical Experiments}
\label{num1}
We present numerical experiments to illustrate the theory developed in \S\ref{NLL}, highlighting the memory-to-memoryless dynamics and the asymptotic compatibility of the finite volume scheme \eqref{scheme2}. Throughout this section, we choose $\beta = 0.3333$ and $\lambda = 0.1286$ so as to satisfy the CFL condition \eqref{CFL_LF}.
We consider a {nonlocal-in-space and nonlocal-in-time} generalization of the Keyfitz–Kranzer system introduced in \cite{ACG2015} in one spatial dimension, given by 
\begin{equation}
  \label{eq:kk}
  \left\{
    \begin{array}{l}
      \partial_t U^1 + \partial_x \left(U^1 \,  \nu (\Theta_{\delta}*U^1,\Theta_{\delta}*U^2)\right) = 0
      \\
      \partial_t U^2 + \partial_x \left( U^2 \,  \nu (\Theta_{\delta}*U^1,\Theta_{\delta}*U^2)\right) = 0
    \end{array}
  \right.,
\end{equation} where
$ \Gamma_{\delta}(t)=\displaystyle\frac{3}{\delta^3}(\delta-t)^2\mathbbm{1}_{(0,\delta)}(t), \mu(x)=\displaystyle Lx(\eta-x)^3\mathbbm{1}_{(0,\eta)}(x),\Theta_{\delta}(t,x)=\mu(x)\Gamma_{\delta}(t)$, and $\nu(a,b)=(1-a^2-b^2)^3$ where $L$ is such that $\displaystyle\int_{\R}\mu(x)\d x=1.$
 The system~\eqref{eq:kk} fits into the framework of this article  with $N=2, \boldsymbol{\Theta}_{\delta} =
    \left[\begin{array}{ccc}
        \Theta_{\delta} & \Theta_{\delta}
        \\
\Theta_{\delta}& \Theta_{\delta}
      \end{array}\right],\nu^k(x)=\nu(x),$ and  $f^k(u)=u,$ and that $\displaystyle\int_{\R^+}\Gamma(\tau)\d \tau=1,$ which is specifically required for \S\ref{NLL}.

We compute numerical solutions of \eqref{eq:kk} on the domain
 $[-5, \, 5]$ and the time
interval $[0, \, 0.5]$ with
\begin{align}
    \label{eq:ex1} U^1_0(x)=0.25\mathbbm{1}_{(-2,2)}(x), \quad &
    U^2_0(x)=\mathbbm{1}_{(-2,2)}(x).
    \end{align}\\
    
Figure \ref{fig:ex211} displays the numerical approximations of \eqref{eq:kk}-\eqref{eq:ex1} generated by the numerical  
scheme \eqref{scheme2}, 
with $\Delta x =0.00625, \eta=0.25$ and  $\delta=0.0125$. It can be seen that the numerical scheme is able to capture both shocks and rarefactions well. 
\begin{figure}[h!]
 \centering
\includegraphics[width=\textwidth,keepaspectratio]{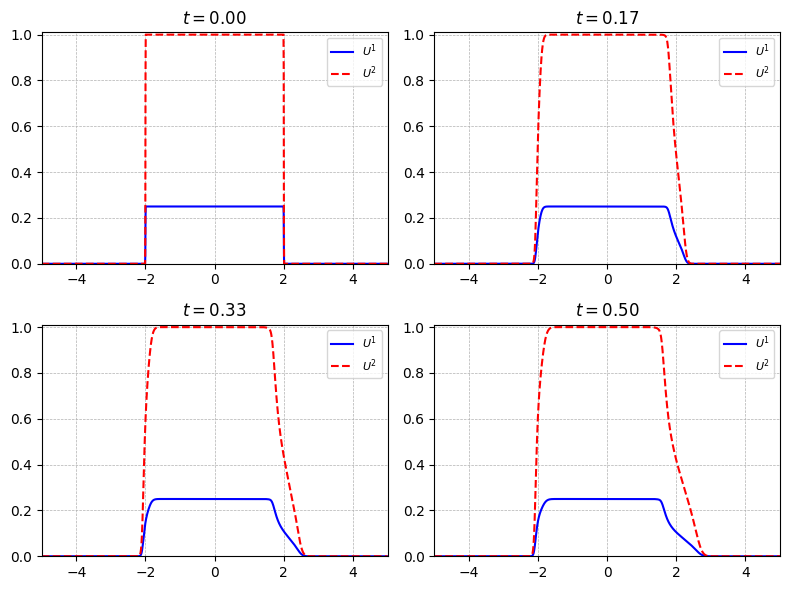}
\hfill
\caption{Solution to the nonlocal conservation law~\eqref{eq:kk}-\eqref{eq:ex1} on the domain $[-5,\,5]$ at times $t =
    0.00,\; 0.017,\;0.33, \: 0.5$, with mesh size $\Delta x=0.00625$. $U^1$({\full}),\,\,$U^2$({\color{red}\dashed}).}
  \label{fig:ex211}
\end{figure}

 \begin{figure}[h!]
  \centering \noindent\begin{minipage}{0.46\textwidth}
    \centering
    \begin{tabular}{|c|c|c|c|c|c|c|c|c|c|}\hline
     \multicolumn{1}{|c|}{ $ \delta$}&\multicolumn{1}{|c|}{$\frac{e_{\delta}(T)}{100}$}\vline & \multicolumn{1}{|c|}{$\alpha$}\vline\\
     \hline
     $4/5$&$31.11$&\tabularnewline
     \hline
     $2/5$&$19.01$&$0.71$\tabularnewline
     \hline
     $1/5$&$9.63$&$0.98$\tabularnewline
     \hline
     $1/10$&$4.83$&$0.99$\tabularnewline
     \hline
     $1/20$&$2.41$&$1.00$\tabularnewline
     \hline
     $1/40$&$1.17$&$1.04$\tabularnewline
     \hline
     $1/80$&$0.604$&$0.96$\tabularnewline
     \hline
 \end{tabular}
  \end{minipage}
\noindent\begin{minipage}{0.4\textwidth}
\includegraphics[width=\textwidth, trim = 40 25 20 5]{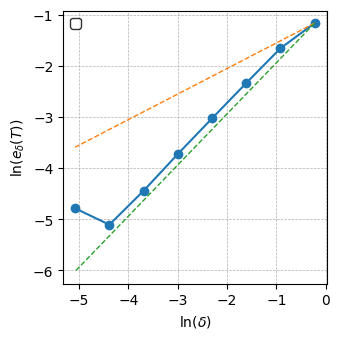}
  \end{minipage}  \caption{Convergence rate $\alpha$ for the memory-to-memoryless dynamics of solution $\boldsymbol{U}_{\delta}(T)$ with decreasing $\delta$:  Domain $[-5,\,5]$ at time $T=0.5$ for the problem~\eqref{eq:kk}-\eqref{eq:ex1}. Observed convergence rate({\color{blue}\oline}), theoretical convergence rate ({orange}), and reference line of slope $1$({\color{green}\dashed}).}\label{fig:ex21}
\end{figure}\begin{figure}[ht!]
 \centering
\includegraphics[width=\textwidth,keepaspectratio]{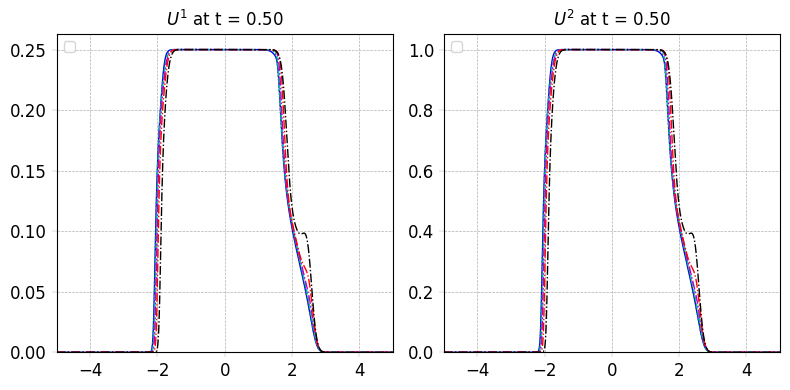}
\caption{Domain $[-5,\,5],   T=0.5,\Delta x =0.00625,\eta=0.25$: Solution to the ~\eqref{eq:kk}-\eqref{eq:ex1} with decreasing time convolution radii $\delta=$ $0.8$({\color{black}\chainn}),$0.4$({\color{magenta}\chainn}),$0.2$({\color{magenta}\chainn}), $0.1$({\color{cyan}\dashed}), $0.0125$({\color{green}\dotted}); Solution to the nonlocal-space only counterpart of ~\eqref{eq:kk}, with initial data \eqref{eq:ex1}({\color{blue}\full}).}
  \label{fig:ex22}
\end{figure}
 Figures \ref{fig:ex21}--\ref{fig:ex22} illustrate the memory-to-memoryless dynamics of \eqref{eq:kk}--\eqref{eq:ex1} as captured by the numerical scheme. To this end, we compute numerical approximations of \eqref{eq:kk}--\eqref{eq:ex1} using \eqref{scheme2} with $\Delta x = 0.00625$ and $\eta = 0.25$, while successively decreasing the temporal convolution radius $\delta$, starting from $\delta = 0.0125$ and halving it at each step. As shown in Figure \ref{fig:ex21}, the entropy solutions of the nonlocal space--nonlocal time system \eqref{eq:kk} converge to the entropy solution of the corresponding nonlocal-space-only system as the radius of the temporal convolution kernel $\Gamma$ tends to zero.

Let $\boldsymbol{U}_{\delta}(T,\cdot)$ denote the numerical solution at time $T$ corresponding to the temporal convolution radius $\delta$, computed using \eqref{scheme2} for the nonlocal space--nonlocal time system \eqref{eq:kk}--\eqref{eq:ex1} and let $\boldsymbol{U}(T,\cdot)$ of the nonlocal-space-only counterpart of \eqref{eq:kk}. In addition, we estimate the convergence rate of the memory-to-memoryless dynamics at time $T = 0.5$ 
 by measuring $
e_{\delta}(T) = \norma{\boldsymbol{U}_{\delta}(T,\cdot) - \boldsymbol{U}(T,\cdot)}_{(L^1(\mathbb{R}))^N}.$
The observed convergence rate is given by $
\alpha = \displaystyle\log_{2}\!\left(\frac{e_{\delta}(T)}{e_{\delta/2}(T)}\right),$
and is reported in Figure \ref{fig:ex22}. We observe that $\alpha > 0.5$, exceeding the theoretical rate established in Theorem~\ref{con}.

Figures \ref{fig:ex31}–\ref{fig:ex32} demonstrate the asymptotic compatibility of the scheme \eqref{scheme2} in the memory-to-memoryless limit. Numerical approximations of \eqref{eq:kk}–\eqref{eq:ex1} are computed with $\eta = 0.25$ using \eqref{scheme2}, \begin{figure}[h!]
  \centering \noindent\begin{minipage}{0.46\textwidth}
    \centering
    \begin{tabular}{|c|c|c|c|c|c|c|c|c|c|}\hline
     \multicolumn{1}{|c|}{ $ \frac{\Delta x}{0.0125}$}&\multicolumn{1}{|c|}{$e_{\Delta}(T)$}\vline & \multicolumn{1}{|c|}{$\alpha$}\vline\\
     \hline
      $1$&$0.63$&\tabularnewline
     \hline
     $1/2$&$0.38$&$0.72$\tabularnewline
     \hline
     $1/4$&$0.19$&$0.99$\tabularnewline
     \hline
     $1/8$&$0.096$&$1.00$\tabularnewline
    \hline
 \end{tabular}
  \end{minipage}
\noindent\begin{minipage}{0.4\textwidth}
\includegraphics[width=\textwidth, trim = 40 25 20 5]{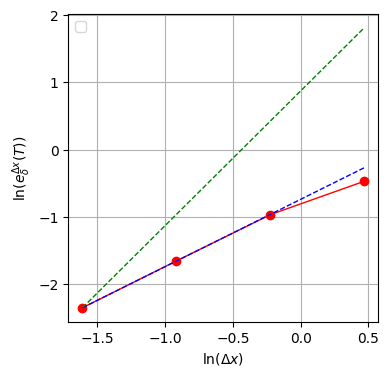}
  \end{minipage}  
  \vspace{1cm}\caption{Convergence rate $\alpha$ for the asymptotic compatibility of the scheme \eqref{scheme2} for memory-to-memoryless dynamics:  Domain $[-5,\,5]$ at time $T=0.5$ for the problem~\eqref{eq:kk}-\eqref{eq:ex1}. Observed convergence rate({\color{blue}\oline}), theoretical convergence rate ({orange}), and reference line of slope $1$({\color{green}\dashed}).}\label{fig:ex31}
\end{figure}\begin{figure}[ht!]
 \centering
\includegraphics[width=\textwidth,keepaspectratio]{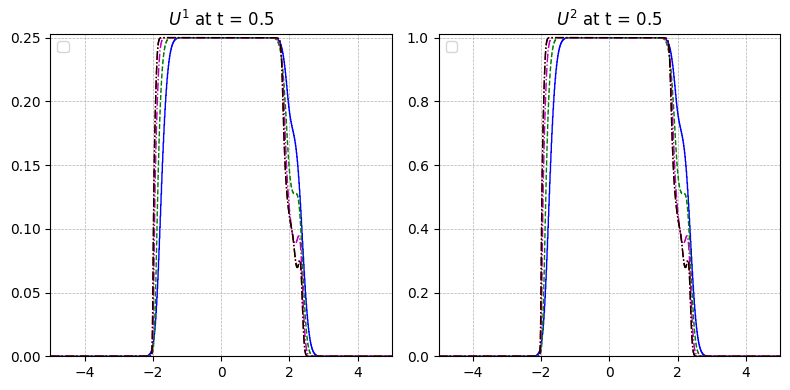}
\caption{Domain $[-5,\,5],   T=0.5,\Delta x =0.00625,\eta=0.25$, Solution to ~\eqref{eq:kk}-\eqref{eq:ex1} with decreasing $\Delta x$.
$\frac{\Delta x}{0.0125}=$ $1$({\color{black}\chainn}),$1/2$({\color{magenta}\chainn}),$1/4$({\color{magenta}\chainn}), $1/8$({\color{green}\dashed}); Solution to the nonlocal-space only counterpart of ~\eqref{eq:kk}, with initial data \eqref{eq:ex1}({\color{blue}\full}).}
  \label{fig:ex32}
\end{figure}while the spatial grid is successively refined. Starting from $\Delta x = 0.0125$, the mesh size is halved at each refinement step. Simultaneously, the temporal convolution radius $\delta$ is reduced in such a way that the ratio $\displaystyle \frac{\delta}{\Delta x} = 128$ remains fixed. As evidenced in Figure \ref{fig:ex31}, the entropy solutions of the nonlocal space–nonlocal time system \eqref{eq:kk} converge to the entropy solution of the corresponding nonlocal-space-only system in the limit $\delta \to 0$. Let $\boldsymbol{U}_{\delta}^{\Delta}(T,\cdot)$ denote the numerical solution at time $T$, corresponding to temporal convolution radius $\delta$ and spatial mesh size $\Delta x$, computed using \eqref{scheme2} for the nonlocal space--nonlocal time system \eqref{eq:kk}--\eqref{eq:ex1}. 
Let $\boldsymbol{U}^{\Delta_{\text{fine}}}(T,\cdot)$ denote the numerical solution at time $T$ of the corresponding nonlocal-space-only system, computed using \eqref{scheme2} on the finest spatial grid with mesh size $\Delta_{\text{fine}} = 0.0125/4$, which is taken as the reference solution. 
In addition, the convergence rate at time $T = 0.5$ is estimated by computing the $L^1$-error with respect to this reference solution. Specifically, we define $
e_{\Delta}(T) = \norma{\boldsymbol{U}_{\delta}^{\Delta}(T,\cdot) - \boldsymbol{U}^{\Delta_{\text{fine}}}(T,\cdot)}_{(L^1(\mathbb{R}))^N}.$
The observed convergence rate is then given by $
\alpha = \log_{2}\!\left(\frac{e_{\Delta}(T)}{e_{\Delta/2}(T)}\right)$. The computed rates are reported in Figure \ref{fig:ex32}. We observe that $\alpha > 0.5$, which exceeds the theoretical convergence rate established in Theorem~\ref{rate}.
\section{Conclusion and Future Directions}
In this work, using convergent finite volume approximations together with the continuous dependence estimates for scalar conservation laws with smooth coefficients, we prove well-posedness of the initial value problem \eqref{nlm}--\eqref{init}. As the temporal kernel concentrates, the entropy solutions of \eqref{nlm}--\eqref{init} converge to those of the purely spatially nonlocal system \eqref{nls}, \eqref{init}, with rate $\mathcal{O}(\sqrt{\delta})$, thereby justifying the memory-to-memoryless limit. For temporal kernels with a finite first moment, the finite volume schemes for \eqref{nlm}--\eqref{init} are shown to be compatible with this asymptotic behavior, with an asymptotic rate of $\mathcal{O}(\sqrt{\delta}+\sqrt{\Delta x})$. To the best of our knowledge, these are the first results addressing well-posedness and asymptotic analysis for such general systems with memory.

Stability estimates, error analysis of the proposed finite volume method, and extensions to less regular interaction kernels in the presence of source terms will be presented in forthcoming companion papers.  For the case of  $N=1$ and linear $f$, we also plan to investigate the asymptotic compatibility of the finite volume schemes \eqref{scheme2} for the nonlocal-to-local transition from \eqref{nlm}–\eqref{init} to \eqref{local}, \eqref{init}. Nevertheless, several challenging problems remain open:
\begin{itemize}
\item When nonlocality acts only in time (without spatial convolution) and the flux $f^k$ is nonlinear in \eqref{nlm}--\eqref{init}, the entropy framework and the spatial $\bv$ bounds do not directly carry over, since the spatial regularity of the nonlocal term is lost, and the system behaves like a local conservation law with discontinuous flux. Consequently, a general well-posedness theory for such PDEs, as well as the corresponding nonlocal-to-local asymptotic analysis, remain {\color{black}a} largely open problem at the time of writing, even for $N=1$.
{\color{black}However, for $N=1$, and $f$ linear, we believe that one could possibly prove the existence and uniqueness of the weak solutions (without additional entropy condition)} using fixed-point arguments.
\item With $N>1$, the complete passage from  \eqref{nlm}-\eqref{init} to fully local system \eqref{local}, \eqref{init} for general nonlinear fluxes is also largely open at this time, as the {\color{black}well-posedness theory for }local system, \eqref{local},\eqref{init} with $N>1$ is again underdeveloped for general $\bv$ data. {\color{black}Alternatively, one needs to curate a special class of nonlocal systems so that the corresponding local system is well-posed for $\bv$ initial data.}
\end{itemize}
\bmhead{Acknowledgements}This work was partially supported by AA's Seed Money Grant SM/08/2025-26 and the ARG Matrics Grant ANRF/ARGM/2025/001976/MTR from the Anusandhan National Research Foundation (ANRF), India, as well as GV's INSPIRE Faculty Fellowship (IFA24-MA215) from the Department of Science and Technology (DST), Government of India. The authors also acknowledge the hospitality of the Department of Mathematics, Penn State University, where part of this work was completed during AA's visit and GV's postdoctoral tenure.
\bibliographystyle{siam}
\bibliography{references}

@preamble{
   "\def\cprime{$'$} "
}

@article{BHL2023,
  title={A {H}illiges-{W}eidlich-type scheme for a one-dimensional scalar conservation law with nonlocal flux.},
  author={B{\"u}rger, Raimund and Contreras, Harold Deivi and Villada, Luis Miguel},
  journal={Netw. Heterog. Media},
  volume={18},
  number={2},
  pages={664--693},
  year={2023},
}

@article{AHV2024,
year = {2025},
month = {oct},
publisher = {IOP Publishing},
volume = {38},
number = {10},
pages = {105007},
author = {Aggarwal, Aekta and Holden, Helge and Vaidya, Ganesh},
title = {Error estimates for systems of nonlocal balance laws modelling dense multilane vehicular traffic},
journal = {Nonlinearity},
}

@article{BMV2025,
  title={Uniqueness Domains for ${L}^{\infty}$ Solutions of $2\times2$ Hyperbolic Conservation Laws},
  author={Bressan, Alberto and Marconi, Elio and Vaidya, Ganesh},
  journal={Arch. Ration. Mech. Anal.},
  volume={249},
  number={6},
  pages={1--37},
  year={2025},
  publisher={Springer}
}

@article{CG2019,
  title={Non-local multi-class traffic flow models},
  author={Chiarello, Felisia Angela and Goatin, Paola},
  journal={ Netw. Heterog. Media},
  volume={14},
  number={2},
  pages={371--387},
  year={2019}
}

@article{KLS2018,
  title={Analysis of a system of nonlocal balance laws with weighted work in progress},
  author={Keimer, Alexander and Leugering, G{\"u}nter and Sarkar, Tanmay},
  journal={J. Hyperbolic Differ. Eq.},
  volume={15},
  number={03},
  pages={375--406},
  year={2018},
  publisher={World Scientific}
}

@article{CC2007,
  title={Solutions for a nonlocal conservation law with fading memory},
  author={Chen, Gui-Qiang and Christoforou, Cleopatra},
  journal={Proc. Am. Math. Soc.},
  volume={135},
  number={12},
  pages={3905--3915},
  year={2007}
}

@inproceedings{c2008,
  title={Nonlocal conservation laws with memory},
  author={Christoforou, C},
  booktitle={Hyperbolic problems: theory, numerics, applications},
  pages={381--388},
  year={2008},
  organization={Springer}
}

@article{DAF2012,
  title={Development of singularities in the motion of materials with fading memory},
  author={Dafermos, C M },
  journal={Arch. Rational Mech. Anal.},
    volume={91},
  pages={193--205},
  year={1986},
  publisher={Springer}
}

@article{CHR2007,
  title={Systems of hyperbolic conservation laws with memory},
  author={Christoforou, C},
  journal={J. Hyper. Diff. Eqs.},
  volume={4},
  number={03},
  pages={435--478},
  year={2007},
  publisher={World Scientific}
}

@article{ACG2015,
  title={Nonlocal systems of conservation laws in several space dimensions},
  author={Aggarwal, Aekta and Colombo, Rinaldo M. and Goatin, Paola},
  journal={SIAM J. Numer. Anal.},
  volume={53},
  number={2},
  pages={963--983},
  year={2015},
  publisher={SIAM}
}

@article{AG2016,
  title={Crowd dynamics through non-local conservation laws},
  author={Aggarwal, Aekta and Goatin, Paola},
  journal={Bull. Braz. Math. Soc. (N.S.)},
  volume={47},
  number={1},
  pages={37--50},
  year={2016},
  publisher={Springer}
}

@article{AHV2023_1,
    author = {Aggarwal, Aekta and Holden, Helge and Vaidya, Ganesh},
    title = {Well-posedness and error estimates for coupled systems of nonlocal conservation laws},
    journal = {IMA J. Numer. Anal.},
    volume = {44},
    number = {6},
    pages = {3354-3392},
    year = {2024},
    month = {01},
    issn = {0272-4979},
    doi = {10.1093/imanum/drad101},
    url = {https://doi.org/10.1093/imanum/drad101},
    eprint = {https://academic.oup.com/imajna/article-pdf/44/6/3354/60925381/drad101.pdf},
}

@article{AHV2023,
  title={On the accuracy of the finite volume approximations to nonlocal conservation laws},
  author={Aggarwal, Aekta and Holden, Helge and Vaidya, Ganesh},
  journal={Numer. Math.},
  volume={156},
  number={1},
  pages={237--271},
  year={2024},
  publisher={Springer}
}

@article{AV2023,
  title={Convergence of the numerical approximations and well-posedness: Nonlocal conservation laws with rough flux},
  author={Aggarwal, Aekta and Vaidya, Ganesh},
  journal={Math. Comp.},
  volume={94},
  number={352},
  pages={585--610},
  year={2025}
}

@article{ANT2007,
  title={First-order continuous models of opinion formation},
  author={Aletti, Giacomo and Naldi, Giovanni and Toscani, Giuseppe},
  journal={SIAM J. Appl. Math.},
  volume={67},
  number={3},
  pages={837--853},
  year={2007},
  publisher={SIAM}
}

@article {AS2012,
  author ={Amadori, Debora  and Shen, Wen},
  title ={An integro-differential conservation law arising in
                  a model of granular flow},
  journal ={J. Hyperbolic Differ. Equ.},
  Fjournal =	 {J. Hyperbolic Differ. Equ.},
  volume =	 9,
  year =	 2012,
  NUMBER =	 1,
  PAGES =	 {105--131},
  ISSN =	 {0219-8916},
  MRCLASS =	 {35L65 (35L04 35R09)},
  MRNUMBER =	 2910979,
  MRREVIEWER =	 {Hermano Frid},
  DOI =		 {10.1142/S0219891612500038},
  URL =		 {http://dx.doi.org/10.1142/S0219891612500038},
}

@article{ACT2015,
  title={On the numerical integration of scalar nonlocal conservation laws},
  author={Amorim, Paulo and Colombo, Rinaldo M and Teixeira, Andreia},
  journal={ESAIM Math. Model. Numer. Anal.},
  volume={49},
  number={1},
  pages={19--37},
  year={2015},
  publisher={EDP Sciences}
}

@article{BBKT2011,
  title={On nonlocal conservation laws modelling sedimentation},
  author={Betancourt, Fernando and B{\"u}rger, Raimund and Karlsen, Kenneth H and Tory, Elmer M},
  journal={Nonlinearity},
  volume={24},
  number={3},
  pages={855},
  year={2011},
  publisher={IOP Publishing}
}

@article{BG2016,
  title={Well-posedness of a conservation law with non-local flux arising in traffic flow modeling},
  author={Blandin, Sebastien and Goatin, Paola},
  journal={Numer. Math.},
  volume={132},
  number={2},
  pages={217--241},
  year={2016},
  publisher={Springer}
}

@article{CG2023,
  title={{A non-local system modeling bi-directional traffic flows}},
  author = {Chiarello, Felisia Angela and Goatin, Paola},
  URL = {https://hal.science/hal-03722225},
  journal = {SEMA SIMAI Springer Ser.},
  publisher = {Springer International Publishing},
  volume={32},
  year={2023},
  HAL_ID = {hal-03722225},
  HAL_VERSION = {v1},
}

@article{FCV2023,
  title = {Existence of Entropy Weak Solutions for {1D}  Non-Local Traffic Models with Space Discontinuous Flux},
  author = {{Felisia Angela Chiarello} and Contreras, Harold Devi and Villada, Luis Miguel},
  year = {2023},
  journal = {J. Engrg. Math.}
}

@article {CGL2012,
  author ={Colombo, R. M. and Garavello, M. and L{\'e}cureux-Mercier, M.},
  title =	{A class of nonlocal models for pedestrian traffic},
  journal ={Math. Mod. Met. Appl. Sci.},
  volume = 22,
  year = 2012,
  NUMBER = 4,
  PAGES =	{1150023-34},
  ISSN = {0218-2025},
  MRCLASS = {90B20 (35Qxx)},
  MRNUMBER = 2902155,
  DOI = {10.1142/S0218202511500230},
  URL =		 {http://dx.doi.org/10.1142/S0218202511500230},
}

@article{CM2015,
  title={Nonlocal systems of balance laws in several space dimensions with applications to laser technology},
  author={Colombo, Rinaldo M and Marcellini, Francesca},
  journal={J. Differential Equations },
  volume={259},
  number={11},
  pages={6749--6773},
  year={2015},
  publisher={Elsevier}
}

@article{CK24,
  author = {F. A. Chiarello and A. Keimer},
  title = {On the singular limit problem in nonlocal balance laws: applications to nonlocal lane-changing traffic flow models},
  journal = {J. Math. Anal. Appl.},
  volume = {537},
  number = {2},
  year = {2024},
  pages = {128358}
}

@article{KP2019,
  title={Nonlocal conservation laws with time delay},
  author={Keimer, Alexander and Pflug, Lukas},
  journal={NoDEA Nonlinear Differential Equations Appl.},
  volume={26},
  number={6},
  pages={54},
  year={2019},
  publisher={Springer}
}

@article{BFK2022,
  title={Modeling multilane traffic with moving obstacles by nonlocal balance laws},
  author={Bayen, Alexandre and Friedrich, Jan and Keimer, Alexander and Pflug, Lukas and Veeravalli, Tanya},
  journal={SIAM J. Appl. Dyn. Syst.},
  volume={21},
  number={2},
  pages={1495--1538},
  year={2022},
  publisher={SIAM}
}

@article{FGKP2022,
author = {Friedrich, Jan and G\"{o}ttlich, Simone and Keimer, Alexander and Pflug, Lukas},
title = {Conservation Laws with Nonlocal Velocity: The Singular Limit Problem},
journal = {SIAM J. Appl. Math.},
volume = {84},
number = {2},
pages = {497-522},
year = {2024},
}

@article{FGR2021,
  title={Nonlocal approaches for multilane traffic models},
  author={Friedrich, Jan and G{\"o}ttlich, Simone and Rossi, Elena},
  journal={Commun. Math. Sci.},
  volume={19},
  number={8},
  pages={2291--2317},
  year={2021},
  publisher={International Press of Boston}
}

@article {GHS+2014,
   title={Modeling, simulation and validation of material flow on conveyor belts},
  author={G{\"o}ttlich, Simone and Hoher, Simon and Schindler, Patrick and Schleper, Veronika and Verl, Alexander},
  journal={Appl. Math. Model.},
  volume={38},
  number={13},
  pages={3295--3313},
  year={2014},
  publisher={Elsevier}
}

@book{HR2015,
  title={Front {T}racking for {H}yperbolic {C}onservation {L}aws},
  author={Holden, Helge and Risebro, Nils Henrik},
  year={2015},
edition="Second",
  publisher={Springer}
}

@article{DHSS2023,
  title={A space-time nonlocal traffic flow model: Relaxation representation and local limit},
  author={Du, Qiang and Huang, Kuang and Scott, James and Shen, Wen},
  journal={Discrete Contin. Dyn. Syst.},
  volume={43},
  number={9},
  year={2023},
  publisher={AIMS}
}

@article{KP2021,
  title={Discontinuous nonlocal conservation laws and related discontinuous ODEs--Existence, Uniqueness, Stability and Regularity},
  author={Keimer, Alexander and Pflug, Lukas},
  journal={Comptes Rendus. Math{\'e}matique},
  volume={361},
  number={G11},
  pages={1723--1760},
  year={2023}
}

@article{BS2021,
   title={Entropy Admissibility Of The Limit Solution For A Nonlocal Model Of Traffic Flow},
  author={Bressan, Alberto and Shen, Wen},
  journal={Commun. Math. Sci.},
  volume={19},
  number={5},
  pages={1447--1450},
  year={2021},
  publisher={International Press of Boston, Inc.}
}

@article{CCS2019,
  title={On the singular local limit for conservation laws with nonlocal fluxes},
  author={Colombo, Maria and Crippa, Gianluca and Spinolo, Laura V},
  journal={Arch. Ration. Mech. Anal.},
  volume={233},
  number={3},
  pages={1131--1167},
  year={2019},
  publisher={Springer}
}

@article{CGES2021,
  title={Local limit of nonlocal traffic models: convergence results and total variation blow-up},
  author={Colombo, Maria and Crippa, Gianluca and Marconi, Elio and Spinolo, Laura V},
 journal={Ann. Inst. H. Poincar{\'e} C Anal. Non Lin{\'e}aire},
  volume={38},
  number={5},
  pages={1653--1666},
  year={2021},
  organization={Elsevier}
}

@article{GNAL2021,
title = {Singular limits with vanishing viscosity for nonlocal conservation laws},
journal = {Nonlinear Anal.},
volume = {211},
pages = {112370},
year = {2021},
issn = {0362-546X},
doi = {https://doi.org/10.1016/j.na.2021.112370},
url = {https://www.sciencedirect.com/science/article/pii/S0362546X21000808},
author = {Giuseppe Maria Coclite and Nicola {De Nitti} and Alexander Keimer and Lukas Pflug}
}

@article{KR2003,
  title={On the uniqueness and stability of entropy solutions of nonlinear degenerate parabolic equations with rough coefficients},
  author={Karlsen, Kenneth Hvistendahlvistendahl and Risebro, Nils Henrik},
  journal={Discrete Contin. Dyn. Syst.},
  volume={9},
  number={5},
  pages={1081},
  year={2003},
  publisher={American Institute of Mathematical Sciences}
}

@article{CAL2020,
   author = {Coron, Jean-Michel and Alexander Keimer and Lukas Pflug},
   doi = {10.1137/20M1331652},
   issn = {10957154},
   issue = {6},
   journal = {SIAM J. Math. Anal.},
   title = {Nonlocal Transport Equations-Existence and Uniqueness of Solutions and Relation to the Corresponding Conservation Laws},
   volume = {52},
number = {6},
pages = {5500-5532},
   year = {2020},
}

@book{B2000,
  title={Hyperbolic systems of conservation laws: the one-dimensional Cauchy problem},
  author={Bressan, Alberto},
  volume={20},
  year={2000},
  publisher={OUP Oxford}
}

@article{liu2020,
  title={Long-time behavior of a class of viscoelastic plate equations},
  author={Liu, Yang},
  journal={Electron. Res. Arch.},
  volume={28},
  number={1},
  pages={311},
  year={2020},
  publisher={American Institute of Mathematical Sciences}
}

@article{Dafermos1970,
  author    = {Dafermos, C M },
  title     = {Asymptotic stability in viscoelasticity},
  journal   = {Arch. Ration. Mech. Anal.},
  volume    = {37},
  year      = {1970},
  pages     = {297--308}
}

@book{Podlubny1999,
  author    = {Igor Podlubny},
  title     = {Fractional Differential Equations},
  publisher = {Academic Press},
  address   = {San Diego},
  year      = {1999}
}

@article{Clarkson1999,
  author = {Clarkson, C. R. and Bustin, R. M.},
  title = {The effect of pore structure and gas pressure upon the transport properties of coal: a laboratory and modeling study. 1. Isotherms and pore volume distributions},
  journal = {Fuel},
  volume = {78},
  pages = {1333--1344},
  year = {1999}
}

@article{Shi2003,
  author = {Shi, J. Q. and Durucan, S.},
  title = {A bidisperse pore diffusion model for methane displacement desorption in coal by CO$_2$ injection},
  journal = {Fuel},
  volume = {82},
  pages = {1219--1229},
  year = {2003},
  doi = {10.1016/S0016-2361(03)00049-7}
}

@article{Haggerty1995,
  author = {Haggerty, R. and Gorelick, S. M.},
  title = {Multiple-rate mass transfer for modeling diffusion and surface reactions in media with pore-scale heterogeneity},
  journal = {Water Resour. Res.},
  volume = {31},
  pages = {2383--2400},
  year = {1995}
}

@article{Gooseff2003,
  author = {Gooseff, M. N. and Wondzell, S. M. and Haggerty, R. and Anderson, J.},
  title = {Comparing transient storage modeling and residence time distribution (RTD) analysis in geomorphically varied reaches in the Lookout Creek basin, Oregon, USA},
  journal = {Adv. Water Resour.},
  volume = {26},
  pages = {925--937},
  year = {2003}
}

@article{Haggerty2002,
  author = {Haggerty, R. and Wondzell, S. M. and Johnson, M. A.},
  title = {Power-law residence time distribution in the hyporheic zone of a 2nd-order mountain stream},
  journal = {Geophys. Res. Lett.},
  volume = {29},
  number = {13},
  pages = {1811},
  year = {2002},
  doi = {10.1029/2002GL014743}
}

@article{P2014,
  title={Numerical scheme for a scalar conservation law with memory},
  author={ Peszy{\'n}ska, Ma{\l}gorzata},
  journal={Numer. Methods Partial Differential Equations},
  volume={30},
  number={1},
  pages={239--264},
  year={2014},
  publisher={Wiley Online Library}
}

@incollection{N2023,
  title={A nonlinear conservation law with memory},
  author={Nohel, JA},
  booktitle={Volterra and Functional Differential Equations},
  pages={91--123},
  year={2023},
  publisher={CRC Press}
}

@incollection{D1987,
  title={Solutions with shocks for conservation laws with memory},
  author={Dafermos, C M },
  booktitle={Amorphous Polymers and Non-Newtonian Fluids},
  pages={33--55},
  year={1987},
  publisher={Springer}
}

@article{CCDKP2022,
   title={A general result on the approximation of local conservation laws by nonlocal conservation laws: The singular limit problem for exponential kernels},
   author={Coclite, Giuseppe Maria and Coron, Jean-Michel and De Nitti, Nicola and Keimer, Alexander and Pflug, Lukas},
   journal={Ann. Inst. H. Poincar{\'e} C Anal. Non Lin{\'e}aire},
  year={2022},
  publisher={EDP Sciences},
 }

@article{BS2020,
  title={On traffic flow with nonlocal flux: a relaxation representation},
  author={Bressan, Alberto and Shen, Wen},
  journal={Arch. Ration. Mech. Anal.},
  volume={237},
  number={3},
  pages={1213--1236},
  year={2020},
  publisher={Springer}
}

@article{FKG2018,
  title={A {G}odunov type scheme for a class of {LWR} traffic flow models with non-local flux},
  author={Friedrich, Jan and Kolb, Oliver and G{\"o}ttlich, Simone},
  journal={Netw. Heterog. Media},
  volume={13},
  number={4},
  pages={531--547},
  year={2018},
  publisher={Netw. Heterog. Media}
}

@article{DH2024,
  title={Asymptotic compatibility of a class of numerical schemes for a nonlocal traffic flow model},
  author={Huang, Kuang and Du, Qiang},
  journal={SIAM J. Numer. Anal.},
  volume={62},
  number={3},
  pages={1119--1144},
  year={2024},
  publisher={SIAM}
}

@article{NH2025,
  title={Asymptotically compatible entropy-consistent discretization for a class of nonlocal conservation laws},
  author={De Nitti, Nicola and Huang, Kuang},
  journal={arXiv preprint arXiv:2510.00221},
  year={2025}
}

\end{document}